\theoremstyle{plain}
\newtheorem{thm}{Theorem}[section]
\newtheorem{prp}[thm]{Proposition}
\newtheorem{cor}[thm]{Corollary}
\newtheorem{lm}[thm]{Lemma}
\newtheorem{rmq}[thm]{Remark}
\theoremstyle{definition}
\newtheorem{df}[thm]{Definition}
\newtheorem{ex}[thm]{Example}
\newtheorem{conj}[thm]{Conjecture}
\newcommand\N{\mathbb{N}}
\newcommand\D{\Delta}
\newcommand\DZ{D\!Z}
\title{Double centralizers in Artin-Tits groups}
\author{Oussama Ajbal \and Eddy Godelle}
\begin{document}

\maketitle

\begin{abstract} We prove an analogue of the Centralizer Theorem in the context of Artin-Tits groups. 

\end{abstract}

\section*{Introduction}
To obtain information on a group~$G$, a standard approach consists in considering subgroups and studying how they behave in the group.  In particular, one often consider the centralizer~$Z_G(H)$ of a subgroup~$H$ in~$G$, which is defined by  $$Z_G(H) = \{g\in G\mid gh = hg \textrm{ for all }  h\in H\}.$$ 
This general approach naturally extends to other contexts. This is the case in the study of noncommutative algebras where subgroups are replaced by subalgebras. Clearly, for an algebra~$R$ and a subalgebra~$H$, the centralizer~$Z_G(H)$ is also a subalgebra. In this framework, the subalgebra~$Z_G(Z_G(H))$, called the \emph{double centralizer} of~$H$, has been considered \cite{Far,Sim}.  For instance, a classical result \cite{Far} is the so-called \emph{Centralizer Theorem}, which claims that for a finite dimensional central simple algebra~$R$ over a field~$k$ and for a simple subalgebra~$H$, one has ~$Z_G(Z_G(H)) = H$. Various generalizations has been obtained leading to applications \cite{Tan,ChLe}. 

Regarding the result obtained in the algebra framework, and coming back to the group theory framework, one is naturally lead to consider the double-centralizer subgroup~$Z_G(Z_G(H))$ of a subgroup~$H$ in a group~$G$ and to address the question of a similar Centralizer Theorem.  Let us denote by~$\DZ_G(H)$ the double centralizer of~$H$. Obviously, when the group~$G$ has a center~$Z(G)$ that is not contained in the subgroup~$H$, the equality~$\DZ_G(H) = H$ can not hold. However, one may wonder whether the subgroup~$\DZ_G(H)$ is generated by~$Z(G)$ and~$H$. More precisely, if~$Z(G)\cap H$ is trivial, one may wonder whether~$\DZ_G(H) = Z(G)\times H$. When the center of~$G$ is trivial, we recover the property of the  Centralizer Theorem namely,~$\DZ_G(H) = H$.

As far as we know, the first Centralizer Theorem in the group theory framework has been obtained in \cite{GKLT} by considering the braid group on~$n$ strands and its standard parabolic subgroups. Our objective here is to address the more general case of  an Artin-Tits group~$G$ and a standard parabolic subgroup~$H$.  \emph{Artin-Tits} groups are those groups which possess a presentation associated with a Coxeter matrix. For a finite set~$S$, a Coxeter matrix on~$S$ is a symmetric matrix ~$(m_{s,t})_{s,t\in S}$  whom entries are either a positive integer or equal to~$\infty$, with~$m_{s,t} = 1$ if and only  if~$s = t$.  An Artin-Tits group associated with such a matrix is defined by the presentation \begin{equation}\label{presarttitsgrps}
\left\langle S\mid\underbrace{sts\ldots}_{m_{s,t}\ terms} = \underbrace{tst\ldots}_{m_{s,t}\ terms}~;\ \forall s,t\in S, s\not= t\ ; m_{s,t}\neq\infty \right\rangle. \end{equation}  For instance, If we consider~$S = \{s_1,\ldots, s_n\}$ with~$m_{s_i,s_j}= 3$ for~$|i-j| =1$ and~$m_{s_i,s_j} = 2$ otherwise, we obtain the classical presentation of the braid group~$B_{n+1}$ on~$n+1$ strings considered in \cite{GKLT}. A \emph{standard parabolic subgroup} is a subgroup generated by a subset~$X$ of~$S$. It turns out that such a subgroup is also an Artin-Tits groups in a natural way (see Proposition \ref{ThVDL}  below).  Artin-Tits groups are badly understood and most articles on the subject focus on particular subfamilies of Artin-Tits groups, such as Artin-Tits groups of spherical type, of FC type, of large type, or of 2-dimensional type.  Here again, we apply this strategy. We first consider the family of spherical type Artin-Tits groups, whom seminal example are braid groups.  We refer to the next sections for definitions. We prove:
\begin{thm} \label{theointro1} Assume~$A_S$ is a spherical type irreducible Artin-Tits group with~$S$ for standard generating set. Let~$X$ be strictly included in~$S$ and~$A_X$ be the standard parabolic subgroup of~$A$ generated by~$X$.  Denote by~$\Delta$ the Garside element of~$A_S$.
\begin{enumerate} 
\item If~$\D$ lies in~$\DZ_{A_S}(A_X)$ but not in~$Z(A_S)$, then $$\DZ_{A_S}(A_X)=A_X\times QZ(A_S)$$
\item If not, $$\DZ_{A_S}(A_X)=A_X\times Z(A_S).$$
\end{enumerate} 
\end{thm}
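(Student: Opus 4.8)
The plan is to compute the double centralizer in two stages: first determine the single centralizer $Z_{A_S}(A_X)$, then take its centralizer in turn. For the first stage I would rely on the known structure of centralizers of standard parabolic subgroups in spherical type Artin-Tits groups. Since $A_X$ is generated by $X$, an element $g$ centralizes $A_X$ if and only if it commutes with every generator in $X$. Writing $Y$ for the set of generators in $S\setminus X$ that commute with every element of $X$ (equivalently, $m_{s,t}=2$ for all $s\in Y$, $t\in X$), the subgroup $A_Y$ certainly lies in $Z_{A_S}(A_X)$, and so does $Z(A_S)$. The key input is that in fact $Z_{A_S}(A_X)$ is generated by $A_Y$ together with at most one extra Garside-type element: using the Garside structure on $A_S$ and on the standard parabolic $A_{X}$, an element centralizing $A_X$ must, after multiplying by a suitable power of the Garside element $\Delta_X$ of $A_X$, lie in a complement that interacts with $X$ only through the commuting generators $Y$; the quasi-center $QZ(A_S)$ of $A_S$ (generated by $\Delta$ if $\Delta\notin Z(A_S)$, i.e.\ by the element whose square is the central $\Delta^2$) enters precisely here. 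This should give $Z_{A_S}(A_X) = A_Y \times QZ(A_S)$ or $A_Y\times Z(A_S)$ according to whether $\Delta$ normalizes — hence here centralizes — $A_X$.

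For the second stage I would compute $\DZ_{A_S}(A_X) = Z_{A_S}\bigl(A_Y\times QZ(A_S)\bigr)$ (or with $Z(A_S)$ in place of $QZ(A_S)$). Since the second factor is central, or quasi-central, it contributes nothing new to the centralizer beyond forcing the centralizing element to commute with $\Delta$; so the computation reduces to identifying $Z_{A_S}(A_Y)$. Now I would apply the very same centralizer description to the parabolic $A_Y$: its centralizer is $A_{Y'}\times QZ(A_S)$ (or $\times Z(A_S)$), where $Y'$ is the set of generators commuting with all of $Y$. The crucial point is that $A_X \subseteq A_{Y'}$, since every generator of $X$ commutes with every generator of $Y$ by definition of $Y$; combined with the reverse inclusion $\DZ_{A_S}(A_X)\subseteq Z_{A_S}(A_X)$ one gets $A_X$ on the nose, provided $Y' = X$ — and here irreducibility of $A_S$ and the hypothesis $X\subsetneq S$ are what prevent $Y'$ from being strictly larger than $X$. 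Finally, the dichotomy in the statement is exactly the dichotomy of whether $\Delta$ (which always lies in $\DZ_{A_S}(A_X)$ since it normalizes every standard parabolic) belongs to $Z(A_S)$, giving the product with $Z(A_S)$, or only to $QZ(A_S)$, giving the product with $QZ(A_S)$; the direct product decomposition follows because $A_X\cap QZ(A_S)$ is trivial (as $X\neq S$).

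The main obstacle I expect is the reverse inclusion in the first stage, namely showing that $Z_{A_S}(A_X)$ is no larger than $A_Y$ times the (quasi-)center — pinning down that a generic element centralizing all of $X$ cannot involve generators of $S\setminus X$ in an essential, noncommuting way. This is where one genuinely needs the Garside-theoretic tools: normal forms, the action of the Garside element by conjugation, and a careful analysis of how the parabolic $A_X$ sits inside $A_S$ as a Garside subgroup. A secondary subtlety is the identification $Y'=X$, which requires ruling out, via irreducibility of the Coxeter graph on $S$ and $X\subsetneq S$, the possibility that some generator outside $X\cup Y$ happens to commute with all of $Y$ — this is a combinatorial statement about the defining graph that should follow from connectedness but needs to be stated and checked carefully. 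Everything else is bookkeeping with direct products and the elementary observation that $\Delta$ normalizes standard parabolics.
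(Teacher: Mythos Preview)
Your plan rests on two claims that both fail, and this is precisely why the paper takes a very different route.

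First, the formula $Z_{A_S}(A_X) = A_{X^\perp}\times QZ(A_S)$ (or $\times Z(A_S)$) is incorrect. The centralizer always contains $Z(A_X)$, which for $X\neq\emptyset$ is not contained in $A_{X^\perp}\times QZ(A_S)$; more seriously, it contains elements such as $\Delta_Y^2$ for every $Y$ with $X\subseteq Y\subseteq S$, and products $\Delta_Y\Delta_{Y'}$ for suitable intermediate $Y,Y'$ (this is the content of Paris's Theorem~\ref{thm60} in the paper). The single centralizer is genuinely complicated, and there is no shortcut of the form you propose. Your side remark that ``$\Delta$ always lies in $\DZ_{A_S}(A_X)$ since it normalizes every standard parabolic'' is also wrong: normalizing $A_X$ is not the same as centralizing $Z_{A_S}(A_X)$, and whether $\Delta\in\DZ_{A_S}(A_X)$ is exactly the dichotomy in the statement (the paper characterizes when it happens in Proposition~\ref{condtech}).

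Second, even granting your formula, the step $Y':=(X^\perp)^\perp = X$ fails in very simple cases. In type $A_3$ with $X=\{s_2\}$ one has $X^\perp=\emptyset$ and hence $(X^\perp)^\perp=S$. Irreducibility of $\Gamma_S$ does not salvage this.

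The paper therefore avoids computing $Z_{A_S}(A_X)$ altogether. Instead it takes an arbitrary $u\in\DZ_{A_S}(A_X)$, uses the normalizer decomposition $N_{A_S}(A_X)=QZ_{A_S}(A_X)\cdot A_X$ (Theorem~\ref{thm57}) and the Garside normal form to write $u=\Delta^{-2m}\cdot x\cdot b$ with $x\in A_X$ and $b$ an $X$-reduced-$X$ positive ribbon-$(X\cup X^\perp)$. The real work is the technical Proposition~\ref{prp53}: using only the commutation relations $u\,\Delta_Y^{\varepsilon(Y)}=\Delta_Y^{\varepsilon(Y)}\,u$ for all $Y\supseteq X$ (which hold because each such $\Delta_Y^{\varepsilon(Y)}$ lies in $Z_{A_S}(A_X)$), one forces $b=\Delta^n\Delta_X^{-n}$. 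That proposition in turn is proved via a delicate analysis of ribbons (Lemmas~\ref{lm514}, \ref{lm516}, \ref{lm517}): one shows that every $s\in S\setminus X$ right-divides $b$, hence $\Delta\Delta_X^{-1}$ right-divides $b$, and then an induction finishes. This ribbon machinery is the genuine content of the argument and is absent from your plan.
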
  
In the above result we do not consider the case $X = S$. Indeed, for any group~$G$ one has~$\DZ_{G}(G) = G$. In the present article, we also consider Artin-Tits groups that are not of spherical type. We conjecture that 
\begin{conj} \label{conjintro} Assume~$A_S$ is an irreducible Artin-Tits group. Let~$A_X$ be a standard parabolic subgroup of~$A_S$ generated by a subset~$X$ of $S$. 
Assume~$A_X$ is irreducible. Let $A_T$ be the smallest standard parabolic subgroup of $A_S$ that contains $Z_{A_S}(A_X)$.
\begin{enumerate} 
\item Assume~$A_X$ is not of spherical type. Then~$\DZ_{A_S}(A_X)=Z_{A_S}(A_{T})$.
\item Assume~$A_X$ is of spherical type. 
\begin{enumerate} 
\item if $A_T$ is of spherical type, then, $$\DZ_{A_S}(A_X)=\DZ_{A_T}(A_X).$$ 
\item If~$A_T$ is not of spherical, then  $$\DZ_{A_S}(A_X)= A_X.$$
\end{enumerate}
\end{enumerate} 
\end{conj}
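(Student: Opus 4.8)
The plan is to reduce, in every case, the double centralizer in $A_S$ to data attached to the single standard parabolic subgroup $A_T$, and to route the spherical situation through Theorem~\ref{theointro1}. The first step is a \emph{localisation lemma}: $Z_{A_S}(A_X)=Z_{A_T}(A_X)$. Set $C:=Z_{A_S}(A_X)$. By construction $A_T$ is the smallest standard parabolic containing $C$, so $C\subseteq A_T$ and hence $C=A_T\cap C=Z_{A_T}(A_X)$, \emph{provided} $A_X\leq A_T$, i.e.\ $X\subseteq T$. Proving $X\subseteq T$ is the first nontrivial point. When $A_X$ is of spherical type it is easy: $\Delta_X^{2}\in Z(A_X)\subseteq C$, and, $A_X$ being irreducible, the standard parabolic closure of $\Delta_X^{2}$ in $A_S$ is $A_X$ itself (parabolic closures of elements are well defined and compatible with the embedding $A_X\hookrightarrow A_S$ of Proposition~\ref{ThVDL}), so $A_X\leq A_T$. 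When $A_X$ is not of spherical type, $Z(A_X)$ may be trivial and one must instead use the ribbon description of $C=Z_{A_S}(A_X)$ to check that $C$ meets every parabolic factor of $X$; this is where case~(i) first costs something. Once $X\subseteq T$ is secured, $A_T$ is the standard parabolic closure of $C=Z_{A_T}(A_X)$, and one has $\DZ_{A_S}(A_X)=Z_{A_S}(C)$ while $\DZ_{A_T}(A_X)=Z_{A_T}(C)$.

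The second step confines $Z_{A_S}(C)$ to the normaliser of $A_T$: an element $g$ commuting with every element of $C$ normalises $\langle C\rangle$, hence normalises its parabolic closure $A_T$, so $g\in N_{A_S}(A_T)$. Using the structure of the normaliser of a standard parabolic, $N_{A_S}(A_T)=A_T\rtimes R$ with $R$ a group of ``ribbon'' automorphisms, write $g=a\,r$ with $a\in A_T$ and $r\in R$. Commutation of $g$ with $C$ then says that the automorphism of $A_T$ induced by $r$ coincides on $C$ with an inner automorphism of $A_T$; since the parabolic closure of $C$ is all of $A_T$, this is a strong rigidity constraint, and the dichotomy of the conjecture records exactly how strong it is. When $A_X$ is of spherical type, $C$ contains $\Delta_T^{2}$ if $A_T$ is spherical, resp.\ ``enough'' of $A_T$ otherwise, to force $r$ to be inessential and $a\in Z_{A_T}(C)=\DZ_{A_T}(A_X)$; this gives $\DZ_{A_S}(A_X)=\DZ_{A_T}(A_X)$ in both cases~(ii)(a) and~(ii)(b). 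In case~(ii)(a) one then evaluates $\DZ_{A_T}(A_X)$ by Theorem~\ref{theointro1} applied inside the spherical irreducible group $A_T$ (a reducible $A_T$ being handled by passing to the unique irreducible factor containing $X$, the remaining factors contributing only to the centre); case~(ii)(b) is postponed to the last step. When $A_X$ is not of spherical type, the constraint only forces $r$ to act trivially on $A_T$, i.e.\ $g\in Z_{A_S}(A_T)$, and combined with the internal identity $Z_{A_T}(C)=Z(A_T)$ this yields $\DZ_{A_S}(A_X)=Z_{A_S}(A_T)$, which is case~(i).

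I expect the obstacles to sit in two places. The first is the ribbon input of step two: one needs, in the non-spherical ambient case where Garside theory is unavailable, both the description $N_{A_S}(A_T)=A_T\rtimes R$ and the statement that an automorphism of $A_T$ which is inner on a subgroup of full parabolic closure $A_T$ is itself inner (and trivial when that subgroup is rich enough); these are standard in spherical type and known, in suitable forms, for FC type and for $2$-dimensional type, but not in general. The second --- and to my mind the heart of the matter --- is the purely internal assertion that packages both case~(ii)(b) and the identity used for case~(i): a \emph{Centralizer Theorem for a non-spherical ambient group}, namely $\DZ_{A_T}(A_X)=A_X$ when $A_T$ is irreducible non-spherical and $A_X$ a spherical standard parabolic whose centraliser has full parabolic closure $A_T$, and likewise $\DZ_{A_T}(A_X)=Z(A_T)$ when $A_X$ is itself non-spherical. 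In each case one inclusion is automatic and the reverse inclusion seems to require genuinely new input --- the $\mathrm{CAT}(0)$ geometry of the Deligne complex, or sharpened results on intersections of parabolic subgroups --- so a realistic first target is to prove the conjecture under an additional hypothesis on $A_S$ (spherical type with one extra generator, FC type, $2$-dimensional type, or large type), where all the structural facts invoked above are available.
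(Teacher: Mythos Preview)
Your localisation step contains a factual error in case~(i): when $A_X$ is irreducible and not of spherical type, property~$(\circledast)$ gives $Z_{A_S}(A_X)=A_{X^\perp}$ directly (Proposition~\ref{thmlastsect}(i)(c)), so $T=X^\perp$, and since $X\cap X^\perp=\emptyset$ your claim $X\subseteq T$ is simply false. But this same identity makes case~(i) a one--liner: $\DZ_{A_S}(A_X)=Z_{A_S}(A_{X^\perp})=Z_{A_S}(A_T)$, with no normaliser or ribbon analysis required. Your attempt to show that $C$ ``meets every parabolic factor of $X$'' was bound to fail, because it does not.

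For case~(ii) your route through $N_{A_S}(A_T)$ and the rigidity claim (``an automorphism of $A_T$ inner on a subgroup of full parabolic closure is itself inner'') is not what the paper does, and is where your plan stalls. The paper's key move is to replace $T$ by a subset $Y$ that is \emph{maximal} among standard parabolics of spherical type containing $X$. Since $\Delta_Y^2\in Z_{A_S}(A_X)$, any $g\in\DZ_{A_S}(A_X)$ commutes with $\Delta_Y^2$; property~$(\circledast)$ then gives $g\in N_{A_S}(A_Y)$, and maximality of $Y$ forces $N_{A_S}(A_Y)=A_Y$ (Proposition~\ref{thmlastsect}(i)(b): there is no spherical $Y'\supsetneq Y$, so no external elementary ribbon exists). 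Hence $\DZ_{A_S}(A_X)\subseteq A_Y$, and Theorem~\ref{theointro1} applies inside the spherical group $A_Y$. This same device dissolves what you flag as ``the heart of the matter'' in~(ii)(b): if $T$ is not spherical then $T\neq Y$, so some $h\in Z_{A_S}(A_X)$ lies outside $A_Y$; were any nontrivial power $\Delta_Y^{2k}$ to survive in $\DZ_{A_S}(A_X)$, then $h$ would commute with it and hence lie in $N_{A_S}(A_Y)=A_Y$, a contradiction. Thus $\DZ_{A_S}(A_X)\cap QZ(A_Y)=\{1\}$ and $\DZ_{A_S}(A_X)=A_X$. No ``genuinely new input'' (CAT(0) geometry, sharpened intersection results) is needed beyond property~$(\circledast)$ and Theorem~\ref{theointro1}; the trick you are missing is to trap the double centraliser in a maximal spherical envelope of $X$ rather than in $A_T$ itself.
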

The centralizer of a standard parabolic subgroup is well-understood in general.   In particular, when Conjectures~1,2, 3 of \cite{God4} hold, for any given $X$, one can read on the Coxeter graph $\Gamma_S$ whether or not  the above group $A_T$ is of spherical type. This is the case for the Artin-Tits groups considered in Theorem~\ref{theointro2}. The conjecture is supported by the following result: 
\begin{thm} \label{theointro2}
\begin{enumerate}
\item Conjecture \ref{conjintro} holds for irreducible Artin-Tits groups of FC type.
\item Conjecture \ref{conjintro} holds for Artin-Tits groups of 2-dimensional type.
\item  Conjecture \ref{conjintro} holds for Artin-Tits groups of large type. 
\end{enumerate} 
\end{thm}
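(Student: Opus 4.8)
We would deduce Theorem~\ref{theointro2} from Theorem~\ref{theointro1} together with the known description of centralizers of standard parabolic subgroups in the three families at hand. The plan is as follows. For an Artin--Tits group of FC type, of $2$-dimensional type, or of large type, Conjectures~1,2,3 of \cite{God4} are theorems; hence for an irreducible standard parabolic $A_X$ one has an explicit description of $Z_{A_S}(A_X)$: it is the internal product of the (cyclic) centre $Z(A_X)$ with a standard parabolic subgroup $A_Y$, and the parabolic closure $A_T$ of $Z_{A_S}(A_X)$ is the standard parabolic on $X\cup Y$, readable off $\Gamma_S$. I would first record this, together with the two structural inputs used repeatedly: that in each of these families the lattice of standard parabolic subgroups is well behaved --- intersections of standard parabolics are standard parabolic, parabolic closures behave as expected (Godelle for FC; Blufstein--Paris for $2$-dimensional type; Charney--Paris and Cumplido--Martin--Vaskou for large type) --- and that an irreducible Artin--Tits group in these families has nontrivial centre if and only if it is of spherical type, in which case the centre is infinite cyclic, generated by the Garside element $\Delta$ or by $\Delta^2$.

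\emph{Case $A_X$ not of spherical type.} Then $Z(A_X)=\{1\}$, so $Z_{A_S}(A_X)=A_Y$ is itself a standard parabolic with $\langle X\cup Y\rangle = T$. Since $A_Y\subseteq A_T$ we get $Z_{A_S}(A_T)\subseteq Z_{A_S}(A_Y)=\DZ_{A_S}(A_X)$ for free. For the reverse inclusion we must show that an element $g$ centralizing $A_Y$ already centralizes all of $A_T$. Here we re-apply the centralizer description to the standard parabolic $A_Y$: $g$ lies in $Z(A_Y)\cdot A_{Z}$ for a further standard parabolic, and since $Y$ still contains the non-spherical ``transversal'' block imposed by $X$, a bounded iteration of this step --- controlled by the parabolic-closure calculus --- confines $g$ to $A_T$ and in fact to $Z_{A_S}(A_T)$. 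This yields conclusion (i), and the triviality of the centres of the non-spherical blocks is exactly what removes the quasi-centre correction that appears in Theorem~\ref{theointro1}.

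\emph{Case $A_X$ of spherical type.} Now $Z_{A_S}(A_X)=\langle z_X\rangle\times A_{X^{\perp}}$ with $z_X$ a generator of $Z(A_X)$ and $A_{X^{\perp}}$ the transversal parabolic, and $A_T$ is the standard parabolic on $\operatorname{supp}(X)\cup X^{\perp}$. We first prove $\DZ_{A_S}(A_X)\subseteq A_T$: an element centralizing $Z_{A_S}(A_X)$ centralizes $A_{X^{\perp}}$, so lies in $Z_{A_S}(A_{X^{\perp}})$, and combining with $Z_{A_S}(A_X)\subseteq A_T$ the parabolic-closure calculus forces it into $A_T$. Then, because $Z_{A_T}(A_X)=Z_{A_S}(A_X)\cap A_T=Z_{A_S}(A_X)$, we obtain
\[
\DZ_{A_S}(A_X)=\DZ_{A_S}(A_X)\cap A_T=Z_{A_T}\bigl(Z_{A_T}(A_X)\bigr)=\DZ_{A_T}(A_X).
\]
If $A_T$ is of spherical type this is conclusion (ii)(a); decomposing $A_T$ into irreducible components and applying Theorem~\ref{theointro1} to the component containing $X$ then makes the right-hand side fully explicit (and the dichotomy $Z$ versus $QZ$ there is inherited). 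If $A_T$ is not of spherical type, then $A_{X^{\perp}}$ contains an irreducible non-spherical block, whose centre is trivial; feeding this block into the argument of the previous case, applied inside $Z_{A_S}(A_X)$, collapses the entire transversal part and leaves $\DZ_{A_S}(A_X)=A_X$, which is conclusion (ii)(b).

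The main obstacle I anticipate is precisely the reverse inclusions ``$\DZ_{A_S}(A_X)$ is no larger than predicted'' in the non-spherical subcases: one has to exclude elements centralizing the centralizer but lying outside the expected parabolic, that is, elements built from ribbons conjugating the transversal part. In spherical type this is settled by Garside-theoretic normal forms as in \cite{GKLT} and Theorem~\ref{theointro1}; for $2$-dimensional and large type Garside theory is unavailable, so the same information must be extracted from the action on the Deligne complex and from the resolution of the centralizer and parabolic-closure problems in those families. Verifying that the iterative step ``$g$ centralizes a non-spherical block $\Rightarrow$ $g$ descends into its parabolic closure'' is legitimate and terminates in each of the three families is the technical core of the proof.
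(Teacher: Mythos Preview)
Your proposal has a genuine gap in the spherical case. The description $Z_{A_S}(A_X)=\langle z_X\rangle\times A_{X^\perp}$ is false in general: for any spherical $Y\supseteq X$ the element $\Delta_Y^2$ lies in $Z(A_Y)\subseteq Z_{A_S}(A_X)$, yet typically $\Delta_Y^2\notin \langle z_X\rangle\times A_{X^\perp}$ (take e.g.\ $|X|=1$ and $m_{s,t}=3$ for some $t$: then $\Delta_{\{s,t\}}^2$ centralizes $s$ but involves $t\notin X^\perp$). Consequently your identification of $T$ with $X\cup X^\perp$ is also wrong, and the argument ``centralizing $A_{X^\perp}$ forces $g$ into $A_T$'' does not go through. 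The non-spherical case, by contrast, you make harder than it is: once one knows $Z_{A_S}(A_X)=A_{X^\perp}$ (which is immediate from Property~$(\circledast)$ since $QZ_{A_S}(A_X)=A_{X^\perp}$ for irreducible non-spherical $X$), the equality $\DZ_{A_S}(A_X)=Z_{A_S}(A_{X^\perp})=Z_{A_S}(A_T)$ is tautological, no iteration needed.

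The paper avoids computing $Z_{A_S}(A_X)$ explicitly. Its key device is this: for \emph{any} spherical $Y\supseteq X$ one has $\Delta_Y^2\in Z_{A_S}(A_X)$, hence every $g\in\DZ_{A_S}(A_X)$ commutes with $\Delta_Y^2$; Property~$(\circledast)$ then gives $g\in N_{A_S}(A_Y)$. Choosing $Y$ \emph{maximal} among spherical parabolics containing $X$, Property~$(\circledast)$ again yields $N_{A_S}(A_Y)=A_Y$, so $\DZ_{A_S}(A_X)\subseteq A_Y$ and one is reduced to $\DZ_{A_Y}(A_X)$ inside the spherical group $A_Y$, where Theorem~\ref{theointro1} applies. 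When $T$ is spherical, $T$ itself is this maximal $Y$ and one gets (ii)(a). When $T$ is not spherical, one still lands in some maximal spherical $Y\subsetneq T$, and the extra step is to rule out nontrivial powers $\Delta_Y^k\in\DZ_{A_S}(A_X)$: pick $h\in Z_{A_S}(A_X)\setminus A_Y$ (which exists since $T\neq Y$), then $h$ would commute with $\Delta_Y^k$, forcing $h\in N_{A_S}(A_Y)=A_Y$, a contradiction; this gives $\DZ_{A_S}(A_X)=A_X$. This ``$\Delta_Y^2$ trick'' is the missing idea in your sketch, and it replaces both your parabolic-closure iteration and your (incorrect) explicit centralizer formula.
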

The reader may note that in Theorem \ref{theointro1} there is no restriction on~$A_X$, whereas in Conjecture~\ref{conjintro} we assume that~$A_X$ is irreducible. Indeed, We can extend the above conjecture to the case where $X$ not irreducible (see Conjecture~\ref{conjintrogener}) and prove that this general conjecture holds for the same Artin-Tits groups than those considered in Theorem~\ref{theointro2}. However, the statement is more technical. This is why we postpone it and restrict to the irreducible case in the introduction.     
The remainder of this article is organized as follows. In Section 2, we introduce the necessary definitions and preliminaries. Section 3 is devoted to Artin-Tits groups of  spherical type. Finally, in Section 4, we turn to the not spherical type cases.
  
\section{Preliminaries}
In this section we introduce the useful definitions and results on Artin-Tits groups that we shall need when proving our theorems.  For all this section, we consider an Artin-Tits group~$A_S$ generated by a set~$S$ and defined by Presentation~(\ref{presarttitsgrps}) given in the introduction.
\subsection{Parabolic subgroups}

As explained, the subgroups that we consider in the article are the so-called \emph{standard parabolic subgroups}, that is those subgroups that are generated by a subset of~$S$. One of the main reasons that explains why these subgroups are considered is that they are themselves Artin-Tits groups: 

\begin{prp}\cite{Vdl}\label{ThVDL}
 Let~$X$ be a subset of~$S$. Consider the Artin-Tits group~$A_X$ associated with the Coxeter matrix~$(m_{st})_{s,t\in X}$. Then 
\begin{enumerate}
\item the canonical morphism from~$A_X$ to~$A_S$ that sends~$x$ to~$x$ is into. In particular,~$A_X$ is isomorphic to, and will be identified with, the subgroup of~$A_S$ generated by~$X$.  
\item  if~$Y$ is another subset of~$S$, then we have~$A_X\cap A_Y=A_{X\cap Y}$.
\end{enumerate} 
\end{prp}
We have already defined the notion of a centralizer~$Z_{A_S}(A_X)$ of a subgroup~$A_X$. We recall that we denote the center~$Z_{A_S}(A_S)$ of~$A_S$ by~$Z(A_S)$. More generally, for a subset~$X$ of~$S$, by~$Z(A_X)$ we denote the center of the parabolic subgroup~$A_X$. Along the way, we will also need the notions of a normalizer of a subgroup and of a quasi-centralizer of a parabolic subgroups. We recall here their definitions. 
\begin{df}
 Let~$X$ be a subset of~$S$ and $A_X$ be the associated standard parabolic subgroup.
 \begin{enumerate}
 \item The \emph{normalizer} of~$A_X$ in~$A_S$, denoted  by~$N_{A_S}(A_X)$, is the subgroup of~$A_S$ defined by 
 $$N_{A_S}(A_X) = \{g\in A_S\mid g^{-1}A_Xg = A_X\}$$
 \item The \emph{quasi-centralizer} of~$A_X$ in~$A_S$, denoted  by~$QZ_{A_S}(A_X)$, is the subgroup of~$A_S$ defined by 
 $$QZ_{A_S}(A_X) = \{g\in A_S\mid g^{-1}Xg = X\}$$
 \end{enumerate}
\end{df}  
In the sequel, we will write~$QZ(A_S)$ for~$QZ_{A_S}(A_S)$.  
There is an obvious sequence of inclusion between these subgroups: $$Z_{A_S}(A_X)\subseteq QZ_{A_S}(A_X) \subseteq N_{A_S}(A_X).$$ But we can say more: 
\begin{thm}\cite{God,God_pjm,God4}\label{thm57}
Let~$A_S$ be an Artin-Tits group, and~$X$ be a subset of~$S$. 
If~$A_S$ is of spherical type or of FC type or of 2-dimensional type, then $$N_{A_S}(A_X)=QZ_{A_S}(A_X)\cdot A_X.$$
\end{thm}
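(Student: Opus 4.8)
The inclusion $QZ_{A_S}(A_X)\cdot A_X\subseteq N_{A_S}(A_X)$ is immediate: if $u\in QZ_{A_S}(A_X)$ then $u^{-1}Xu=X$ forces $u^{-1}A_Xu=A_X$, so $u$ normalizes $A_X$; since $A_X$ also normalizes itself and $N_{A_S}(A_X)$ is a subgroup, it contains the product $QZ_{A_S}(A_X)\cdot A_X$ (which is a subgroup, the first factor normalizing the second). The content of the theorem is the reverse inclusion: every $g$ with $g^{-1}A_Xg=A_X$ factors as $g=u\,a$ with $u^{-1}Xu=X$ and $a\in A_X$; equivalently, the automorphism of $A_X$ induced by conjugation by $g$ agrees, up to an inner automorphism of $A_X$, with one permuting the standard generating set $X$.

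The common mechanism is the theory of \emph{ribbons}. For subsets $X,Y$ of $S$ one considers the minimal positive elements $r$ (when they exist) with $r^{-1}A_Xr=A_Y$. Two facts must be proved. The \emph{minimality} fact: a minimal positive element normalizing $A_X$ --- a ribbon from $X$ to itself --- actually lies in $QZ_{A_S}(A_X)$, i.e.\ conjugation by it permutes $X$ rather than merely $A_X$; this rests on controlling $Z(A_X)$ and the parabolic subgroups of $A_X$ well enough to see that the induced automorphism must respect the ``shape'' of $A_X$. The \emph{factorization} fact: every element of $N_{A_S}(A_X)$ is a product of ribbons and of elements of $A_X$. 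Granting both, one collects the ribbon part into a single element of $QZ_{A_S}(A_X)$ (ribbons from $X$ to itself being closed under product modulo $A_X$) and the theorem follows.

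In the spherical-type case~\cite{God} the factorization fact is obtained through Garside theory. Since $\Delta^{2}$ is central, $g$ equals a central element times a positive element, both normalizing $A_X$, so one may assume $g\in A_S^{+}$ and then induce on the length of $g$, peeling off from $g$ either a generator of $A_X$ on the right or a ribbon on the left; at each step Proposition~\ref{ThVDL}(ii) controls the parabolic subgroups that intervene, and the normal form of simple elements guarantees that the length drops while the normalizing property is preserved. The minimality fact uses that $Z(A_X)$ is (for $X$ irreducible) infinite cyclic, generated by $\Delta_X$ or $\Delta_X^{2}$, so that $g$ centralizes or inverts this generator, which constrains the induced automorphism enough to force it to permute $X$; the reducible case is handled componentwise. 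In the FC-type and $2$-dimensional cases~\cite{God_pjm,God4} the Garside input is unavailable and is replaced by a geometric one: $A_S$ acts on its Deligne complex, which is CAT(0) in both families; $A_X$ has nonempty convex fixed set; any $g$ normalizing $A_X$ maps that set to itself; and a case analysis of the links of the simplices it meets, run together with an induction on $|S|$ (for FC type via the description of $A_S$ as an amalgam of spherical parabolics over smaller ones), again produces the ribbon factorization.

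The hard part is precisely the passage between the local and the global: showing that the way $g$ acts on $A_X$ near its generators already forces the splitting into a generator-permuting part and an inner part, with no exotic normalizing elements surviving. This is the one point where the hypothesis on the type of $A_S$ is genuinely used --- in the spherical case through the combinatorics of simple elements and the precise description of $Z(A_X)$, and in the CAT(0) cases through convexity of fixed sets combined with an enumeration of the possible links --- and it is exactly the ingredient that is missing for general Artin-Tits groups.
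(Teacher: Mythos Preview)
This theorem is stated in the paper as a cited result from \cite{God,God_pjm,God4}; the paper gives no proof of its own, so there is nothing to compare your proposal against directly. What you have written is a plausible high-level outline of the strategies in those references rather than a proof in its own right.

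Two remarks on the outline itself. First, in the spherical case your sketch conflates two distinct ingredients: the \emph{factorization} of an arbitrary normalizer element into a ribbon part and an $A_X$-part, and the fact that a reduced-$X$ positive ribbon-$X$ already permutes $X$ setwise (this is closer to Proposition~\ref{lm58}(ii) than to an argument about $Z(A_X)$ inverting or fixing $\Delta_X$). The actual argument in \cite{God} proceeds by analyzing the Garside normal form of a positive normalizer and showing that its $X$-reduced part satisfies $bX=Xb$; the center of $A_X$ enters only to certify that $b\Delta_X^2 b^{-1}=\Delta_X^2$, which feeds into the ribbon machinery. Second, for the FC and $2$-dimensional cases your description via the CAT(0) Deligne complex is the right headline, but the substance of \cite{God_pjm,God4} is a careful reduction to the spherical case through the combinatorics of the complex (and, for FC type, the amalgam structure), not merely ``convexity of fixed sets plus a link analysis''. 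As written, your paragraph names the tools without indicating how the reduction actually goes, so it would not stand as a proof even at sketch level.

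If your intent was only to record that the result is quoted and to gesture at why it is believable, a single sentence citing \cite{God,God_pjm,God4} would be more appropriate here than an incomplete sketch.
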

This result is one of the key arguments in our proof of Theorems~\ref{theointro1} and~\ref{theointro2}. Actually, it is conjectured in~\cite{God3} that this property holds for any Artin-Tits groups.
\subsection{Families of Artin-Tits groups}
Our objective now is to introduce the various families of Artin-Tits groups that we considered in the introduction.  
\subsubsection{Irreducible Artin-Tits groups} First, we say that an Artin-Tits group is \emph{irreducible} when it is not the direct product of two of its standard parabolic subgroups. Otherwise we say that it is \emph{reducible}. Associated with the Coxeter matrix~$(m_{s,t})_{s,t\in S}$  is the Coxeter graph, which is the simple labelled graph with~$S$ as vertex set defined as it follows. There is an edge between two distinct vertices~$s$ and~$t$ when~$m_{s,t}$ is not two. The edge has label~$m_{s,t}$ when~$m_{s,t}$ is not~$3$. Therefore, the group~$A_S$ is irreducible if and only if  the Coxeter graph~$\Gamma_S$ is connected.
For instance the braid group on~$n+1$ strings is irreducible whereas the free abelian group on two generators is not. 
\subsubsection{Spherical type Artin-Tits groups} \label{secATSP}Among Artin-Tits groups, those of spherical type are the most studied and the most understood.  From Presentation~(\ref{presarttitsgrps}), we obtain the presentation of the associated Coxeter group  by adding the relations~$s^2 = 1$ for~$s$ in~$S$. The Artin-Tits group is said to be of spherical type when this associated Coxeter group is finite. For instance, braid groups are of spherical type as their associated Coxeter groups are the symmetric groups. Actually there is only a finite list of connected Coxeter graphs whom associated (irreducible) Artin-Tits groups are of spherical type (see \cite{Co},\cite{BrS}). 
\begin{center}
\begin{figure}[!h]
\begin{tikzpicture}[decoration={brace}][scale=2]
\draw[very thick,fill=black] (0,1) circle (.1cm);
\draw[very thick,fill=black] (1.3,1) circle (.1cm);
\draw[very thick,fill=black] (2.6,1) circle (.1cm);
\draw[very thick,fill=black] (3.9,1) circle (.1cm);
\draw[very thick] (0,1) -- +(3.9,0);
\draw (0.75,1.2) node {$4$};
\draw[very thick,fill=black] (5.5,1) circle (.1cm);
\draw[very thick,fill=black] (6.8,1) circle (.1cm);
\draw[very thick,fill=black] (8.1,1) circle (.1cm);
\draw[very thick,fill=black] (9.4,1) circle (.1cm);
\draw[very thick,fill=black] (10.7,1) circle (.1cm);
\draw[very thick,fill=black] (8.1,2) circle (.1cm);
\draw[very thick] (8.1,1) -- +(0,1);
\draw[very thick] (5.5,1) -- +(5.2,0);
\draw (0.75,1.2) node {$4$};
\end{tikzpicture}
\caption{Artin-Tits groups of spherical types ~$B(4)$ and~$E(6)$}
\end{figure}
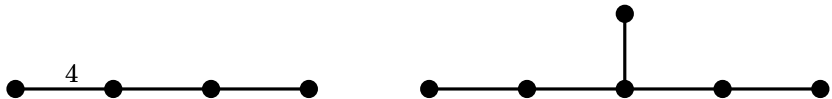
\end{center}
\subsubsection{FC type Artin-Tits groups} These Artin-Tits groups are built on those of spherical type.  An Artin-Tits group is of FC type when all its standard parabolic subgroups whom Coxeter graphs have no edge labelled with~$\infty$ are of spherical type.  In particular, all spherical type Artin-Tits groups are of FC type. Alternatively, the family of FC type Artin-Tits groups can be defined as the smallest family of groups that contains spherical type Artin-Tits groups and that is closed under amalgamation above a standard parabolic subgroup. For instance, the Artin-Tits group associated with the following Coxeter graph is of FC type. 
\begin{center}
\begin{figure}[!h]
\begin{tikzpicture}[decoration={brace}][scale=2]
\draw[very thick,fill=black] (2,1) circle (.1cm);
\draw[very thick,fill=black] (3.3,1) circle (.1cm);
\draw[very thick,fill=black] (4.6,1) circle (.1cm);
\draw[very thick,fill=black] (5.9,1) circle (.1cm);
\draw[very thick,fill=black] (2,0) circle (.1cm);
\draw[very thick,fill=black] (5.9,0) circle (.1cm);
\draw[very thick] (2,0) -- +(0,1);
\draw[very thick] (2,0) -- +(3.9,0);
\draw[very thick] (5.9,0) -- +(0,1);
\draw[very thick] (2,1) -- +(3.9,0);
\draw (2.65,1.2) node {$4$};
\draw (5.25,1.2) node {$4$};
\draw (3.95,1.2) node {$\infty$};
\end{tikzpicture}
\caption{A FC type Artin-Tits group\label{figure2}}
\end{figure}
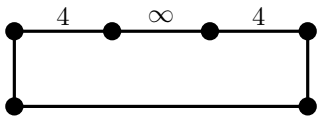
\end{center}
Indeed, the Artin-Tits group in Figure~\ref{figure2} is the amalgamation of two spherical type Artin-Tits groups of type~$B(5)$  (see \cite{Bou}) above a common standard parabolic subgroup, which is of type~$A(4)$, that is a braid group~$B_5$.  
\subsubsection{$2$-dimensional type Artin-Tits groups}  An Artin-Tits group is of 2-dimensional type when no standard parabolic subgroup generated by three, or more, generators is of spherical type. These groups has been considered, for instance, in \cite{Cha2,Che,God4}.  
\begin{center}
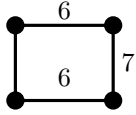
\begin{figure}[!h]
\begin{tikzpicture}[decoration={brace}]
\draw[very thick,fill=black] (3.5,1) circle (.1cm);
\draw[very thick,fill=black] (3.5,0) circle (.1cm);
\draw[very thick,fill=black] (4.8,0) circle (.1cm);
\draw[very thick,fill=black] (4.8,1) circle (.1cm);
\draw[very thick] (3.5,0) -- +(1.3,0);
\draw[very thick] (3.5,1) -- +(1.3,0);
\draw[very thick] (3.5,0) -- +(0,1);
\draw[very thick] (4.8,0) -- +(0,1);
\draw (4.15,1.2) node {$6$};
\draw (5,.5) node {$7$};
\draw (4.15,.3) node {$6$};
\end{tikzpicture}
\caption{A 2-dimensional Artin-Tits group \label{AT2D}}
\end{figure}
\end{center}
\subsubsection{Large type Artin-Tits groups}  Contained in the family of~$2$-dimensional Artin-Tits groups is the family of  Artin-Tits groups of large type. An Artin-Tits group is of large type when no ~$m_{s,t}$ is equal to~$2$.  Some~$2$-dimensional Artin-Tits groups are not of large type (see Figure~\ref{AT2D}). 
\begin{center}
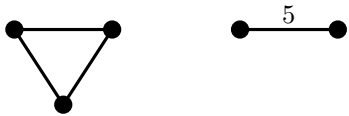
\begin{figure}[!h]
\begin{tikzpicture}[decoration={brace}][scale=2]
\draw[very thick,fill=black] (2,1) circle (.1cm);
\draw[very thick,fill=black] (3.3,1) circle (.1cm);
\draw[very thick,fill=black] (2.65,0) circle (.1cm);
\draw[very thick] (2,1) -- +(1.3,0);
\draw[very thick] (2,1) -- +(0.65,-1);
\draw[very thick] (3.3,1) -- +(-0.65,-1);
\draw[very thick,fill=black] (5,1) circle (.1cm);
\draw[very thick,fill=black] (6.3,1) circle (.1cm);
\draw[very thick] (5,1) -- +(1.3,0);
\draw (5.65,1.2) node {$5$};
\end{tikzpicture}
\caption{Artin-Tits groups of large types~$\tilde{A}(2)$ and~$I(5)$. \label{ATLT}}
\end{figure}
\end{center}
\subsection{Artin-Tits monoids}
As explained above, one of the main ingredients  in our proof is Theorem \ref{thm57}. Another one is the positive monoid of an Artin-Tits monoid that allows to apply Garside theory. Here, we introduce only the results that we will need and refer to \cite{DDGKM} for more details on this theory. 
We recall that we fix an Artin-Tits group~$A_S$ generated by a set~$S$ and defined by Presentation~(\ref{presarttitsgrps}). 
\begin{df} The Artin-Tits monoid~$A_S^{+}$ associated with~$A_S$ is the submonoid of~$A_S$ generated by~$S$. An element of~$A_S$ that belongs to~$A_S^+$ is called a positive element. Its inverse is called a negative element.
\end{df}
We gather in the following proposition several properties of Artin-Tits monoids that we will need in the sequel. 
\begin{prp}\label{reststdAT}
\begin{enumerate}
\item \cite{Par4}  Considered as a presentation of monoid, Presentation~(\ref{presarttitsgrps}) is a presentation of the monoid~$A_S^+$. 
\item When~$A_S$ is of spherical type, then
\begin{enumerate}
\item \cite{BrS,Cha1,DDGKM}  the monoid~$A^+_S$ is a Garside monoid. In particular, every element~$g$  in~$A_S$ can be decomposed in a unique way as~$g=a^{-1}b$, with~$a,b$ positive, so that~$a$ and~$b$ have no nontrivial common left-divisors in~$A_S^+$. Furthermore, if~$c\in A_S^+$ is such that~$cg\in A_S^+$, then~$a$ right-divides~$c$ in~$A^+_S$.
\item \cite{BrS,Del} There is a positive element~$\Delta$ that belongs to~$QZ(A_S)$ so that every element~$g$  in~$A_S$, can be decomposed as ~$g=a\Delta^{-n}$ with~$a$ positive and~$n\geq 0$.  Moreover,~$\Delta^2$ belongs to~$Z(A_S)$.
\item \cite{BrS,Del} When, moreover,~$A_S$ is irreducible then ~$QZ(A_S)$ is an infinite cyclic group generated by~$\Delta$. The group~$Z(A_S)$ is infinite cyclic generated by~$\Delta$ or by~$\Delta^2$.  
\end{enumerate}
\end{enumerate}
\end{prp}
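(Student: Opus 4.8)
The plan is to treat Proposition~\ref{reststdAT} as an assembly of classical structural facts about Artin-Tits monoids, and for each item to pin down the precise reduction and cite the source rather than reprove everything from scratch. So I would organise the argument item by item, stating first what has to be checked and then where the verification lives.

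For item~(i), the content is that the submonoid of~$A_S$ generated by~$S$ carries Presentation~(\ref{presarttitsgrps}) as a monoid presentation; since the natural monoid morphism from the abstract monoid~$M$ defined by that presentation onto~$A_S^+$ is tautologically surjective, what must be shown is that it is \emph{injective}, i.e.\ that~$M$ embeds in the group~$A_S$. First I would make this reduction explicit, and then invoke Paris's embedding theorem~\cite{Par4}. If one wanted a more hands-on route one could try to prove that~$M$ is left- and right-cancellative and that elements with a common multiple admit an lcm (the Brieskorn--Saito exchange argument by induction on word length, using only the braid relations), and then compare a suitable Ore localisation of~$M$ with~$A_S$; but the cancellation/lcm input is itself the delicate point in the non-spherical case, so I would not pursue this and would simply cite~\cite{Par4}.

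For item~(ii) one works under the hypothesis that~$W_S$ is finite, and I would recall the Brieskorn--Saito/Deligne/Charney package. The element~$\Delta$ is the positive lift of the longest element~$w_0\in W_S$, characterised as the left (equivalently right) lcm of~$S$ in~$A_S^+$; its left-divisors, the simple elements, are in bijection with~$W_S$, are finite in number, form a lattice, and make~$\Delta$ a Garside element, which gives~(ii)(a). Since~$A_S^+$ is cancellative with common multiples, it satisfies the Ore condition, so~$A_S$ is its group of fractions; the coprime decomposition~$g=a^{-1}b$ is obtained by writing~$g$ as a left fraction and cancelling the left gcd of numerator and denominator, and the ``furthermore'' clause is precisely the statement that~$a$ is the \emph{minimal} left denominator of~$g$, which follows from the lattice structure by a short gcd computation on the relation~$cg\in A_S^+$. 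For~(ii)(b): conjugation by~$\Delta$ stabilises~$S$, because it induces on~$W_S$ the map~$s\mapsto w_0 s w_0$, which permutes the Coxeter generators, and this permutation lifts to~$A_S$ by uniqueness of lifts of simple elements; hence~$\Delta\in QZ(A_S)$, and~$\Delta^2$ induces the trivial permutation, so it centralises each~$s\in S$ and lies in~$Z(A_S)$. The decomposition~$g=a\Delta^{-n}$ comes from the fact that for~$n$ large enough the denominator~$a$ left-divides~$\Delta^{n}$, so~$\Delta^{n}g\in A_S^+$, whence~$g=(\Delta^{-n}(\Delta^{n}g)\Delta^{n})\Delta^{-n}$ with the inner factor positive because~$\Delta^{n}$ normalises~$A_S^+$. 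Finally~(ii)(c) uses irreducibility: $QZ(A_S)$ is exactly the set of elements whose conjugation permutes~$S$, and for an irreducible finite-type Coxeter graph the only such permutations realised inside~$A_S$ are the powers of the~$w_0$-permutation, so~$QZ(A_S)=\langle\Delta\rangle$ is infinite cyclic; since~$Z(A_S)\subseteq QZ(A_S)$ and~$\Delta^{2}\in Z(A_S)$, the center equals~$\langle\Delta\rangle$ when~$w_0$ is central in~$W_S$ and~$\langle\Delta^{2}\rangle$ otherwise.

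The main obstacle is item~(i): Paris's embedding theorem is genuinely deep and has no Garside input available in general, so there is no short self-contained proof and the honest move is to cite~\cite{Par4}. A secondary subtlety is the dichotomy closing~(ii)(c): deciding for which irreducible spherical types one has~$Z(A_S)=\langle\Delta\rangle$ rather than~$\langle\Delta^{2}\rangle$ amounts to knowing when~$w_0$ acts as~$-1$ on the reflection representation, which rests on the classification of finite Coxeter groups (it is~$\langle\Delta\rangle$ exactly in types~$A_1$, $B_n$, $D_{2n}$, $E_7$, $E_8$, $F_4$, $G_2$, $H_3$, $H_4$ and~$I_2(2k)$); I would quote this rather than rederive it.
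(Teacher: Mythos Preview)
Your proposal is consistent with the paper's treatment: Proposition~\ref{reststdAT} is stated there as a compilation of classical results with citations to \cite{Par4,BrS,Cha1,DDGKM,Del} and is given \emph{no proof at all} in the paper. Your item-by-item reduction plus citation is therefore exactly the right approach, and your sketches go well beyond what the paper itself supplies.

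Two minor remarks on the sketches, neither fatal since you are citing anyway. In~(ii)(b), your phrase ``the denominator~$a$ left-divides~$\Delta^{n}$'' should really be that~$a$ \emph{right}-divides~$\Delta^{n}$ (so that~$\Delta^{n}a^{-1}\in A_S^+$); of course in a Garside monoid with~$\Delta$ balanced these are equivalent, but it is worth saying the correct side. In~(ii)(c), the sentence ``the only such permutations realised inside~$A_S$ are the powers of the~$w_0$-permutation'' is not quite the argument: even if the induced permutation of~$S$ is trivial, the element need not be a power of~$\Delta$; the actual proof passes through the decomposition~$g=a\Delta^{-n}$, reduces to positive~$g$, and then uses irreducibility plus a length/support argument to force a positive quasi-central element to be a power of~$\Delta$. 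Again this is exactly what \cite{BrS,Del} establish, so the citation covers it.
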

The decomposition~$g=a^{-1}b$ in Point~(ii)(a) is called the {\em Charney's (left) orthogonal splitting} of~$g$. The {\em Charney's right orthogonal splitting}~$g=ab^{-1}$ is defined in a similar way. 

In the sequel, we denote by~$\tau:S\to S$  the permutation of~$S$ defined by~$\D s=\tau(s)\D$ for all~$s$ in~$S$. As explained above,~$\tau$ is either the identity or an involution. In particular, we have also~$s\D=\D\tau(s)$ for all~$s$ in~$S$. Moreover, for~$a,b$ in~$A_S^+$, we write~$a\preceq b$ if~$a$ left-divides~$b$ in~$A_S^+$, that is if there exists~$c$ in~$A_S^+$ so that~$b = ac$. Similarly, we write~$b\succeq a$ if~$a$ right-divides~$b$ in~$A_S^+$.

\section{Spherical type Artin-Tits groups}

In this Section we focus on spherical type Artin-Tits groups and prove Theorem~\ref{theointro1}.
\subsection{Artin-Tits groups of type~$E(6)$ and~$D(2k+1)$}
In Theorems~\ref{theointro1} the description of~$\DZ_{A_S}(A_T)$ depends on a technical condition. 
Here we investigate this condition and characterize irreducible Coxeter graphs for which this condition is satisfied.  
\begin{prp}\label{condtech} Assume~$A_S$ is an irreducible spherical type Artin-Tits group.  Let~$X$ be a proper subset of~$S$. Then, ~$\D$ does not belong to~$Z(A_S)$ but lies in~$\DZ_{A_S}(A_X)$ if and if :
\begin{enumerate}
\item[(a)] either~$\Gamma_S$ is of type~$D(2k+1)$ and~$X\supseteq\{s_2,s_{2'},s_3\}$ (see  Figure~\ref{figuretypeD}).
\item[(b)] or~$\Gamma_S$ is of type~$E_6$ and~$X=\{s_2,\dots,s_6\}$ (see  Figure~\ref{figuretypeE6}).
\end{enumerate}
\begin{center}
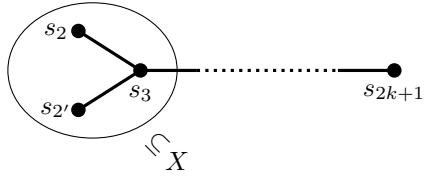
\begin{figure}[!h]\label{figuretypeD}
\begin{tikzpicture}[scale=0.75]
\draw[very thick,fill=black] (3.5,-3.5) circle (.1cm);
\draw[very thick,fill=black] (8,-3.5) circle (.1cm);
\draw[very thick,fill=black] (2.4,-2.8) circle (.1cm);
\draw[very thick,fill=black] (2.4,-4.2) circle (.1cm);
\draw[very thick] (3.5,-3.5) -- +(1,0);
\draw[very thick] (2.4,-2.8) -- +(1.1,-.7);
\draw[very thick] (2.4,-4.2) -- +(1.1,.7);
\draw[dotted,very thick] (4.5,-3.5) -- +(2.5,0);
\draw[very thick] (7,-3.5) -- +(1,0);
\draw (2,-2.8) node {$s_2$};
\draw (2,-4.2) node {$s_{2'}$};
\draw (3.5,-3.9) node {$s_3$};
\draw (8,-3.9) node {$s_{2k+1}$};
\draw (4.15,-5.1) node {$X$};
\node at (3.8,-4.8) [rotate=315] {$\subseteq$};
\draw (2.65,-3.5) ellipse (1.5cm and 1.2cm);
\end{tikzpicture}
\caption{$\Gamma_S$ of type~$D(2k+1)$ and~$X\supseteq\{s_2,s_{2'},s_3\}$ \label{figuretypeD}} 
\end{figure}
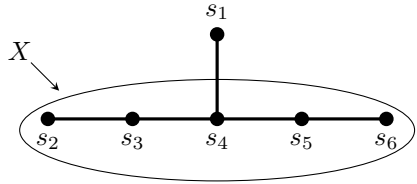
\begin{figure}[!h]
\begin{tikzpicture}[scale=0.75]
\draw[very thick,fill=black] (2,-6.7) circle (.1cm);
\draw[very thick,fill=black] (3.5,-6.7) circle (.1cm);
\draw[very thick,fill=black] (5,-6.7) circle (.1cm);
\draw[very thick,fill=black] (5,-5.2) circle (.1cm);
\draw[very thick,fill=black] (6.5,-6.7) circle (.1cm);
\draw[very thick,fill=black] (8,-6.7) circle (.1cm);
\draw[very thick] (2,-6.7) -- +(6,0);
\draw[very thick] (5,-5.2) -- +(0,-1.5);
\draw (2,-7.1) node {$s_2$};
\draw (3.5,-7.1) node {$s_3$};
\draw (5,-7.1) node {$s_4$};
\draw (6.5,-7.1) node {$s_5$};
\draw (8,-7.1) node {$s_6$};
\draw (5,-4.8) node {$s_1$};
\draw (1.5,-5.5) node {$X$};
\draw (5,-6.9) ellipse (3.5cm and .9cm);
\draw[-stealth] (1.7,-5.7) -- +(.5,-.5);
\end{tikzpicture}
\caption{$\Gamma_S$ of type~$E_6$ and~$X=\{s_2,\dots,s_6\}$ \label{figuretypeE6}} 
\end{figure}
\end{center}
\end{prp}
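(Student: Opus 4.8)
The plan is to analyze the condition "$\Delta \in \DZ_{A_S}(A_X) \setminus Z(A_S)$" by translating it into combinatorial data on the Coxeter graph $\Gamma_S$, using the structure theory recalled in the Preliminaries. First I would recall from Proposition~\ref{reststdAT}(ii)(c) that, since $A_S$ is irreducible of spherical type, $QZ(A_S) = \langle \Delta \rangle$ is infinite cyclic and $Z(A_S)$ equals $\langle \Delta \rangle$ or $\langle \Delta^2 \rangle$ according to whether $\tau$ is trivial or not. So $\Delta \notin Z(A_S)$ exactly when $\tau \neq \mathrm{id}$, and the list of irreducible spherical types with nontrivial $\tau$ is the classical short list: $A(n)$ for $n\ge 2$, $D(n)$ for $n$ odd, $E_6$, and $I_2(m)$ for $m$ odd. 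I would fix this list as the starting point, since outside it the condition fails trivially.

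Next I would translate "$\Delta \in \DZ_{A_S}(A_X)$", i.e. "$\Delta$ centralizes $Z_{A_S}(A_X)$". Since $\Delta \in QZ(A_S)$, conjugation by $\Delta$ induces the graph automorphism $\tau$ on $S$, hence an automorphism of $A_S$ permuting standard parabolics; in particular $\Delta$ normalizes $A_X$ iff $\tau(X) = X$, and more precisely $\Delta^{-1} g \Delta = \tau(g)$ for $g$ expressed in the generators. The key reduction is: $\Delta$ centralizes $Z_{A_S}(A_X)$ iff $\tau$ fixes $Z_{A_S}(A_X)$ pointwise. Now I would invoke Theorem~\ref{thm57}: $N_{A_S}(A_X) = QZ_{A_S}(A_X)\cdot A_X$, together with the known description of $QZ_{A_S}(A_X)$ and of $Z_{A_S}(A_X)$ for spherical type — the centralizer of a standard parabolic is generated by $Z(A_X)$ together with the quasi-centralizer part coming from the "ribbon"/commuting complement, and for irreducible spherical $A_X$ one has $Z(A_X) = \langle \Delta_X \rangle$ or $\langle \Delta_X^2\rangle$. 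So the question "does $\tau$ fix $Z_{A_S}(A_X)$ pointwise" becomes a finite check: it requires (1) $\tau(X) = X$ so that $\Delta_X$ is $\tau$-stable up to the appropriate power, (2) $\tau$ acts trivially on the generator of $Z(A_X)$, and (3) $\tau$ fixes the remaining commuting generators of $Z_{A_S}(A_X)$; and crucially, for $\Delta \notin Z(A_S)$ we need $\tau \neq \mathrm{id}$ globally while $\tau|_X$ behaves well — which forces the nontrivial part of $\tau$ to live inside $X$.

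Then comes the case-by-case verification over the short list. For type $A(n)$, $\tau$ is the unique nontrivial diagram flip; one checks that no proper $\tau$-stable $X$ has $\tau$ fixing $Z(A_X)$ pointwise in the required way (the flip always moves $\Delta_X$ nontrivially, or $A_X$ reducible kills centrality), so type $A$ contributes nothing — matching the statement. Similarly $I_2(m)$, $m$ odd, has only $X = \emptyset$ or $X$ a single vertex as proper subsets, for which $Z_{A_S}(A_X)$ is all of $A_S$ or easily seen not to be $\tau$-fixed, so it contributes nothing. The two surviving families are $D(2k+1)$ and $E_6$: here $\tau$ swaps the two "fork" vertices $s_2, s_{2'}$ (resp. the two ends $s_2 \leftrightarrow s_6$, $s_3 \leftrightarrow s_5$ of $E_6$) and fixes everything else. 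One checks that $Z_{A_S}(A_X)$ is $\tau$-fixed pointwise precisely when $X$ contains enough of the diagram that the $\tau$-moved vertices, together with the branch point, all lie inside $X$ and $A_X$ becomes large enough that $Z_{A_S}(A_X)$ has no $\tau$-moved generators — giving exactly $X \supseteq \{s_2, s_{2'}, s_3\}$ in type $D(2k+1)$ and $X = \{s_2,\dots,s_6\}$ in type $E_6$. (In $E_6$ one must also rule out larger or differently-placed $X$: any $\tau$-stable proper $X$ other than $\{s_2,\dots,s_6\}$ either omits a $\tau$-moved vertex — then $\tau$ fails to fix $Z(A_X)$ or $A_X$ — or is too small.)

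The main obstacle I anticipate is step three in the reduction, namely pinning down $Z_{A_S}(A_X)$ precisely enough — specifically whether the "extra" generators of $Z_{A_S}(A_X)$ beyond $Z(A_X)$ (the part measured by $QZ_{A_S}(A_X)/Z(A_X)$ via Theorem~\ref{thm57}) are $\tau$-fixed. This requires combining the explicit centralizer computations of \cite{God,God_pjm,God4} with the explicit form of $\tau$ on each diagram, and the bookkeeping in type $D(2k+1)$ (where $X$ ranges over a whole family, not a single subset) is where the argument is least automatic; the forward direction "if (a) or (b) then $\Delta \in \DZ_{A_S}(A_X)\setminus Z(A_S)$" is the easy half, and the converse "only these" is where the case analysis has to be exhaustive.
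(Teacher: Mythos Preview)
Your overall strategy is correct and reaches the same case analysis as the paper: reduce to the four types with $\tau\neq\mathrm{id}$, then show that the condition forces all $\tau$-moved vertices of $S$ to lie in $X$ and that $\tau$ must preserve each irreducible component of $X$, which eliminates $A(k)$ and $I_2(2p+1)$ and pins down the subsets in $D(2k+1)$ and $E_6$.

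Where you diverge from the paper is in how you obtain the key constraint ``the nontrivial part of $\tau$ lives inside $X$''. You plan to extract this from an explicit description of $Z_{A_S}(A_X)$ via Theorem~\ref{thm57} and the centralizer results of \cite{God,God_pjm,God4,Par2}, and you correctly flag this as the main obstacle. The paper avoids this computation entirely. Its Lemma~\ref{lm518} observes that for \emph{any} $Y\supseteq X$ the element $\Delta_Y^2$ lies in $Z(A_Y)\subseteq Z_{A_S}(A_X)$, so $\Delta$ must commute with it, forcing $\tau(Y)=Y$; applying this with $Y=X$ and with $Y=X\cup\{s\}$ for each $s\in S\setminus X$ immediately gives $\tau(s)=s$ for all $s\notin X$. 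Similarly, testing against $\Delta_{X_1}^2$ for each irreducible component $X_1$ of $X$ gives $\tau(X_1)=X_1$. These three lines replace your entire centralizer computation and make the case analysis almost automatic: in type $A(k)$ one is forced to $S\setminus X=\{s_{(k+1)/2}\}$, whose two components are swapped by $\tau$, contradiction; in $D(2k+1)$ one gets $s_2,s_{2'}\in X$, and then the extra observation that $s_2\notin Z_{A_S}(A_X)$ (since $\tau(s_2)\neq s_2$) forces $s_3\in X$; in $E_6$ the four $\tau$-moved vertices must lie in $X$, and component-stability forces $s_4\in X$ as well.

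So your route works but is heavier than necessary; the paper's trick of testing $\Delta$ only against the obvious central elements $\Delta_Y^2$ sidesteps exactly the bookkeeping you were worried about.
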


When proving Proposition~\ref{condtech}, we will need the following lemma.

\begin{lm}\label{lm518}
Assume~$A_S$ is an irreducible spherical type Artin-Tits group.  Let~$X$ be a proper subset of~$S$.  Assume the permutation~$\tau$ is not the identity on~$S$ and~$\D$ lies in~$\DZ_{A_S}(A_X)$ then:
\begin{enumerate}
\item~$\tau$ is the identity on~$S\setminus X$, that is~$\D$ lies in~$Z_{A_S}(A_{S\setminus X})$.
\item~$\tau$ is not the identity on~$X$, that is~$\D$ does not lie in~$Z_{A_S}(A_X)$.
\item~$\D$ stabilizes the indecomposable components of~$X$.

\end{enumerate}
\end{lm}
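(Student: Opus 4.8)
The plan is to exploit the fact that $\D$ lies in $\DZ_{A_S}(A_X)$, which by definition means $\D$ commutes with every element of $Z_{A_S}(A_X)$. First I would recall from Proposition~\ref{reststdAT} that $\D \in QZ(A_S)$, so conjugation by $\D$ realizes the permutation $\tau$ of $S$, and $\D$-conjugation preserves $A_S^+$ and the divisibility relations. The hypothesis that $\tau$ is not the identity on $S$ gives a nontrivial involution; the point of the lemma is to locate where the ``action'' of $\tau$ can be nontrivial relative to $X$.

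For part (i), the strategy is to find elements of $Z_{A_S}(A_X)$ coming from the complement $S\setminus X$. Since each generator $s\in S\setminus X$ that is not joined by an edge to any vertex of $X$ commutes with all of $X$, such $s$ lies in $Z_{A_S}(A_X)$; more generally one uses the description of the centralizer of a standard parabolic subgroup via Theorem~\ref{thm57}, together with the structure of $Z_{A_S}(A_X)$, to get enough of $A_{S\setminus X}$ (or at least enough positive elements supported on $S\setminus X$) inside $Z_{A_S}(A_X)$. Then $\D$ must commute with these, and since $\D$-conjugation sends a generator $s$ to $\tau(s)$, commuting with such an element forces $\tau$ to fix the relevant generators; pushing this through all of $S\setminus X$ gives $\tau|_{S\setminus X} = \mathrm{id}$, i.e. $\D \in Z_{A_S}(A_{S\setminus X})$. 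Part (ii) is then almost immediate by contradiction: if $\tau$ were also the identity on $X$, then $\tau$ would be the identity on all of $S$, contrary to hypothesis; so $\tau$ is nontrivial on $X$ and hence $\D\notin Z_{A_S}(A_X)$.

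For part (iii), I would argue that $\D$ must permute the indecomposable (connected) components of the Coxeter graph $\Gamma_X$ of $X$, since $\D$-conjugation is a graph automorphism when restricted appropriately (it respects the relations $m_{s,t}$, as $\D\in QZ(A_S)$ acts by the graph symmetry $\tau$). The content is that it cannot swap two distinct components: if $\tau$ exchanged components $X_1$ and $X_2$ of $X$, I would produce an element of $Z_{A_S}(A_X)$ — for instance the Garside element $\Delta_{X_1}$ of the spherical-type parabolic $A_{X_1}$, or an appropriate central element of a component — that $\D$ fails to commute with, because conjugation by $\D$ would move it into $A_{X_2}$; this contradicts $\D\in\DZ_{A_S}(A_X)$. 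Hence each component is stabilized.

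The main obstacle I expect is part (i): identifying precisely which elements of $Z_{A_S}(A_X)$ to test $\D$ against, and making sure they genuinely lie in the centralizer and not merely the normalizer. This is where Theorem~\ref{thm57} (giving $N_{A_S}(A_X) = QZ_{A_S}(A_X)\cdot A_X$) does the heavy lifting: one combines it with the known generators of $QZ_{A_S}(A_X)$ and the fact that $\D$ centralizes all of $Z_{A_S}(A_X)$ to pin down $\tau$ on $S\setminus X$. A secondary delicate point is handling generators of $S\setminus X$ that are adjacent to $X$ in $\Gamma_S$, where commutation with $A_X$ is not automatic and one must use that $A_X$ is spherical type together with Garside-theoretic normal form arguments (Charney's orthogonal splitting) to control the relevant centralizing elements.
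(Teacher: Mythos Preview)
Your handling of (ii) matches the paper. For (iii), your idea is right in spirit, but be careful: $\Delta_{X_1}$ need not lie in $Z_{A_S}(A_X)$, since it is only in $QZ(A_{X_1})$ and not in $Z(A_{X_1})$ in general. The paper uses $\Delta_{X_1}^2$, which is always central in $A_{X_1}$ (hence in $A_X$, hence in $Z_{A_S}(A_X)$), and then the relation $\D \Delta_{X_1}^2 = \Delta_{\tau(X_1)}^2 \D$ forces $\tau(X_1)=X_1$.

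The real gap is in (i), and you have correctly identified where: generators $s\in\partial X$. Your plan is to find elements of $Z_{A_S}(A_X)$ supported on $S\setminus X$ whose support contains $s$, and to invoke Theorem~\ref{thm57} and Charney splittings for this. This will not work. In general there are \emph{no} such elements: for instance in type $A_3$ with $X=\{s_1\}$, the only elements of $A_{\{s_2,s_3\}}$ centralizing $s_1$ are those in $\langle s_3\rangle$, so you cannot detect $s_2$ this way. Theorem~\ref{thm57} describes the normalizer, not the centralizer, and the quasi-centralizer generators it produces typically have support meeting $X$.

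The paper's argument bypasses this entirely with a short trick you are missing. For $s\in S\setminus X$ set $Y=X\cup\{s\}$; since $A_S$ is spherical, $\Delta_Y^2\in Z(A_Y)\subseteq Z_{A_S}(A_X)$, and likewise $\Delta_X^2\in Z_{A_S}(A_X)$. As $\D\in\DZ_{A_S}(A_X)$, $\D$ commutes with both, and from $\D\Delta_X^2=\Delta_{\tau(X)}^2\D$ and $\D\Delta_Y^2=\Delta_{\tau(Y)}^2\D$ one gets $\tau(X)=X$ and $\tau(Y)=Y$, hence $\tau(s)=s$. No ribbons, no normal forms, no Theorem~\ref{thm57}. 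The moral is: rather than looking for centralizer elements supported away from $X$, enlarge $X$ by $s$ and use the Garside element of the bigger parabolic.
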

\begin{proof}
(i) Let~$s\in S\setminus X$. Set~$Y=X\cup\{s\}$. The elements~$\D_X^2,\D_Y^2$ lie in~$Z(A_X)$ and~$Z(A_Y)$, respectively. So, they both belong to~$Z_{A_S}(A_X)$ and, therefore, commute with~$\D$. Since ~$\D\D_X=\D_{\tau(X)}\D$ and~$\D\D_Y=\D_{\tau(Y)}\D$, we deduce that~$\tau(X) = X$ and~$\tau(Y) = Y$. Using that~$Y=X\cup\{s\}$, we concluded that ~$\D s=s\D$. Thus, ~$\D$ lies in~$Z_{A_S}(A_{S\setminus X})$ and (i) holds. Since ~$\tau$ is not the identity on~$S$, (i) implies (ii).  Finally,  Let~$X_1$ be an indecomposable component of~$X$. We have~$\D\D_{X_1}=\D_{\tau(X_1)}\D$. Then,~$\D_{X_1}^2$ lies in~$Z(A_X)$ and, therefore, in~$Z_{A_S}(A_X)$. Hence~$\D\D_{X_1}^2=\D_{X_1}^2\D$ and~$X_1 = \tau(X_1)$, that is~$\D X_1=X_1\D$.
\end{proof}

\begin{proof}[Proof of Proposition \ref{condtech}]
Assume the element~$\D$ does not belong to the center~$Z(A_S)$ but lies in~$\DZ_{A_S}(A_X)$. So, assertions (i)(ii) and (iii) in Lemma~\ref{lm518} hold. In particular, the permutation~$\tau$ is not the identity map on~$S$. Using the classification of irreducible Artin-Tits groups~\cite{Bou} and well-known results on~$\D$ \cite{BrS,Del}, we deduce that the type of~$\Gamma_S$ is one of the following types: 
\begin{enumerate}
\item[$\bullet$]~$A(k)$ with~$k\geq 2$, 
\item[$\bullet$]~$D(2k+1)$ with~$k\geq 1$,  
\item[$\bullet$]~$E_6$, or  
\item[$\bullet$]~$I_2(2p+1)$ with ~$p\geq 1$.
\end{enumerate}

By Lemma \ref{lm518}(i), the permutation~$\tau$ fixes each element of~$S\setminus X$. This imposes
 that~$\Gamma_S$ cannot be of type~$I_2(2p+1)$, as~$X$ is proper in~$S$ and~$\Delta$ permutes the two elements of~$S$.  If~$\Gamma_S$ is of type~$A(k)$ with~$k\geq 2$, (so~$A_S$ is  the braid group~$B_{k+1}$) then the unique element of~$S$ fixed by~$\tau$ is~$s_{\frac{k+1}{2}}$. This imposes~$S\setminus X=\{s_{\frac{k+1}{2}}\}$.  and~$\D$ does not fix the two indecomposable components {$\{s_1,\dots,s_{\frac{k-1}{2}}\}$ and~$\{s_{\frac{k+3}{2}},\dots,s_k\}$} of~$X$, a contradiction with Lemma \ref{lm518}(iii). So~$\Gamma_S$ is not of type~$A(k)$. If~$\Gamma_S$ is of type~$D(2k+1)$, then~$\tau$ switches~$s_2$ and~$s_{2'}$. Therefore,~$s_2$ and $s_{2'}$ have to lie in~$X$. Moreover,~$s_2$ does not commute with~$\Delta$, so it cannot belong to~$Z_{A_S}(A_X)$. This imposes that~$s_3$ has to belong to~$X$. Hence, ~$\{s_2,s_{2'},s_3\}$ is included in~$X$ and we have case (a) of the proposition. Similarly, if~$\Gamma_S$ is of type~$E_6$.  The elements~$s_2,s_3,s_5,s_6$ are not fix by~$\tau$, so they have to belong to~$X$. Applying Lemma \ref{lm518}(iii), we deduce that~$s_4$ has to lie in~$X$ too. Since~$X$ is not~$S$, it is equal to~$\{s_2,\dots,s_6\}$ and we have case (b) of the proposition. Conversely,  in case (a) and (b), one can verify that~$\D$ does not belong to~$Z(A_S)$ but lies on~$\DZ_{A_S}(A_X)$
\end{proof}
\subsection{Ribbons}\label{sectribbon}
  The notion of ribbon introduced in \cite{FRZ} for the case of braid groups, and then generalized in \cite{Par1,Got}, will be crucial to us in order to calculate the double-centralizer of a parabolic subgroup.  Here we recall its definition and gather some properties that we shall need. In particular, we only consider spherical type Artin-Tits groups. We refer to above references and to \cite{DDGKM} for more details. 
 Given an Artin-Tits presentation~(\ref{presarttitsgrps}), let us first introduce two notations: for  a subset~$X$ of~$S$, we set $$X^\bot=\{s\in S\setminus X\mid\forall t\in X,m_{ts}=2\}$$ and $$\partial X=\{s\in S\setminus X\mid\exists t\in X,m_{ts}>2\}.$$
\begin{center}
\begin{figure}[!h]
\begin{tikzpicture}[scale=0.75]
\draw[very thick,fill=black] (2,-13.1) circle (.1cm);
\draw[very thick,fill=black] (3.5,-13.1) circle (.1cm);
\draw[very thick,fill=black] (5,-13.1) circle (.1cm);
\draw[very thick,fill=black] (5,-11.6) circle (.1cm);
\draw[very thick,fill=black] (6.5,-13.1) circle (.1cm);
\draw[very thick,fill=black] (8,-13.1) circle (.1cm);
\draw[very thick,fill=black] (9.5,-13.1) circle (.1cm);
\draw[very thick,fill=black] (11,-13.1) circle (.1cm);
\draw[very thick] (2,-13.1) -- +(9,0);
\draw[very thick] (5,-11.6) -- +(0,-1.5);
\draw (2,-13.5) node {$s_2$};
\draw (3.5,-13.5) node {$s_3$};
\draw (5,-13.5) node {$s_4$};
\draw (6.5,-12.7) node {$s_5$};
\draw (8,-13.5) node {$s_6$};
\draw (9.5,-13.5) node {$s_7$};
\draw (11,-13.5) node {$s_8$};
\draw (5,-11.2) node {$s_1$};
\draw (11.8,-12.2) node {$X^\bot$};
\draw (9.5,-13.3) ellipse (2cm and .8cm);
\draw[-stealth] (11.4,-12.4) -- +(-.45,-.25);
\draw (1.2,-12.2) node {$X$};
\draw (3.5,-13.3) ellipse (2cm and .8cm);
\draw[-stealth] (1.45,-12.4) -- +(.45,-.25);
\draw (7.1,-11.2) node {$\partial X$};
\draw [rotate around={135:(5.75,-12.15)}] (5.75,-12.15) ellipse (1.7cm and .6cm);
\draw[-stealth] (6.7,-11.4) -- +(-.45,-.25);
\end{tikzpicture}
\caption{Example :~$\partial(X)$ and~$X^\bot$}\label{exampledeltaetperp}
\end{figure}
\end{center}
\begin{df}\label{defribb}
   \begin{enumerate}
\item Let~$t$ belong to~$S$ and~$X$ be included in~$S$.  Denote by~$X(t)$ the indecomposable component of~$X\cup\{t\}$ containing~$t$. If~$t$ lies in~$X$, we set ~$d_{X,t}=\D_{X(t)}$; otherwise, we set  $$d_{X,t} = \D_{X}\D_{X-\{t\}}^{-1},$$ that is~$d_{X,t} = \D_{X(t)}\D_{X(t)-\{t\}}^{-1}$. In both cases, there exists~$Y\subseteq X\cup\{t\}$ and~$t'\in X(t)$ so that~$Y d_{X,t} = d_{X,t} X$ with~$Y\cup\{t'\} = X\cup\{t\}$ and~$Y(t')= X(t)$. The element~$d_{X,t}$ is called a {\em positive elementary~$Y$-ribbon-$X$}.
\item For~$X,Y\subseteq S$, we say that~$g\in A_S^+$ is a positive~$Y$-ribbon-$X$ if ~$Yg = gX$. \end{enumerate}\end{df}
For instance,  considering the example in Figure~\ref{exampledeltaetperp}, ~$d_{X,s_5}  = s_2s_3s_4s_5$,~$d_{X,t} = t$  for~$t$ in~$X^\perp$ and~$\Delta$ is a positive~$X$-ribbon-$X$. 

The connection between positive ribbons and elementary ones appears in the following result    
\begin{prp} Assume~$A_S$ is a spherical type Artin-Tits group and~$g$ lie in~$A_S^+$.
~$g$ is a positive~$Y$-ribbon-$X$ if and only if ~$g=g_n\cdots g_1$ where each~$g_i$ is a positive elementary~$X_i$-ribbon-$X_{i-1}$, with~$X_0=X$ and~$X_n=Y$. 
\end{prp}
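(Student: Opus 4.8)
The plan is to prove both implications by analysing the $Y$-action defined by conjugation by $g$, using Garside theory to reduce an arbitrary positive ribbon to a product of elementary ones.

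\smallskip

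\noindent\textbf{The easy direction.} Suppose $g = g_n\cdots g_1$ with each $g_i$ a positive elementary $X_i$-ribbon-$X_{i-1}$, $X_0 = X$, $X_n = Y$. By Definition~\ref{defribb}(i), $X_i g_i = g_i X_{i-1}$ for each $i$, hence $X_i g_i \cdots g_1 = g_i X_{i-1} g_{i-1}\cdots g_1 = \cdots = g_i \cdots g_1 X_0$. Iterating, $Y g = Y g_n\cdots g_1 = g_n\cdots g_1 X = g X$, so $g$ is a positive $Y$-ribbon-$X$. This is a short induction on $n$.

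\smallskip

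\noindent\textbf{The hard direction.} Assume $g \in A_S^+$ satisfies $Y g = g X$; I want to peel off one elementary ribbon. First, $|X| = |Y|$ since conjugation by $g$ (in the group $A_S$) sends the subgroup $A_X$ onto $A_Y$, which by Proposition~\ref{ThVDL}(i) forces the generating sets to have equal cardinality, and moreover $g \in N_{A_S}(A_X)$-type behaviour shows $g$ actually realizes a graph isomorphism $\Gamma_X \to \Gamma_Y$. If $g = 1$ then $X = Y$ and we are done with $n = 0$. Otherwise pick $t$ with $t \preceq g$. The key claim is that $t \in \partial X \cup X^{\bot}$ (equivalently $t \notin X$, or $t \in X$ only in a degenerate component case), and that the elementary ribbon $d_{X,t}$ left-divides $g$: once we know $d_{X,t} \preceq g$, write $g = g' \cdot d_{X,t}$, observe (from Definition~\ref{defribb}(i)) that $d_{X,t}$ is a positive $X_1$-ribbon-$X$ for the appropriate $X_1 = Y_{\mathrm{loc}} \subseteq X\cup\{t\}$, deduce $Y g' = g' X_1$ by cancelling $d_{X,t}$ on the right (using left-cancellativity of the Garside monoid $A_S^+$, Proposition~\ref{reststdAT}(ii)(a)), and finish by induction on the length $\ell_S(g)$ since $\ell_S(g') < \ell_S(g)$.

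\smallskip

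\noindent The crux — and the main obstacle — is showing $d_{X,t} \preceq g$. Here is the mechanism I expect to use. Since $t \preceq g$ and $Y g = g X$, for every $x \in X$ we have $g x = y g$ for the corresponding $y \in Y$, so $g x \in A_S^+$ and $t \preceq g \preceq gx$; more usefully, consider the common multiples and use the left-lcm structure in $A_S^+$. One shows by a standard word-combinatorial argument on the Coxeter graph (as in \cite{Par1,Got,FRZ}) that the left-lcm of $t$ and $A_{X(t)}$-generators forces $\D_{X(t)}$, and hence $d_{X,t} = \D_{X(t)}\D_{X(t)-\{t\}}^{-1}$ when $t\notin X$, to be a left-divisor of $g$: concretely, $g$ conjugates $A_X$ to $A_Y$, so it conjugates $A_{X(t)\cup\{\ldots\}}$ appropriately, and the minimal positive element doing the ``rotation around the edge at $t$'' is exactly $d_{X,t}$, which must therefore left-divide $g$ by the universal property of lcm's together with the orthogonal-splitting uniqueness in Proposition~\ref{reststdAT}(ii)(a). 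The delicate points are (1) checking that some initial letter $t$ of $g$ really is attached to $X$ in the required way rather than lying ``deeper'' in $Y$, which uses that $g$ is a minimal-length ribbon between the components — one may first reduce to the case where $X$ is connected by treating each indecomposable component of $X$ separately — and (2) verifying $Y_{\mathrm{loc}}(t') = X(t)$ so that the induction hypothesis applies with the new pair $(X_1, Y)$. Both are handled by the local analysis already packaged in Definition~\ref{defribb}(i), which guarantees that each $d_{X,t}$ comes with its companion set $Y$ and element $t'$; the induction then simply concatenates these local data.
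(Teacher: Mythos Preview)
The paper does not give its own proof of this proposition: it is quoted from \cite{Par1,Got} as a known result, and the paper immediately afterwards records the key technical tool as Proposition~\ref{lm58}(i)(b) --- for a positive $Y$-ribbon-$X$ element $u$ and $t\in S$, one has $u\succeq t \Leftrightarrow u\succeq d_{X,t}$ --- again with a reference to the literature. Granting that lemma, the hard direction is a one-line induction on $\ell_S(g)$: pick any $t$ with $g\succeq t$, deduce $g\succeq d_{X,t}$, factor $g=g'\,d_{X,t}$, observe by cancellativity that $g'$ is a positive $Y$-ribbon-$X_1$, and recurse.

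Your sketch has the right skeleton but contains a genuine left/right inconsistency that breaks it as written. You choose $t$ with $t\preceq g$ (a \emph{left}-divisor), then assert $d_{X,t}\preceq g$, and yet write $g=g'\cdot d_{X,t}$, which would require $d_{X,t}$ to be a \emph{right}-divisor. The elementary ribbon $d_{X,t}$ has $t$ as its rightmost letter (cf.\ the example $d_{X,s_5}=s_2s_3s_4s_5$ after Definition~\ref{defribb}) and satisfies $X_1\,d_{X,t}=d_{X,t}\,X$, so in the decomposition $g=g_n\cdots g_1$ it is $g_1$ and must be peeled off from the \emph{right}, starting from a $t$ with $g\succeq t$. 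Correspondingly, the relevant divisibility statement is $g\succeq d_{X,t}$, exactly what Proposition~\ref{lm58}(i)(b) supplies. Your side claim that necessarily $t\in\partial X\cup X^\bot$ is neither needed nor correct in general: $t\in X$ is perfectly allowed (e.g.\ $g=\Delta_X$ is an $X$-ribbon-$X$ with every $t\in X$ a right-divisor), and Definition~\ref{defribb} covers this by setting $d_{X,t}=\Delta_{X(t)}$. Once you fix the side and invoke Proposition~\ref{lm58}(i)(b), the ``standard word-combinatorial argument on the Coxeter graph'' you gesture at is no longer needed --- that argument is precisely what goes into proving \ref{lm58}(i)(b), and the paper outsources it to \cite{Par1,Got}.
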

\begin{prp}\label{lm58}
Assume~$A_S$ is a spherical type Artin-Tits group. Let~$X$ be included in~$S$ and~$u$ be included in $A_S^+$.  Let~$\varepsilon\in\{1,2\}$ be such that~$\D_X^\varepsilon$ lies in~$Z(A_X)$.
\begin{enumerate} 
\item Assume $u$ is a positive $Y$-ribbon-$X$ for some $Y\subseteq S$.
\begin{enumerate}
\item  $\D_Yu=u\D_X$.
\item Assume~$t$ belongs to~$S$. Then,~$u\succeq t~\Leftrightarrow~u\succeq d_{X,t}$.
\end{enumerate}
\item If~$u\D_X^\varepsilon\succeq u$, then there exists $Y\subseteq S$ such that $u\D_X^\varepsilon u^{-1}=\D_{Y}^\varepsilon$, $uA_X u^{-1}=A_{Y}$  and $\Gamma_X\sim\Gamma_{Y}$. Moreover, if $u$ is reduced-$X$, then $u$ is a positive $Y$-ribbon-$X$, that is $Yu = uX$. 
\end{enumerate}
\end{prp}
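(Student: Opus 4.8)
The plan is to prove the two assertions of Proposition~\ref{lm58} by reducing the general (positive ribbon) case to the elementary case via the preceding proposition, and by exploiting Garside-theoretic divisibility in the spherical-type monoid~$A_S^+$.

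For part~(i)(a), I would first treat an elementary positive $X_i$-ribbon-$X_{i-1}$, call it $g_i = d_{X_{i-1},t_i}$. Directly from Definition~\ref{defribb}(i) we have $X_i g_i = g_i X_{i-1}$, hence $g_i$ conjugates $A_{X_{i-1}}$ onto $A_{X_i}$, so it carries the center element $\D_{X_{i-1}}^{\varepsilon}$ to $\D_{X_i}^{\varepsilon}$; more precisely $\D_{X_i} g_i = g_i \D_{X_{i-1}}$ since $g_i$ conjugates the generating set $X_{i-1}$ to $X_i$ and hence the lcm of $X_{i-1}$ to the lcm of $X_i$. One then composes: if $u = g_n\cdots g_1$ with each $g_i$ elementary (by the cited Proposition), telescoping gives $\D_Y u = \D_{X_n} g_n\cdots g_1 = g_n \D_{X_{n-1}} g_{n-1}\cdots g_1 = \cdots = g_n\cdots g_1 \D_{X_0} = u\D_X$. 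For~(i)(b), the forward implication $u\succeq d_{X,t}\Rightarrow u\succeq t$ is immediate since $t$ right-divides $d_{X,t}$ (either $d_{X,t}=\D_{X(t)}$ and every generator of $X(t)$ right-divides $\D_{X(t)}$, or $d_{X,t}=\D_X\D_{X\setminus\{t\}}^{-1}$, and one checks $t$ right-divides this from the defining identity $d_{X,t}X = Y d_{X,t}$ with $t'\in X(t)$, $t' = $ image of $t$). For the converse, suppose $u\succeq t$; using the ribbon identity $uX = Yu$ and the right-divisibility, I would argue that $t$ right-dividing $u$ forces $d_{X,t}$ to right-divide $u$ by an induction on the elementary decomposition, keeping track of how right-divisibility by a generator is transported through an elementary ribbon (this is where the combinatorics of $d_{X,t}$ as $\D_{X(t)}\D_{X(t)\setminus\{t\}}^{-1}$ enters, and the fact that in a Garside monoid right-divisors form a lattice).

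For part~(ii), the hypothesis $u\D_X^{\varepsilon}\succeq u$ means there is $c\in A_S^+$ with $u\D_X^\varepsilon = cu$, i.e. $u\D_X^\varepsilon u^{-1} = c$ is positive. Since $\D_X^\varepsilon$ is central in $A_X$, conjugation by $u$ sends $\D_X^\varepsilon$ to a positive element $c$ that is central in $uA_Xu^{-1}$; I would then invoke the structure theory of parabolic subgroups (Proposition~\ref{ThVDL}) together with the characterization of centers of spherical-type parabolics (Proposition~\ref{reststdAT}(ii)) to identify $c$ as $\D_Y^\varepsilon$ for the subset $Y$ with $uA_Xu^{-1} = A_Y$, and to get $\Gamma_X\sim\Gamma_Y$ (same Coxeter graph up to iso, since conjugate parabolics are isomorphic as Artin-Tits groups and the isomorphism type determines the graph in spherical type). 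The final clause — that if $u$ is reduced-$X$ then $u$ is genuinely a positive ribbon, $Yu = uX$ — should follow from the cited ribbon-recognition results (the reduced-$X$ condition is exactly what rules out $u$ absorbing a bit of $A_X$ and guarantees $u$ conjugates the generating set, not merely the subgroup); here I would lean on Proposition~\ref{lm58}(i)(b) applied in reverse, or on the minimality of a reduced-$X$ representative together with uniqueness of Charney splittings (Proposition~\ref{reststdAT}(ii)(a)).

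The main obstacle I expect is the converse in~(i)(b) and the ``reduced-$X$'' clause in~(ii): transporting a single-generator right-divisibility statement through a composite ribbon, and proving that a positive element conjugating $A_X$ onto $A_Y$ and satisfying the divisibility hypothesis must actually conjugate the \emph{generating set} $X$ onto $Y$ rather than just the subgroup. Both points hinge on controlling how $\D_X$ and the $d_{X,t}$ interact with the left/right divisibility lattice of the Garside monoid, and on the fact (Proposition~\ref{reststdAT}(ii)(a)) that common divisors behave well; the bookkeeping of indecomposable components of $X\cup\{t\}$ through the chain $X_0,\dots,X_n$ is the fiddly part. Everything else — the telescoping in~(i)(a), the easy direction of~(i)(b), and the identification of $c$ with $\D_Y^\varepsilon$ in~(ii) — is routine given the quoted results.
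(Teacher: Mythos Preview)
The paper does not actually prove Proposition~\ref{lm58}: immediately after the statement it simply records that the results are ``well-known from specialists'' and gives pointers to \cite{Par1,Got,God6}. So there is no in-paper argument to compare against; what matters is whether your sketch would in fact work.

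Your treatment of (i)(a) --- reduce to elementary ribbons via the preceding decomposition proposition and telescope --- is the standard argument and is fine. The easy direction of (i)(b) is also fine. The converse in (i)(b) is, as you note, the delicate point; your outline (induct along the elementary decomposition, tracking how a single right-divisor $t$ propagates) is roughly what the cited references do, though you would need to make precise how an elementary ribbon $d_{X_{i-1},t_i}$ interacts with right-divisibility by a generator not equal to $t_i$.

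Part (ii) is where your sketch has a genuine gap. You write that you would ``invoke the structure theory of parabolic subgroups (Proposition~\ref{ThVDL}) together with the characterization of centers of spherical-type parabolics (Proposition~\ref{reststdAT}(ii)) to identify $c$ as $\D_Y^\varepsilon$ for the subset $Y$ with $uA_Xu^{-1}=A_Y$''. But nothing in Propositions~\ref{ThVDL} or~\ref{reststdAT} tells you that the conjugate $uA_Xu^{-1}$ is a \emph{standard} parabolic $A_Y$; Proposition~\ref{ThVDL} only describes subgroups already generated by subsets of $S$, and a conjugate of $A_X$ need not be of this form in general. You are assuming the conclusion. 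In the cited sources the logic runs the other way: one first reduces to $u$ reduced-$X$, then proves \emph{by induction on the length of $u$} that $u$ is a positive $Y$-ribbon-$X$ (peeling a letter $s\in S\setminus X$ off the right of $u$ and showing the resulting factor is an elementary ribbon), and the identities $uXu^{-1}=Y$, $uA_Xu^{-1}=A_Y$, $u\D_X^\varepsilon u^{-1}=\D_Y^\varepsilon$ then fall out of the ribbon property via (i)(a). So the ``existence of $Y$'' clause and the ``reduced-$X$ $\Rightarrow$ ribbon'' clause are proved together, not in the order you propose; the first does not come for free from centrality considerations.
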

The above results are not all explicitly stated in \cite{Par1,Got} but are well-known from specialists. The second part of (ii) is stated in \cite[Lemma 2.2]{Got} and the first part follows (see also \cite[Lemma 5.6]{Par1}). Point~(i) is proved in the proof of \cite[Lemma~2.2]{Got} (see \cite[Lemma 5.6]{Par1} for details). For point (i)(b), see also \cite[Example 3.14]{God6}. \\    
The \emph{support} of a word on~$D$ is the set of letters that are involved in this word. It follows from the presentation of~$A^+_S$ that two representing words of the same element in~$A_S^+$  have the same support. So the {\em support} of an element of~$A_S^+$ is well-defined. In the sequel, by~$Supp(g)$ we denote the support of an element~$g$ in~$A_S^+$.\\
\begin{lm}\label{lm59} Assume~$A_S$ is a spherical type Artin-Tits group. 
Let~$X\subsetneq S$ be such that~$\Gamma_X$ is connected, and~$t\in\partial X$. Then $$Supp(d_{X,t})=X\cup\{t\}.$$
\end{lm}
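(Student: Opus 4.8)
The plan is to push everything into the associated Coxeter group and reduce the statement to a standard fact about roots. Write $Z:=X\cup\{t\}$. Since $\Gamma_X$ is connected and $t\in\partial X$, the graph $\Gamma_Z$ is connected, so $X(t)=Z$ and hence $d_{X,t}=\Delta_Z\Delta_X^{-1}\in A_Z^+$; in particular $Supp(d_{X,t})\subseteq Z$. Using $Supp(\Delta_Y)=Y$ for every $Y$ and $Supp(ab)=Supp(a)\cup Supp(b)$ in $A_S^+$, the factorisation $\Delta_Z=d_{X,t}\,\Delta_X$ already gives $t\in Supp(d_{X,t})$, so the real content of the lemma is the inclusion $X\subseteq Supp(d_{X,t})$.

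To obtain this I would argue by contradiction through the canonical projection $\pi\colon A_S\to W_S$ onto the associated Coxeter group. Set $T:=Supp(d_{X,t})$; then $d_{X,t}\in A_T^+$, so $w:=\pi(d_{X,t})=\pi(\Delta_Z)\pi(\Delta_X)^{-1}=w_0^Z\,w_0^X$ lies in $W_T$, where $w_0^Y$ denotes the longest element of the finite Coxeter group $W_Y$. As $T\subseteq Z$, it suffices to show that $w=w_0^Z w_0^X$ lies in no proper standard parabolic of $(W_Z,Z)$, i.e.\ that $w\notin W_{Z\setminus\{s\}}$ for every $s\in Z$: this forces $T\supseteq Z$, hence $T=Z$. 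The crucial step is the computation of the inversion set $N(w):=\{\gamma\in\Phi^+(Z)\mid w\gamma\in\Phi^-(Z)\}$, where $\Phi(Z)$ is the root system of $W_Z$ in its geometric representation and $\Phi^+(Z')$ denotes the positive roots supported on $Z'\subseteq Z$. Since $w_0^Z$ sends $\Phi^+(Z)$ to $\Phi^-(Z)$, one has $w\gamma<0\iff w_0^X\gamma>0$ for $\gamma\in\Phi^+(Z)$; and because $w_0^X\in W_X$ modifies $\gamma$ only by an element of $\mathrm{span}\{\alpha_u\mid u\in X\}$, it leaves the $\alpha_t$-coefficient of $\gamma$ unchanged, so $w_0^X\gamma>0$ holds exactly when $t$ belongs to the support of $\gamma$. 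Hence $N(w)=\{\gamma\in\Phi^+(Z)\mid t\in\mathrm{supp}(\gamma)\}$, and, since $w\in W_{Z'}$ implies $N(w)\subseteq\Phi^+(Z')$, if $w$ lay in $W_{Z\setminus\{s\}}$ then no positive root of $W_Z$ could have both $s$ and $t$ in its support.

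It therefore remains to produce, for each $s\in Z$, a positive root of $W_Z$ whose support contains both $s$ and $t$. I would pick a simple path $P$ from $s$ to $t$ in the connected graph $\Gamma_Z$: the diagram induced on $P$ is connected and of finite type, so $W_P$ is an irreducible finite Coxeter group, and one invokes the well-known fact that an irreducible finite Coxeter system carries a positive root with full support (among positive roots take one of maximal support; if that support is not everything, connectedness of the diagram lets one enlarge it by applying a suitable simple reflection, a contradiction). Such a root of $W_P$ is a positive root of $W_Z$ supported exactly on $P$, hence contains both $s$ and $t$ (the case $s=t$ being trivial, with $\gamma=\alpha_t$). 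This completes the plan; note that the case $s=t$ of the last step reproves $t\in Supp(d_{X,t})$.

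The only non-elementary input is the full-support-root fact for irreducible finite Coxeter systems; the step needing genuine care is the inversion-set computation, in particular the claim that $w_0^X\gamma$ stays positive precisely when $\gamma$ involves $t$, which rests on $W_X$ acting trivially on the $\alpha_t$-coordinate. As a variant one can avoid this and run a length count: from $w_0^Z=w\,w_0^X\in W_{Z\setminus\{s\}}W_X$ and the decomposition of $W_X$ along $W_{X\setminus\{s\}}\subseteq W_{Z\setminus\{s\}}$ one gets $\ell(w_0^Z)\le\ell(w_0^{Z\setminus\{s\}})+\ell(w_0^X)-\ell(w_0^{X\setminus\{s\}})$, which says that the number of positive roots of $W_Z$ involving both $s$ and $t$ is zero, again contradicted by the path root; the inversion-set argument seems cleaner, though.
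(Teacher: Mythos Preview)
Your argument is correct. The passage to the Coxeter group is legitimate because $d_{X,t}=\Delta_Z\Delta_X^{-1}$ is a simple element (a left divisor of $\Delta_Z$), so its support in $A_S^+$ equals the parabolic support of its image $w=w_0^Zw_0^X$ in $W_S$; but as you note, you do not even need this identification, only the trivial implication $d_{X,t}\in A_T^+\Rightarrow w\in W_T$. The inversion-set computation is clean and correct: $W_X$ does preserve the $\alpha_t$-coordinate, so $N(w)$ is exactly the set of positive roots of $W_Z$ with $t$ in their support, and the existence of a full-support root along any path in $\Gamma_Z$ then finishes it.

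This is, however, a genuinely different route from the paper. The paper never leaves the Artin monoid: it uses the ribbon equation $Y\,d_{X,t}=d_{X,t}\,X$ together with the lattice (lcm) structure of $A_S^+$ to show, by induction along a path $t=s_0,s_1,\ldots,s_n=s$ in $\Gamma_Z$, that the word $s_n\cdots s_1 s_0$ right-divides $d_{X,t}$, whence every $s\in X$ lies in the support. Your proof trades this Garside-theoretic divisibility induction for classical root-system combinatorics. The paper's approach has the virtue of staying inside the toolkit already developed (Proposition~\ref{lm58}) and gives the slightly sharper information that a specific word right-divides $d_{X,t}$; your approach is arguably more transparent, isolates a clean description of the inversion set of $w_0^Zw_0^X$, and makes the role of connectedness of $\Gamma_Z$ (via the path-root) completely explicit.
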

\begin{proof} By assumption~$t$ is not in~$S$, so~$d_{X,t} =  \D_{X}\D_{X-\{t\}}^{-1}$ and 
$Supp(d_{X,t})$ is included in~$X\cup\{t\}$. Let us show the converse inclusion. Now,  by Proposition~\ref{lm58} (i), we have~$d_{X,t} = v_0s_0$ for some~$v_0$ in~$A_S^+$ and~$s_0=t$ belongs to the support of~$d_{X,t}$.  Let~$s$ be in~$X$. Since~$X$ is connected and~$t$ belongs to~$\partial X$, there exists a finite sequence  ~$s_1,\dots,s_n$ of~$X$ such that~$s_n = s$ and for all~$i\geq0$, we have~$m_{s_i,s_{i+1}}\neq 2$. We assume the sequence is chosen so that~$n$ is minimal. Assume~$d_{X,t} = v_i s_i\cdots s_0$ for some~$0 \leq i<n$ with~$v_i$ in~$A_S^+$.  Since~$Yd_{X,t} = d_{X,t}X$ for some~$Y\subseteq X\cup\{t\}$, we can write ~$v_is_i\cdots s_0s_{i+1} =  s'_{i+1}v_is_i\cdots s_0$ for some~$s'_{i+1}$ in~$X\cup\{t\}$. By minimality of~$n$,~$m_{s_js_{i+1}}=2$ for any~$j<i$. So~$v_is_is_{i+1}s_{i-1}\cdots s_0 = s'_{i+1}v_is_i\cdots s_0$ and~$v_is_is_{i+1} =  s'_{i+1}v_is_i$. This imposes that~$v_is_is_{i+1} =  s'_{i+1}v_is_i = v'\underbrace{\cdots s_{i+1}s_is_{i+1}}_{m\ terms}$ with~$m = m_{s_i,s_{i+1}}$ and~$v'$ in~$A_S^+$ (see  \cite{Del,BrS}). This imposes in turn that we can write~$v_i = v_{i+1}s_{i+1}$ and~$d_{X,t} = v_i s_i\cdots s_0$.  Then, we obtain step-by-step that~$d_{X,t}$ can be decomposed as~$v_n s_n\cdots s_0$. Hence~$s$ belongs to the support of~$d_{X,t}$ for any~$s$ in~$X$.  So the converse inclusion holds. 
\end{proof}
\begin{lm}\label{lm512}
Let~$u\in A_S^+$ and~$s\in S$. Denote by~$u_2^{-1}v_1$ the left orthogonal splitting of the element~$u^{-1}su$. There exists~$u_1$ in~$A^+_S$  and~$s_1$ in~$S$ so  that~$u = u_1u_2$, $v_1 = s_1u_2$. Moreover, $u_1$ is a positive~$s$-ribbon-$s_1$ .
\end{lm}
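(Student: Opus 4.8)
The plan is to pin down the decomposition $u=u_1u_2$ using Garside theory, to reduce the whole statement to a single positivity assertion, and then to establish that assertion by induction on the length of $u$. Since $A_S$ is of spherical type, $A_S^+$ is a Garside monoid; moreover the relations in the presentation~(\ref{presarttitsgrps}) are length‑preserving, so there is a length homomorphism $\varepsilon_0\colon A_S\to\Z$ sending each generator to $1$, which restricts to the word length on $A_S^+$. Write $u^{-1}su=u_2^{-1}v_1$ for the left orthogonal splitting. Applying the last assertion of Proposition~\ref{reststdAT}(ii)(a) with $g=u^{-1}su$ and $c=u$ (so $cg=u\cdot u^{-1}su=su\in A_S^+$) yields that $u_2$ right‑divides $u$; put $u_1=uu_2^{-1}\in A_S^+$, so $u=u_1u_2$ (and $u_1$ is forced by $u_2$). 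Cancelling $u_2^{-1}$ on the left of $u^{-1}su=u_2^{-1}v_1$ gives
\[
v_1=u_1^{-1}su_1u_2=(u_1^{-1}su_1)u_2 .
\]

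Next I would reduce the lemma to the \emph{key claim} that $h:=u_1^{-1}su_1$ lies in $A_S^+$. Granting it, $\varepsilon_0(h)=1$, so $h$ is a positive element of length $1$, hence a generator, say $h=s_1\in S$. Then $v_1=hu_2=s_1u_2$ and $su_1=u_1h=u_1s_1$, i.e.\ $\{s\}u_1=u_1\{s_1\}$, so $u_1$ is a positive $s$-ribbon-$s_1$ in the sense of Definition~\ref{defribb}(ii). This is precisely the conclusion of the lemma.

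It remains to prove the key claim. Unwinding the definition of the left orthogonal splitting of $u^{-1}\cdot(su)$ identifies $u_1$ with $\gcd_L(u,su)$, the left gcd in $A_S^+$, and $h\in A_S^+$ is equivalent to $u_1\preceq su_1$. I would argue by induction on $\lambda(u)$. If $u_2=1$ (in particular if $\lambda(u)=0$), then $h=u^{-1}su=v_1\in A_S^+$ and there is nothing to prove; so assume $u_2\neq1$, whence $\lambda(u_1)<\lambda(u)$. Write $u=ru'$ with $r\in S$. If $r=s$, then $su=s^2u'$ and $u_1=\gcd_L(su',s^2u')=s\,\gcd_L(u',su')=:su_1'$; by induction $(u_1')^{-1}su_1'\in A_S^+$, and from $su_1=s^2u_1'=(su_1')\bigl((u_1')^{-1}su_1'\bigr)$ we get $u_1=su_1'\preceq su_1$. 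If $r\neq s$ and $m_{rs}=2$, then $s$ and $r$ commute, $su=rsu'$, $u_1=r\,\gcd_L(u',su')$, and the same computation (with $r$ replacing the leftmost $s$) applies. The remaining case, $r\neq s$ and $m_{rs}\geq3$ (finite, since $A_S$ is spherical), is the decisive one: if $s\preceq u$ one peels a leftmost $s$ from $u$ and argues as in the first case; if $s\not\preceq u$, one works inside a left translate of the rank‑two parabolic $A_{\{r,s\}}$ and uses the ribbon calculus of Section~\ref{sectribbon} (Proposition~\ref{lm58}) to show that the maximal common left‑divisor $u_1$ of $u$ and $su=s\cdot u$ is assembled from elementary ribbons that braid the prefixed letter $s$ through, forcing $su_1=u_1t$ for some atom $t\in S$, i.e.\ $u_1\preceq su_1$.

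The step I expect to be the real obstacle is exactly this last case ($r\neq s$, $m_{rs}\geq3$, $s\not\preceq u$): one must show that a non‑trivial maximal common left‑divisor $u_1=\gcd_L(u,su)$ necessarily carries $s$ to a generator, equivalently that the numerator $v_1$ of the left orthogonal splitting of $u^{-1}su$ is of the form $s_1u_2$. This is the genuinely non‑formal content of the lemma; once it is established, the remaining bookkeeping with the Garside structure of $A_S^+$ is routine.
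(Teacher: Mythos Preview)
Your reduction is sound: the Garside argument extracting $u_1\in A_S^+$ with $u=u_1u_2$ via Proposition~\ref{reststdAT}(ii)(a) is correct, the identification $u_1=\gcd_L(u,su)$ is valid, and once $h:=u_1^{-1}su_1$ is known to be positive the length count forces $h=s_1\in S$ and $su_1=u_1s_1$, so the ribbon property comes for free without a separate induction. The paper takes essentially the same route but does not argue the key claim from scratch: it invokes \cite[Theorem~1]{GKT} for the decomposition $v_1=s_1u_2$ (equivalently, your key claim) and \cite[Lemma~2.3]{GKT} together with an induction on~$\lambda(u)$ for the ribbon assertion.

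The genuine gap in your proposal is precisely where you flag it. The inductive cases $r=s$ and $m_{rs}=2$ are handled correctly by peeling off~$r$ and reducing to~$u'$, but in the case $r\neq s$, $m_{rs}\geq3$, $s\not\preceq u$, the appeal to Proposition~\ref{lm58} does not do the work you need: that proposition concerns right-divisibility of a \emph{given} ribbon by elementary ribbons~$d_{X,t}$, not the structure of $\gcd_L(u,su)$ when a non-commuting atom is prefixed to~$u$. What is actually required is a dihedral reduction: if some atom $t\neq s$ left-divides~$u_1$, then $s\vee t=\Delta_{\{s,t\}}$ left-divides~$su$, and one must strip an alternating prefix to pass to a strictly shorter instance with~$s$ replaced by the last letter of that prefix. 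This is the substantive content of \cite[Lemma~2.3]{GKT}; your sketch gestures at it but does not supply it, and it is not a formal consequence of the material in Section~\ref{sectribbon}.
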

\begin{proof}
 By \cite[Theorem~1]{GKT} there exists~$u_1$ in~$A^+_S$ so that~$u = u_1u_2$ and~$v_1 = s_1u_2$ for some~$s_1$ in~$S$. Moreover, applying~\cite[Lemma~2.3]{GKT}, a straightforward induction on the length of $u$ proves that~$u_1$ is a positive ~$s$-ribbon-$s_1$. 
\end{proof}
In the sequel, we say that an element of~$A_S^+$ is a positive ribbon-$X$ when it is a positive~$Y$-ribbon-$X$, for some~$Y$. Similarly we say that an element is a positive $Y$-ribbon when it is a positive~$Y$-ribbon-$X$.

\subsection{The proof of Theorem~\ref{theointro1}.}
In  this section we prove Theorem~\ref{theointro1}. The proof need two preliminary results, namely Lemma~\ref{lm517} and Proposition \ref{prp53}, which is the main argument. The latter is proved here; the proof of the former is postponed  to the next section.

\begin{lm}\label{lm517}
Under the assumptions of Proposition \ref{prp53}, we have $b\succeq s$ for all $s\in S\setminus X$.
\end{lm}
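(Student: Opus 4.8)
The plan is, for each $s\in S\setminus X$ separately, to produce a commutation relation in $A_S$ that forces $s$ to right-divide $b$. I read the hypotheses of Proposition~\ref{prp53} as: $g$ is an element of $\DZ_{A_S}(A_X)$, presented through its left orthogonal splitting $g=a^{-1}b$, and $g$ does not belong to $A_X\cdot QZ(A_S)$. Fix $s\in S\setminus X$; since $S\setminus X=X^{\bot}\sqcup\partial X$ I would split the argument along this dichotomy. Write $X(s)$ for the indecomposable component of $X\cup\{s\}$ containing $s$, as in Definition~\ref{defribb}; being a standard parabolic of a spherical type group, $A_{X(s)}$ is itself of spherical type, hence possesses a Garside element $\Delta_{X(s)}$.

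First I would exhibit a positive element of $Z_{A_S}(A_X)$ that is right-divisible by $s$. Let $\varepsilon_s\in\{1,2\}$ be such that $h_s:=\Delta_{X(s)}^{\varepsilon_s}$ lies in $Z(A_{X(s)})$, by Proposition~\ref{reststdAT}(ii)(b). Since $X(s)$ is a connected component of $X\cup\{s\}$ with $X(s)\setminus\{s\}\subseteq X$, the element $h_s$ centralizes $A_{X(s)\setminus\{s\}}$ (it is central in $A_{X(s)}$) and commutes with $A_{X\setminus X(s)}$ (disjoint, commuting supports); hence $h_s\in Z_{A_S}(A_X)$, and since $g\in\DZ_{A_S}(A_X)$ we get $g h_s=h_s g$. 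Moreover $h_s\in A_S^{+}$ and, as $\Delta_{X(s)}$ is right-divisible by every element of $X(s)$, we have $h_s\succeq s$. (When $s\in X^{\bot}$ one may instead simply take $h_s=s\in Z_{A_S}(A_X)$, which makes the next step immediate.)

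Next I would transfer this divisibility onto $b$. Substituting $g=a^{-1}b$ into $g h_s=h_s g$ and clearing denominators yields $k:=a h_s a^{-1}=b h_s b^{-1}$, so that $ka=a h_s$ and $kb=b h_s$ are both positive. Writing the left orthogonal splitting $k=p^{-1}q$, one gets $qa=p(a h_s)$ and $qb=p(b h_s)$, so $p$ is a common left-divisor of $qa$ and $qb$; since the greatest common left-divisor of $qa$ and $qb$ equals $q$ times that of $a$ and $b$ (by cancellativity of $A_S^{+}$), which is $q$ because $a$ and $b$ are left-coprime, $p$ left-divides $q$, forcing $p=1$ and $k\in A_S^{+}$. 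Now $b\Delta_{X(s)}^{\varepsilon_s}=kb\succeq b$, so Proposition~\ref{lm58}(ii), applied with $u=b$ and with $X(s)$ playing the role of $X$, gives a subset $W$ of $S$ with $k=\Delta_W^{\varepsilon_s}$, $bA_{X(s)}b^{-1}=A_W$ and $\Gamma_{X(s)}\sim\Gamma_W$, and shows that, up to passing to its reduced-$X(s)$ part, $b$ is a positive $W$-ribbon-$X(s)$. Feeding this ribbon description into Proposition~\ref{lm58}(i)(b), together with the support computation of Lemma~\ref{lm59}, determines the elementary-ribbon right-divisors of $b$; translating this back, and using the control on $a$ and $b$ already obtained in Proposition~\ref{prp53}, one concludes $b\succeq s$.

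The step I expect to be the main obstacle is precisely this last extraction. The raw data ``$k=b h_s b^{-1}$ is positive and $bA_{X(s)}b^{-1}=A_W$'' does not by itself force $b\succeq s$: it fails whenever $b$ lies in $A_X$, and a fortiori when $b=1$ (as for $g=\Delta^{-2}\in Z(A_S)$, whose left orthogonal splitting has $b=1$), since then no $s\in S\setminus X$ right-divides $b$. This is exactly where the assumption $g\notin A_X\cdot QZ(A_S)$ of Proposition~\ref{prp53} must enter: it guarantees that $b$ genuinely reaches outside $X$ in a way that cannot be absorbed into $QZ(A_S)=\langle\Delta\rangle$, which is what pins down $W$ so that $s$ occurs as a right-divisor of $b$. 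Concretely I would organize the write-up so that the transparent case $s\in X^{\bot}$ (using $h_s=s$) is dispatched first, and then the substantial case $s\in\partial X$, where Lemma~\ref{lm59} and the ribbon machinery of Section~\ref{sectribbon} are genuinely needed.
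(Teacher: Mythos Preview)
Your proposal rests on a misreading of the hypotheses of Proposition~\ref{prp53}. There is no ambient element $g\in\DZ_{A_S}(A_X)$, no orthogonal splitting $g=a^{-1}b$, and no assumption of the form ``$g\notin A_X\cdot QZ(A_S)$'' in that statement. The hypotheses concern $b$ alone: $b\in A_S^{+}\setminus\{1\}$ is a positive ribbon-$(X\cup X^{\bot})$, is reduced-$X$, and satisfies $b\,\Delta_Y^{\varepsilon(Y)}\succeq b$ for every $Y\supseteq X$. Your first paragraph, which uses the coprimality of $a$ and $b$ to force $k=b h_s b^{-1}\in A_S^{+}$, is therefore arguing from data that is simply not available; and your final paragraph, where you invoke ``$g\notin A_X\cdot QZ(A_S)$'' to close the case, is appealing to a hypothesis that does not exist. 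You in fact acknowledge that the extraction step ``does not by itself force $b\succeq s$'' and that it ``fails whenever $b$ lies in $A_X$''; the correct replacement for the missing hypothesis is the clause $b\neq 1$ together with $b$ being reduced-$X$, but your outline never shows how these alone suffice.

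The paper's route is different in a way worth noting. Rather than working with the small set $X(s)$ or $X\cup\{s\}$, the paper fixes $s\in S\setminus X$ and applies the divisibility hypothesis to the \emph{large} set $Y=S\setminus\{s\}$. Writing $b=b_1b_2$ with $b_2\in A_Y^{+}$ and $b_1$ reduced-$Y$, the preliminary Lemma~\ref{lm516} (which in turn uses Lemma~\ref{lm514}) shows $Supp(b)=S$, hence $b_1\neq 1$; since $b_1$ is reduced-$Y$ and nontrivial, $s$ must right-divide $b_1$. The hypothesis $b\,\Delta_Y^{2}\succeq b$ then, via Proposition~\ref{lm58}(ii), makes $b_1$ a ribbon-$Y$ and gives $b=b_2'b_1$, so $s$ right-divides $b$. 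The key idea you are missing is this passage through $Supp(b)=S$ and the choice $Y=S\setminus\{s\}$: it is precisely what converts the qualitative information ``$b\neq 1$ and reduced-$X$'' into the concrete divisibility $b\succeq s$, without any appeal to an ambient $g$.
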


\begin{prp}\label{prp53}
Let~$A_S$ be an irreducible Artin-Tits group of spherical type. Let~$X\subsetneq S$. Let~$b$ be in~$A_S^+\setminus\{1\}$ a positive ribbon-$(X\cup X^\bot)$ that is reduced-$X$. Suppose further that for all~$Y\subseteq S$  containing~$X$, and~$\varepsilon(Y)\in\{1,2\}$ minimal such that~$\D_Y^{\varepsilon(Y)}\in Z_{A_S}(A_X)$, we have~$b\D_Y^{\varepsilon(Y)}\succeq b$. Then there exists~$n\in\N^*$ so that $$b=\D^n\D_X^{-n}$$ 
\end{prp}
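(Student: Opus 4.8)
The plan is to extract from the hypotheses enough ribbon/divisibility information to pin down $b$ completely, then bootstrap to the explicit formula $b = \Delta^n\Delta_X^{-n}$. First I would unpack what ``positive ribbon-$(X\cup X^\bot)$ reduced-$X$'' gives us: by Proposition~\ref{lm58}(i)(a) there is some $Z\subseteq S$ with $Zb = b(X\cup X^\bot)$ and $\Delta_Z b = b\Delta_{X\cup X^\bot}$, and by the reducedness no nontrivial element of $A_X$ right-divides $b$. Then I would invoke Lemma~\ref{lm517}: $b\succeq s$ for every $s\in S\setminus X$. Combined with the ribbon relation, right-divisibility by each generator outside $X$ forces $b$ to absorb a full copy of $\Delta$ on the right, in the sense that $b\succeq \Delta$: indeed $\Delta$ is the least common right multiple of $S$ in $A_S^+$, and having every $s\in S\setminus X$ as a right divisor together with the ribbon structure (which transports divisibility by elements of $X$ to divisibility by elementary ribbons $d_{X,s}$ via Proposition~\ref{lm58}(i)(b)) should give that $b$ is right-divisible by $\Delta_S = \Delta$.

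Next I would peel off this factor: write $b = b'\Delta$ is impossible dimensionally, so instead write $b = \Delta b''$? — more carefully, since $b\succeq\Delta$ one has $b = c\Delta$ for some $c\in A_S^+$; but I actually want to track the reduced-$X$ condition, so the cleaner move is to consider $b\Delta_X^{-1}$. Using the ribbon relation $\Delta_Z b = b\Delta_{X\cup X^\bot}$ and the fact that $\Delta$ conjugates $X$ to $X$ (it lies in $QZ(A_S)$ by Proposition~\ref{reststdAT}(ii)(b), and by Lemma~\ref{lm518}(iii)/the hypothesis $\Delta\in\mathrm{DZ}_{A_S}(A_X)$ it normalizes $A_X$), I would show $\Delta^{-1}b\Delta_X \in A_S^+$ and that it is again reduced-$X$, a positive ribbon-$(X\cup X^\bot)$, and satisfies the same divisibility hypotheses $b_1\Delta_Y^{\varepsilon(Y)}\succeq b_1$. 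This sets up an induction on the canonical length (or on the $\Delta$-infimum) of $b$: either $b = \Delta\Delta_X^{-1}$ already, or $b_1 := \Delta^{-1}b\Delta_X$ is a strictly shorter element with the same properties, and by induction $b_1 = \Delta^{n-1}\Delta_X^{-(n-1)}$, whence $b = \Delta\,b_1\,\Delta_X^{-1} = \Delta^n\Delta_X^{-n}$.

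The base case needs the hypothesis $b\neq 1$ to rule out $n=0$; and the termination of the induction needs that stripping a $\Delta$ strictly decreases a well-founded quantity while $b\Delta_X^{-1}$ stays positive — this is where the condition ``$b\succeq s$ for all $s\in S\setminus X$'' is used a second time, to guarantee $\Delta^{-1}b\Delta_X$ genuinely lost the full $\Delta$ rather than something smaller. One must also check the divisibility hypothesis ``$b_1\Delta_Y^{\varepsilon(Y)}\succeq b_1$ for all $Y\supseteq X$'' is inherited: since $\Delta_Y^{\varepsilon(Y)}\in Z_{A_S}(A_X)$ and $\Delta\in Z_{A_S}(A_X)\cdot(\text{ribbon data})$, conjugating the inequality $b\Delta_Y^{\varepsilon(Y)}\succeq b$ by $\Delta$ on the left and $\Delta_X$ on the right preserves it up to relabelling $Y\mapsto\tau^{-1}(Y)$ or similar, which still ranges over subsets containing $X$.

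**The main obstacle.** The delicate point is the step asserting $b\succeq\Delta$ from the hypotheses $b\succeq s$ for all $s\in S\setminus X$ \emph{together with} the ribbon structure — the generators inside $X$ are \emph{not} directly right divisors of $b$ (indeed $b$ is reduced-$X$!), so one cannot simply say ``$b$ is right-divisible by all of $S$ hence by $\Delta$''. The real content is that the ribbon relation $Zb = b(X\cup X^\bot)$ converts right-divisibility by $s\in S\setminus X$ into right-divisibility by the elementary ribbons $d_{X,s}$ (Proposition~\ref{lm58}(i)(b) and Lemma~\ref{lm59}, whose support computation shows these ribbons collectively involve all of $S$), and that the least common right multiple of $\{d_{X,s} : s\in S\setminus X\}$ in $A_S^+$ is exactly $\Delta\Delta_X^{-1}\cdot\Delta_X = \Delta$ when $A_S$ is irreducible. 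Making this lcm computation rigorous — essentially showing the ``ribbon generators'' of $S\setminus X$ generate the same Garside-type structure upstairs — is the crux; everything else is bookkeeping with the induction.
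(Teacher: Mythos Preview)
Your scaffolding---invoke Lemma~\ref{lm517}, extract a factor of the form $\Delta\Delta_X^{-1}$, then iterate---is exactly the paper's. Two points separate your sketch from a complete argument.

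\textbf{The ``main obstacle'' dissolves in one line.} You correctly observe that $b$ is \emph{not} right-divisible by the generators in $X$, so $b\succeq\Delta$ fails and you propose an lcm computation on the elementary ribbons $d_{X,s}$. The paper avoids this entirely: apply the hypothesis with $Y=X$ itself to get $b\Delta_X^{\varepsilon(X)}\succeq b$, hence $b\Delta_X\succeq b$ (if $\varepsilon(X)=2$ one still has $b\Delta_X\succeq b$ since $b$ is a ribbon-$X$, or just use $b\Delta_X^2\succeq b$ and the same argument). Now $b\Delta_X$ is right-divisible by every $s\in S\setminus X$ (through $b$, by Lemma~\ref{lm517}) and by every $s\in X$ (through $\Delta_X$). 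Therefore $b\Delta_X\succeq\Delta$, i.e.\ $b\succeq\Delta\Delta_X^{-1}$. No lcm is needed.

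\textbf{Your iteration assumes $\Delta X = X\Delta$, which is not given.} You write that ``$\Delta$ conjugates $X$ to $X$\ldots by Lemma~\ref{lm518}(iii)/the hypothesis $\Delta\in\DZ_{A_S}(A_X)$'', but $\Delta\in\DZ_{A_S}(A_X)$ is \emph{not} among the hypotheses of Proposition~\ref{prp53}. In general $\Delta X\Delta^{-1}=\tau(X)$ only. The paper handles this by setting $X_k:=\tau^k(X)$, taking $k$ maximal with $b\succeq\Delta^k\Delta_X^{-k}$, writing $b=d\,\Delta^k\Delta_X^{-k}$, and then verifying that $d$ is reduced-$X_k$, a positive ribbon-$(X_k\cup X_k^\bot)$, and satisfies $d\Delta_Y^{\eta(Y)}\succeq d$ for all $Y\supseteq X_k$ (where $\eta(Y)=\varepsilon(\tau^k(Y))$). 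This last verification uses that $\tau$ is an automorphism of the Coxeter graph, so $\tau^k$ sends $Z_{A_S}(A_X)$ to $Z_{A_S}(A_{X_k})$ and $\Delta_Y$ to $\Delta_{\tau^k(Y)}$. Your remark ``up to relabelling $Y\mapsto\tau^{-1}(Y)$'' is in the right direction, but the entire induction hypothesis has to be restated over $X_k$ rather than $X$; once that is done, if $d\neq1$ the first paragraph applies again and gives $d\succeq\Delta\Delta_{X_k}^{-1}$, contradicting maximality of~$k$.
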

Note that~$\Delta_X$ right-divides~$\Delta$ in~$A_S^+$ and~$\Delta \Delta_X = \Delta_{\tau(X)}\Delta$ by Proposition~\ref{reststdAT}. So for any positive integer~$n$  the element~$\D^n\D_X^{-n}$ belongs to~$A_S^+$. 
\begin{proof}[Proof of Proposition \ref{prp53}]
We have~$\D_X\succeq s$ for all~$s\in X$ and,  by Lemma \ref{lm517}, we have~$b\succeq s$ for all~$s\in S\setminus X$. Since, by assumption,~$b\D_X\succeq b$, we get that~$b\D_X\succeq s$ for all~$s\in S$. Thus~$b\D_X\succeq\D$ and, therefore,~$b\succeq\D\D_X^{-1}$ in~$A_S^+$. Let~$k\in\N^*$ be maximal such that~$b\succeq\D^k\D_X^{-k}$. Write~$b=d\D^k\D_X^{-k}$ with~$d\in A_S^+$. We show that~$d=1$.  This will prove the proposition. Since~$\D X=\tau(X)\D$,  the element~$\D^k$ is a positive~$\tau^k(X)$-ribbon-$X$ and~$\D^kX=\tau^k(X) \D^k$. Therefore by Proposition~\ref{lm58}, we have~$\D^k\D_X=\D_{\tau^k(X)}\D^k$. For the remaining of the proof, for~$Z\subseteq S$, we set~$Z_k = \tau^k(Z)$. Moreover,~$\D_X$ is a positive~$X$-ribbon-$X$. Then~$\D^k\D_X^{-k}$ is a positive~$X_k$-ribbon-$X$. For the remaining of the proof, when~$s$ lies in~$X_k$, we denote by~$s_X$ the element of~$X$ so that~$s\D^k\D_X^{-k} = \D^k\D_X^{-k}s_X$. 

Assume there exists~$s$ in~$X_k$ so that~$d = u s$ with~$u$ in~$A_S^+$.  Then we have~$b= us\D^k\D_X^{-k}$.  We get~$b = u\D^k\D_X^{-k}s_X$. But this is not possible, since~$b$ is reduced-$X$. Hence,~$d$ is reduced-$X_k$.  We now prove that~$d$ is a positive ribbon-$(X_k\cup X_k^\bot)$. Let~$s$ lie in~$X_k$. We have~$s\D^k\D_X^{-k}=\D^k\D_X^{-k}s_X$. By assumption,~$b$ is a positive ribbon-$X$, therefore there exists~$s'$ in~$S$ so that~$bs_X = s' b$. Hence we get~$ds\D^k\D_X^{-k} = d\D^k\D_X^{-k}s_X =  s'd\D^k\D_X^{-k}$, and therefore~$ds = s'd$.  As this so for every element of~$X_k$, we deduce that~$d$ is a positive ribbon-$X_k$.  Let~$s$ lie~$X_k^\bot$. For every~$t$ in~$X$, we have~$\tau^k(t)$ lies in~$X_k$ and, therefore,~$m_{\tau^k(t),s} = 2$. But the involution~$\tau$ induces an automorphism of the Coxeter graph associated with the presentation of~$A_S$. It follows that for every~$t$ in~$X$, we have~$m_{t,\tau^k(s)}=m_{\tau^k(t),s} = 2$. Hence, ~$\tau^k(s)$ belongs to~$X^\bot$. But~$b$ is a positive ribbon-$X^\bot$, then~$b\tau^k(s) =  s'b$ for some~$s'\in S$. Hence we get~$ds\D^k\D_X^{-k} = d\D^k\D_X^{-k}\tau^k(s) =  s'd\D^k\D_X^{-k}$, and therefore~$ds = s'd$. As this so for every element of~$X^\bot_k$, we deduce that ~$d$ is a positive ribbon-$X_k^\bot$. Gathering the two results we get that~$d$ is a positive ribbon-$(X_k\cup X_k^\bot)$.  

Let~$Y\subseteq S$ containing~$X_k$ and consider~$\eta(Y)$ be positive and minimal such that~$\D_Y^{\eta(Y)}$ belongs to~$Z_{A_S}(A_{X_k})$. The involution~$\tau^k$ exchanges~$X$ and~$X_k$ and~exchanges $Y$ and~$Y_k$. It follows, Firstly, that the inclusion,~$X_k\subseteq  Y$ implies  the inclusion~$X\subseteq Y_k$ and, Secondly,  
 that~$\tau^k$ send~$A_{X_k}$ and~$\D_Y$ to~$A_X$ and~$\D_{Y_k}$, respectively, with~$\eta(Y) = \varepsilon(Y_k)$. Thus,~$\D_{Y_k}^{\eta(Y)}$ belongs to~$Z_{A_S}(A_X)$ with~$\eta(Y) = \varepsilon(Y_k)$.  Then, by assumption, we have~$b\D_{Y_k}^{\eta(Y)}=  ub$, for some~$u$ in~$A_S^+$. Since ~$b\D_{Y_k}^{\eta(Y)} = d\D^k\D_X^{-k} \D_{Y_k}^{\eta(Y)} = d\D^k \D_{Y_k}^{\eta(Y)} \D_X^{-k} = d\D_{Y}^{\eta(Y)}\D^k \D_X^{-k}$ and~$ub = u d\D^k\D_X^{-k}$ we obtain that~$d\D_{Y}^{\eta(Y)} = ud$. As a consequence, replacing~$b$ and~$X$  by~$d$ and~$X_k$, respectively, we can repeat the beginning of the argument and deduce that ~$d = d_1\D\D_{X_k}^{-1}$ for some~$d_1$ in~$A_S^+$. But this lead to a contradiction to the maximality of~$k$, since  we get~$b = d\D^k\D_X^{-k} =   d_1\D\D_{X_k}^{-1}\D^k\D_X^{-k}=\D^{k+1}\D_X^{-(k+1)}$. Hence~$d = 1$ and~$b = \D^k\D_X^{-k}$.
 \end{proof}
We turn now to the proof of Theorem~\ref{theointro1}.

\begin{proof}[Proof of Theorem \ref{theointro1}]
Let~$u$ lie in $\DZ_{A_S}(A_X)$. We have~$Z_{A_X}(A_X)\subseteq Z_{A_S}(A_X)$, then~$\DZ_{A_S}(A_X)\subseteq Z_{A_S}(Z_{A_X}(A_X))$ and $u$ belongs to $Z_{A_S}(Z_{A_X}(A_X))$. Thanks to Theorem \ref{thm57}, we can write~$u=y\cdot z$, with~$yX=Xy$ and~$z\in A_X$. Write (see Proposition~\ref{reststdAT})~$y=\D^{-2m}h$ with $h$ in $A_S^+$, and decompose $h$ as~$h=abc$, with~$a,c\in A_X^+$ and~$b$ being~$X$-reduced-$X$. Since~$yX=Xy$ and $\D^2$ is in $Z(A_S)$, we have~$hX=Xh$ and so~$h\D_X=\D_Xh$. Using that $h = abc$ with~$a,c$ in $A_X^+$ and that~$\D^2_X$ lie in $Z(A_X)$, we deduce that~$b\D_X^2=\D_X^2b$ . The element~$b$ is reduced-$X$, then by Proposition~\ref{lm58}, we have~$bX=Xb$. It follows there exists~$z'\in A_X$ such that~$bcz=z'b$. Set~$x=az'$. Then, $x$  belongs to $A_X$ and~$u=\D^{-2m}\cdot x\cdot b$. Suppose~$b\neq 1$. By definition~$X^\bot\subseteq Z_{A_S}(A_X)$. Therefore, for all~$s\in X^\bot$ we have~$us=su$ and $s\D^{-2m}\cdot x\cdot b = \D^{-2m}\cdot x\cdot s b$.  By cancellation, we obtain~$bs=sb$ for all~$s\in X^\bot$. So, $b$ is a positive ribbon-$(X\cup X^\perp)$. Now, let $Y$ be included in $S$ and containing $X$. Set $\varepsilon(Y)\in\{1,2\}$ be minimal such that~$\D_Y^{\varepsilon(Y)}$ lies in $Z_{A_S}(A_X)$. Then,~$u\D_Y^{\varepsilon(Y)}=\D_Y^{\varepsilon(Y)}u$ and, as before, we get~$b\D_Y^{\varepsilon(Y)}=\D_Y^{\varepsilon(Y)} b$. As a consequence we have~$b\D_Y^{\varepsilon(Y)}\succeq b$. By Proposition \ref{prp53}, we deduce there exists~$n\in\N^*$ so that~$b=\D^n\D_X^{-n}$. Thus, we get~$u=\D^{-2m}x\D^n\D_X^{-n}$.

Assume, First, that~$\D\in \DZ_{A_S}(A_X)$ and~$\D\notin Z(A_S)$. Then, by Lemma \ref{lm518}, we have~$\D X=X\D$ and $\tau^n(x)$ belongs to $A_X$. Therefore,~$u=\D^{-2m+n}\cdot \tau^n(x)\D_X^{-n}$ and  $u$ belongs to $QZ(A_S)\cdot A_X$. So~$\DZ_{A_S}(A_X)$ is included in~$QZ(A_S)\cdot A_X$. Conversely, The assumption that~$\D$ lies in $\DZ_{A_S}(A_X)$ imposes the inclusion~$QZ(A_S)\cdot A_X\subseteq \DZ_{A_S}(A_X)$. Therefore, the latter inclusion is actually an equality. Moreover we have~$QZ(A_S)\cdot A_X=A_X\cdot QZ(A_S)$, since $\D$ belongs  to $QZ_{A_S}(A_X)$ by the above argument.

Assume, Secondly that~$\D\notin \DZ_{A_S}(A_X)$ or~$\D\in Z(A_S)$. First, the inclusion~$Z(A_S)\cdot A_X\subseteq \DZ_{A_S}(A_X)$ holds in any case. If~$\D\in Z(A_S)$ or $n$ is even, then~$u$, that is $\D^{-2m+n}x\D_X^{-n}$,  lies in  $Z(A_S)\cdot A_X$ and so the other inclusion~$\DZ_{A_S}(A_X)\subseteq Z(A_S)\cdot A_X$ holds too. Assume finally~$\D\notin \DZ_{A_S}(A_X)$. Since~$u$ lies in $\DZ_{A_S}(A_X)$, for every $w$ in~$Z_{A_S}(A_X)$  we have $wu=uw$ and therefore~$\D^{-2m}xw\D^n\D_X^{-n}=\D^{-2m}x\D^nw\D_X^{-n}$. This, in turn, imposes~$\D^nw=w\D^n$ for every $w$ in~$Z_{A_S}(A_X)$. In other words~$\D^n$ lies in $\DZ_{A_S}(A_X)$ too. Since  $\D^2$ lies in $\DZ_{A_S}(A_X)$ but not $\D$, we deduce that $n$ has to be even, and conclude by the above argument that $Z(A_S)\cdot A_X = \DZ_{A_S}(A_X)$ .   

Finally, we note that $A_X \cap QZ(A_S) = A_X \cap Z(A_S) = \{1\}$. Indeed, $X\neq S$ and $Supp(\D) = S$. Therefore, $\D^m$  does not belong to $A_X$ except if $m = 0$. Hence, we hace~$A_X\cdot QZ(A_S) = A_X\times QZ(A_S)$ and~$A_X\cdot Z(A_S) = A_X\times Z(A_S)$.
\end{proof}
\subsection{The proof of Lemma \ref{lm517}}
Here we focus on the proof of Lemma \ref{lm517}, that was postponed in the previous section. It is technical and, to help the reader, we decompose in 3 steps, namely Lemma~\ref{lm514}, Lemma~\ref{lm516} and the final argument.    
\begin{lm}\label{lm514}
Under the assumptions of Proposition \ref{prp53}, if~$t\in\partial X$ then $$b\nsucceq t\ \Leftrightarrow\ bt = t'b \textrm{ for some }t'\in S.$$
\end{lm}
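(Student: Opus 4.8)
The plan is to prove the two implications separately, with the forward direction ($b\nsucceq t \Rightarrow bt = t'b$) being the substantive one. The backward direction is almost immediate: if $bt = t'b$ with $t'\in S$, then since $b$ is reduced-$X$ and $t\in\partial X\subseteq S\setminus X$, one checks that $t' \notin X$ as well (conjugation by a positive ribbon sends $\partial X$-type generators outside the new parabolic), and then $b\succeq t$ would force $t'b = bt \succeq bt$, hence $b$ would be left-divisible in a way incompatible with $bt=t'b$ being a reduced rewriting; more directly, if $b = b't$ then $b't t = bt = t'b = t'b't$ gives $b't = t'b'$ and iterating contradicts finiteness of the length of $b$. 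I would phrase this cleanly using the cancellativity of $A_S^+$ and a length argument.

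For the forward direction, assume $b\nsucceq t$. The key tool is Lemma \ref{lm512} applied to $u = b$ and $s = t$: writing $b^{-1}tb = u_2^{-1}v_1$ for the left orthogonal splitting, we get $b = b_1 b_2$, $v_1 = s_1 b_2$, and $b_1$ is a positive $t$-ribbon-$s_1$ for some $s_1\in S$. The assumption $b\nsucceq t$ should force $b_2 = b$ (equivalently $b_1 = 1$)? No — rather, I expect the relevant dichotomy to be: either $b \succeq d_{X,t}$ (which by Proposition \ref{lm58}(i)(b) is equivalent to $b\succeq t$, using that $b$ is a positive ribbon-$(X\cup X^\bot)$ and hence a positive ribbon-$X$, so Proposition \ref{lm58}(i)(b) applies with the relevant source set), or else the "ribbon part" $b_1$ absorbs all of $b$, forcing $b^{-1}tb\in S$, i.e. $bt = t'b$. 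So the heart of the argument is: since $b$ is a positive ribbon-$X$ and $t\in\partial X$, Proposition \ref{lm58}(i)(b) gives $b\succeq t \Leftrightarrow b\succeq d_{X,t}$; combining with $b\nsucceq t$ we get $b\nsucceq d_{X,t}$, and then I must extract from Lemma \ref{lm512} that the orthogonal splitting of $b^{-1}tb$ is trivial on the left, i.e. $u_2 = 1$, so that $b^{-1}tb = v_1 \in A_S^+$, and since conjugation preserves length, $v_1$ has length one, giving $bt = t'b$.

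The main obstacle will be pinning down exactly why $b\nsucceq t$ forces the left part $u_2$ of the orthogonal splitting of $b^{-1}tb$ to be trivial. The link is that $u_2$ (by the second sentence of Proposition \ref{reststdAT}(ii)(a)) is characterized as the minimal positive element with $u_2\cdot b^{-1}tb \in A_S^+$, i.e. the obstruction to $tb$ being "already divisible enough by $b$ on the right"; concretely one shows $b_2$ from Lemma \ref{lm512} equals $b$ precisely when $b$ does not right-divide-compatibly with the elementary ribbon $d_{X,t}$. I would make this precise by invoking the ribbon machinery: $b_1$ being a positive $t$-ribbon-$s_1$ and $b = b_1 b_2$ with $b$ a positive ribbon-$X$ means $b_2$ is itself a positive ribbon-$X$ (ribbons compose, Proposition before \ref{lm58}), and then $b\succeq t$ iff $b_1 \succeq t$ iff $b_1 \neq 1$; so $b\nsucceq t$ gives $b_1 = 1$, hence $b = b_2$, hence $v_1 = s_1 b$ and $b^{-1}tb = u_2^{-1}s_1 b$ with $u_2 = 1$ (left orthogonality with $b_2 = b$ appearing on the right and nothing on the left), yielding $tb = s_1 b \cdot$—wait, rather $b^{-1}tb = s_1$, so $tb = b s_1$, which is the desired conclusion with $t' = s_1^{?}$; I need to track the left/right conventions of Lemma \ref{lm512} carefully, since it is stated for $u^{-1}su$ and I want $bt = t'b$, not $b^{-1}tb\in S$. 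This bookkeeping — aligning the side conventions and confirming $b\succeq t \Leftrightarrow b_1\neq 1$ — is the real work; everything else is assembling cited lemmas.
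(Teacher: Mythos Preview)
Your forward direction has a genuine gap: you never invoke the hypothesis $b\Delta_Y^{\varepsilon(Y)}\succeq b$, and without it the implication fails. Applying Lemma~\ref{lm512} with $u=b$ and $s=t$ gives information about $b^{-1}tb$, not about $btb^{-1}$; even if you succeeded in showing $b_2=1$ there, the conclusion would be $tb=bs_1$, which is not the statement $bt=t'b$. (You notice this yourself at the end but do not resolve it.) Moreover, your claimed chain ``$b\succeq t$ iff $b_1\succeq t$ iff $b_1\neq 1$'' is unsupported: $b_1$ being a positive $t$-ribbon-$s_1$ means $tb_1=b_1s_1$, which says nothing about $t$ right-dividing $b_1$ or $b$. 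The paper's route is essentially the only one available: set $Y=X\cup\{t\}$, use the hypothesis $b\Delta_Y^{\varepsilon(Y)}\succeq b$ together with Proposition~\ref{lm58}(ii) to get $bA_Yb^{-1}=A_{Y'}$, read off the unique $t'\in Y'\setminus X'$, and \emph{then} apply Lemma~\ref{lm512} with $s=t'$ to analyse $b^{-1}t'b\in A_Y$. Because $b$ is reduced-$X$ and $b\nsucceq t$, it is reduced-$Y$, forcing $b_2=1$, hence $bt''=t'b$ with $t''\in Y\setminus X=\{t\}$.

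Your backward direction is also broken. The iteration ``$b=b't$, hence $b't=t'b'$, and repeat'' stalls immediately: nothing forces $b'\succeq t$, so there is no next step. More fundamentally, $bt=t'b$ and $b\succeq t$ are not mutually exclusive in general (think of $b=t^n$ in a commutative situation), so any correct argument must use that $t\in\partial X$ and that $b$ is reduced-$X$. The paper's argument does exactly this: from $bt=t'b$ one gets that $b$ is a positive ribbon-$\{t\}$, hence (together with being a ribbon-$X$) a positive ribbon-$Y$ for $Y=X\cup\{t\}$; then Proposition~\ref{lm58}(i)(b) applied with this $Y$ gives $b\succeq t\Leftrightarrow b\succeq d_{Y,t}=\Delta_{Y(t)}$, and since $t\in\partial X$ the component $Y(t)$ meets $X$, so $b\succeq t$ would contradict $b$ being reduced-$X$.
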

\begin{proof}
Assume~$b\nsucceq t$. Set~$Y=X\cup\{t\}$. Under the assumptions of Proposition~\ref{prp53}, we have~$b\D_Y^{\varepsilon(Y)}\succeq b$. By Proposition~\ref{lm58}, we deduce that~$b\D_Y^{\varepsilon(Y)} b^{-1}=\D_{Y'}^{\varepsilon(Y)}$ and~$bA_Y b^{-1}=A_{Y'}$ for some subset $Y'$ of $S$. On the other hand, $b$ is a positive $X'$-ribbon-$X$ for some subset $X'$ of $S$. It follows that~$X'$ is included in~$A_{Y'}$ and, therefore, in~$Y'$. Now, the sets~$X'$ and~$Y'$ have the same cardinality as~$X$ and~$Y$, respectively. Then there exists~$t'$ in~$Y'$ so that~$Y'=X'\cup\{t'\}$. We are going to prove that~$bt = t'b$.  By Lemma~\ref{lm512}, we can decompose $b$ as $b = b_1b_2$ with $b_2$ in $A_S^+$ and $b_1$ a positive $t'$-ribbon-$t''$ for some $t''$ in $S$, so that the left orthogonal splitting of~$b^{-1}t'b$ is~$b_2^{-1}t''b_2$. By the above argument~$b^{-1}t'b$ lies in $A_{Y}$, so~$t''$ has to lie in $Y$ and $b_2$ has to lie in $A_Y^+$. But~$b$ is reduced-$Y$. Indeed, we assumed that $b$ is reduced-$X$ and~that $b\nsucceq t$ . This imposes $b_2 = 1$, $b = b_1$ and~$bt'' = t'b$ for some $t''$ in $Y$. Finally we already have $X'b = bX$. Since $t'$ does not belong to $X'$, It follows that $t''$ cannot lie in $X$. Thus $t'' = t$ and we are done.     

Conversely,  Assume~$bt\succeq b$, then~$b$ is a positive ribbon-$\{t\}$.  Since it is a positive ribbon-$X$, it is  also a positive ribbon-$Y$. Denote by $Y(t)$ the irreducible component of $Y$ that contains $t$. Since $t$ lies in $\partial(X)$, $Y(t)$ contains some element of $X$. By Proposition~\ref{lm58} (i)(b) if $t$ is a right-divisor of $b$ then so are all the element of $Y(t)$. But  ~$b$ is reduced-$X$. Thus $t$ does not right-divide $b$.
\end{proof}
Note that we showed the above result without using the assumption:~$b$ is a positive ribbon-$X^\bot$. This hypothesis is then useless for Lemma \ref{lm514}.
\begin{lm}\label{lm516}
Under the assumptions of proposition \ref{prp53}, we have $$Supp(b)=S.$$
\end{lm}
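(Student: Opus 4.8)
The plan is to show that every generator $s \in S$ lies in $Supp(b)$, by a connectedness argument on the Coxeter graph $\Gamma_S$, which is connected since $A_S$ is irreducible. I already know two facts about $b$: it is a nontrivial positive ribbon-$(X\cup X^\bot)$ that is reduced-$X$, and by Lemma~\ref{lm514}, for $t \in \partial X$ we have the dichotomy $b \nsucceq t \Leftrightarrow bt = t'b$ for some $t' \in S$. The first step is to pin down $Supp(b) \cap X$. Since $b$ is a positive ribbon-$X$ and reduced-$X$, and $b \neq 1$, I expect $b$ to have at least one right-divisor in $S$; that right-divisor cannot lie in $X$ (as $b$ is reduced-$X$), and via the ribbon property $X'b = bX$ together with Lemma~\ref{lm58}(i)(b), right-divisibility by an element of $X$ would force $b$ to be divisible by a whole connected component of $X$, contradicting reduced-$X$. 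So the right-divisors of $b$ among $S$ all lie outside $X$, and in fact I want to argue that $b$ is divisible on the right by some $t \in \partial X$ (or that $Supp(b)$ meets $\partial X$).

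The core of the argument: take any $t \in \partial X$. By Lemma~\ref{lm514}, either $b \succeq t$, in which case $t \in Supp(b)$, or $bt = t'b$ for some $t' \in S$. In the latter case, I want to use the ribbon machinery to conclude that $Supp(b) = Supp(t'b) = Supp(bt)$, but since $b$ already conjugates a parabolic to a parabolic of the same support-size, and $t \notin X \cup X^\bot$... Actually the cleaner route is: since $b$ is a positive ribbon-$(X\cup X^\bot)$ with $b\D_Y^{\varepsilon(Y)}\succeq b$ for all $Y \supseteq X$, and since $\Delta_{X \cup X^\bot}$ or some power lies in $Z_{A_S}(A_X)$ (as $X^\bot$ centralizes $A_X$), I can apply Lemma~\ref{lm59}: if $d_{X,t} \preceq$ (right-divides) $b$ for $t \in \partial X$, then $Supp(b) \supseteq Supp(d_{X,t}) = X(t) \cup \{t\}$, pulling in new generators. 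By Proposition~\ref{lm58}(i)(b), $b \succeq t \Leftrightarrow b \succeq d_{X,t}$ (applied to $b$ as a ribbon for the appropriate component). So the dichotomy of Lemma~\ref{lm514} becomes: for $t \in \partial X$, either $b \succeq d_{X,t}$ and hence $X \cup \{t\} \subseteq Supp(b)$ by Lemma~\ref{lm59}, or $bt = t'b$ with $t' \in S$.

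To finish, I would run an induction/propagation argument. Let $T = Supp(b)$. I claim $T$ is ``closed'' in the sense that if $s \in T$ and $m_{s,s'} \neq 2$ then $s' \in T$; combined with connectedness of $\Gamma_S$ and $T \neq \emptyset$ this gives $T = S$. To see closedness, suppose $s \in T$. If $s \in X$ then (by the first step) $Supp(b)$ already contains a full component of $X$ meeting $\partial X$ via the $d_{X,t}$ divisibility, so neighbors get pulled in; if $s \notin X$, then $s$ is either a right-divisor of $b$ or appears inside $b$, and the ribbon relation $bt = t'b$ moves right-divisors around $S$ without shrinking the support. The delicate point — and I expect this to be the main obstacle — is handling the alternative $bt = t'b$: I must show that this conjugation cannot ``trap'' $Supp(b)$ in a proper subgraph, i.e. that $t'$ connects back into $T$ or that iterating eventually forces some $d_{X,t'}$ to right-divide $b$. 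Here I would use that $b$ is \emph{nontrivial} and reduced-$X$ together with the maximality-type hypothesis $b\D_Y^{\varepsilon(Y)} \succeq b$, arguing that if $Supp(b)$ were a proper connected subset $T \subsetneq S$, then $b$ would be a positive ribbon confined to $A_T$, forcing $b$ to right-divide some $\D_T^{\varepsilon}$ lying in $Z_{A_S}(A_X)$ — but $b \neq 1$ and the boundary $\partial T$ is nonempty (by connectedness of $\Gamma_S$), so picking $t \in \partial T \cap \partial X$ or chaining through $\partial X$ yields a contradiction with reduced-$X$ exactly as in Lemma~\ref{lm514}. Assembling these pieces gives $Supp(b) = S$.
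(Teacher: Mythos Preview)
Your plan has the right ingredients---the dichotomy of Lemma~\ref{lm514}, Proposition~\ref{lm58}(i)(b), and Lemma~\ref{lm59}---but the propagation step does not go through as written, and the missing idea is not a minor detail.

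The gap is this: you only ever apply Proposition~\ref{lm58}(i)(b) and Lemma~\ref{lm59} with $X$ as the parabolic. That tells you, for $t\in\partial X$, either $X(t)\cup\{t\}\subseteq Supp(b)$ or $bt=t'b$. In the second alternative you have nothing forcing $t$ into $Supp(b)$, and your fallback argument (``$b$ confined to $A_T$, $b$ right-divides some $\Delta_T^\varepsilon$, pick $t\in\partial T\cap\partial X$'') fails on several counts: $Supp(b)$ need not be connected; elements of $A_T^+$ divide $\Delta_T^k$ but need not \emph{right}-divide any specific $\Delta_T^\varepsilon$; and $\partial T\cap\partial X$ may well be empty. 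So your closedness claim for $T=Supp(b)$ is unsupported.

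The paper's key move is to apply the ribbon lemmas to an \emph{auxiliary} parabolic, not to $X$. Assume $Supp(b)\neq S$, take a connected component $U$ of $Supp(b)$ and $u\in\partial U$ (so $u\notin Supp(b)$). Pick any right-divisor $s\in U$ of $b$ and a path $u=u_0,u_1,\ldots,u_n=s$ in $\Gamma_{U\cup\{u\}}$, shortened so that none of $u_0,\ldots,u_{n-1}$ right-divides $b$. Now observe that $b$ is a positive ribbon-$\{u_i\}$ for every $i<n$: if $u_i\in X\cup X^\bot$ this is the hypothesis, and if $u_i\in\partial X$ it follows from Lemma~\ref{lm514} since $b\nsucceq u_i$. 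Hence $b$ is a ribbon-$U'$ where $U'=\{u_0,\ldots,u_{n-1}\}$. Since $\Gamma_{U'}$ is connected, $u_n\in\partial U'$, and $b\succeq u_n$, Proposition~\ref{lm58}(i)(b) gives $b\succeq d_{U',u_n}$, and Lemma~\ref{lm59} gives $U'\cup\{u_n\}\subseteq Supp(b)$---in particular $u\in Supp(b)$, a contradiction. The point you missed is that the set to which one feeds Proposition~\ref{lm58} and Lemma~\ref{lm59} is this path $U'$, manufactured so that $b$ is a ribbon for it and has a right-divisor on its boundary.
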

\begin{proof}
By assumption $b\neq 1$, so its support is not empty. Assume by contradiction that~$Supp(b)\neq S$. Let~$U$ be an indecomposable component of~$Supp(b)$. Fix~$u$ in $\partial U$ and set $V = Supp(b)\setminus U$. By hypothesis~$u$ does not lie in $Supp(b)$.  Then,~$u$ does not right-divide~$b$.  We claim that $bub^{-1}$ lies in $A^+_S$. Indeed,~$b$ is a positive ribbon-$X\cup X^\bot$  so if~$u$ belongs to $X\cup X^\bot$ there is nothing to say; if~$u$ lies in $\partial X$, then~$bu\succeq b$ by Lemma \ref{lm514}. Now, the set~$U$ is an indecomposable component of~$Supp(b)$, then each element of $U$ commute with each element of~$V$ and we can write $b = b_2b_1$ with~$b_1\in A_U^+$, and~$b_2\in A_{V}^+$. Write $b_1 = b'_1s$ with $s\in U$. Since~$U\cup\{u\}$ is indecomposable, there exists~$u_1,\dots,u_n\in U$ such that $u_0 = u$, $u_n =s$ and~$m_{u_iu_{i+1}}>2$.  Up to replacing $s$ by some $u_i$ with $i<n$, we can assume that $b$ has no right-divisor among~$u_1,\dots,u_{n-1}$.
\begin{center}
\begin{tikzpicture}[decoration={brace}]
\draw[very thick,fill=black] (2,0) circle (.1cm);
\draw[very thick,fill=black] (3.5,0) circle (.1cm);
\draw[very thick,fill=black] (6,-.6) circle (.1cm);
\draw[very thick] (2,0) -- +(-1,0);
\draw[very thick] (2,0) -- +(1.5,0);
\draw[dotted,very thick] (3.5,0) -- +(1,.4);
\draw[dotted,very thick] (3.5,0) -- +(2.5,-.6);
\draw[dotted,very thick] (1,0) -- +(-1,.4);
\draw[dotted,very thick] (1,0) -- +(-1,-.4);
\draw (2,.35) node {$u$};
\draw (3.7,.35) node {$u_1$};
\draw (5.5,.2) node {{\large$U$}};
\draw (6.45,-.5) node {$u_n$};
\draw (5.2,0) ellipse (2.1 cm and 1cm);
\end{tikzpicture}
\end{center}
 Set~$U'=\{u,u_1,\dots,u_{n-1}\}$.  Let~$u_i$ lies in $U'$. If  $u_i$ does not lies in $X\cup X^\bot$, then~$u_i$ belongs to $\partial X$ and, by Lemma \ref{lm514},~$bu_i = v_i b$  for some $v_i$ in $S$. On the other hand~$b$ is a positive ribbon-$(X\cup X^\bot)$ therefore $b$ is also a positive ribbon-$U'$. The graph~$\Gamma_{U'}$ is connected by definition,~$u_n$ lies in $\partial U'$ and  right-divides $b$. Then by Proposition~\ref{lm58}, the positive elementary ribbon~$d_{U',u_n}$ right-divides $b$. Applying Lemma \ref{lm59}, we get that~$U'$ is contained in the support of $b$. Hence $u$ belongs to $Supp(b)$, a contradiction. So $Supp(b) = S$.
\end{proof}

We are now ready to prove Lemma \ref{lm517}
\begin{proof}[proof of Lemma \ref{lm517}]
Let~$s\in S\setminus X$, and set~$Y=S\setminus\{s\}$. Write~$b=b_1b_2$ with~$b_2\in A_Y^+$ and~$b_1$ reduced-$Y$.   By Lemma \ref{lm516}, we have~$Supp(b)=S$.  Since $b_2$ lies in $A_Y^+$, it follows that $b_1\neq 1$. In addition, $b_1$ is reduced-$Y$. Then, $s$ has to right-divide $b_1$.  We have~$b\D_Y^2b^{-1}=b_1\D_Y^2b_1^{-1}$. According to the assumptions of Proposition~\ref{prp53}, we have~$b\D_Y^2  = zb$ for some $z$ in $A^+_S$. Indeed, if~$\varepsilon(Y)=1$ then~$b\D_Y=z_1b$ for some~$z_1\in A_S^+$. Therefore~$b\D_Y^2=z_1b\D_Y=z_1^2b$. By Proposition~\ref{lm58}, we deduce that $b_1$ is a $Y'$-ribbon-$Y$ for some $Y'\subseteq S$ and $b = b_1b_2 = b'_2b_1$ with $b_2'\in A_{Y'}^+$. Since $s$ right-divides $b_1$, it has also to right-divide $b$.
\end{proof}
\subsection{When~$\Gamma_S$ is not connected}
In  Theorem~\ref{theointro1} we consider irreducible Artin-Tits group~$A_S$ of spherical type. Here we extend the theorem to any spherical type Artin-Tits group~$A_S$.

\begin{thm}\label{thm59}
Let~$A_S$ be an Artin-Tits group of spherical type. Denote the indecomposable components of~$S$  by~$S_1,\dots,S_n$.  Let~$A_X$ be a standard parabolic subgroup of~$A_S$ and set~$X_i=X\cap S_i$ for all~$i$. Set $$I=\{1\leq i\leq n\mid X_i\neq S_i, \D_{S_i}\in \DZ_{A_{S_i}}(A_{X_i})\textrm{ and }\D_{S_i}\notin Z(A_{S_i})\}.$$  $$J = \{1\leq i\leq n\mid X_i\neq S_i,\textrm{ and }  i\not\in I\}.$$ Finally, set $S_I = \bigcup_{i\in I} S_i$ and  $S_{J}  = \bigcup_{i \in J} S_i$. Then we have

$$\DZ_{A_S}(A_X)  = A_X\times QZ(A_{S_I})\times Z(A_{S_J}).$$

\end{thm}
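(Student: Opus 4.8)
The plan is to reduce Theorem~\ref{thm59} to the irreducible case already handled by Theorem~\ref{theointro1}, using the direct-product decomposition $A_S = A_{S_1}\times\cdots\times A_{S_n}$ that comes from the fact that the $S_i$ are the indecomposable components of $S$. The basic principle is that centralizers in a direct product decompose coordinate-wise: for a subgroup $H = H_1\times\cdots\times H_n$ of a direct product $G = G_1\times\cdots\times G_n$ with each $H_i\le G_i$, one has $Z_G(H) = \prod_i Z_{G_i}(H_i)$, and hence $\DZ_G(H) = \prod_i \DZ_{G_i}(H_i)$. Since $A_X = \prod_i A_{X_i}$ with $A_{X_i}\le A_{S_i}$, this immediately gives $\DZ_{A_S}(A_X) = \prod_{i=1}^n \DZ_{A_{S_i}}(A_{X_i})$.

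Next I would evaluate each factor $\DZ_{A_{S_i}}(A_{X_i})$ according to which of three cases the index $i$ falls into. If $X_i = S_i$, then $\DZ_{A_{S_i}}(A_{S_i}) = A_{S_i} = A_{X_i}$, contributing nothing beyond $A_{X_i}$. If $X_i \subsetneq S_i$, then $A_{S_i}$ is irreducible of spherical type and Theorem~\ref{theointro1} applies: for $i\in I$ we get $\DZ_{A_{S_i}}(A_{X_i}) = A_{X_i}\times QZ(A_{S_i})$, and for $i\in J$ we get $\DZ_{A_{S_i}}(A_{X_i}) = A_{X_i}\times Z(A_{S_i})$. Substituting these into the product and regrouping the $A_{X_i}$ factors together gives
$$\DZ_{A_S}(A_X) = \Bigl(\prod_i A_{X_i}\Bigr) \times \Bigl(\prod_{i\in I} QZ(A_{S_i})\Bigr) \times \Bigl(\prod_{i\in J} Z(A_{S_i})\Bigr) = A_X \times QZ(A_{S_I}) \times Z(A_{S_J}),$$
using that $QZ$ and $Z$ of a direct product of Artin-Tits groups on disjoint supports are the corresponding direct products (which follows from Proposition~\ref{reststdAT} applied componentwise, or directly), and that $QZ(A_{S_I}) = \prod_{i\in I}QZ(A_{S_i})$, $Z(A_{S_J}) = \prod_{i\in J}Z(A_{S_i})$.

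The main point requiring care — and the only genuine obstacle — is the very first claim that centralizers distribute over the direct product, i.e. that there is no ``mixing'' of coordinates. Concretely, one must check that if $g = (g_1,\dots,g_n)$ centralizes $A_X = \prod A_{X_i}$, then each $g_i$ centralizes $A_{X_i}$; this is immediate from componentwise multiplication in a direct product. Iterating once more handles the double centralizer. A secondary technical point is the disjointness of the resulting product $A_X\cdot QZ(A_{S_I})\cdot Z(A_{S_J})$: since the three factors live in pairwise ``independent'' coordinate blocks only partially — $A_X$ overlaps every block — one needs that within block $i$, $A_{X_i}\cap QZ(A_{S_i}) = A_{X_i}\cap Z(A_{S_i}) = \{1\}$ whenever $X_i\subsetneq S_i$, which was already established at the end of the proof of Theorem~\ref{theointro1} via $\mathrm{Supp}(\D_{S_i}) = S_i$. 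Assembling these observations, all of which are routine once the product structure is invoked, yields the stated equality.
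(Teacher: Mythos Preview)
Your proposal is correct and follows essentially the same route as the paper: decompose $A_S$ and $A_X$ componentwise along the $S_i$, observe that (double) centralizers split accordingly, apply Theorem~\ref{theointro1} in each irreducible factor with $X_i\subsetneq S_i$, and regroup using $\prod_i A_{X_i}=A_X$, $\prod_{i\in I}QZ(A_{S_i})=QZ(A_{S_I})$, $\prod_{i\in J}Z(A_{S_i})=Z(A_{S_J})$. Your formulation of the coordinate-wise centralizer principle (stated only for subgroups of the form $H_1\times\cdots\times H_n$) is in fact slightly more careful than the paper's, which states it for an arbitrary subgroup $H$ with $H_i=H\cap G_i$; this would not hold in general, but is fine here since $A_X$ really is such a product.
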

\begin{proof}
For any direct product of groups~$G=G_1\times\cdots\times G_n$  and a subgroup $H$ of $G$ we have $$Z_G(H)=Z_{G_1}(H_1)\times\cdots\times Z_{G_n}(H_n).$$ where $H_i = H\cap G_i$ for each $i$.
Here, $A_S  = A_{S_1}\times\cdots\times A_{S_n}$ and $A_X \cap A_{S_i}  = A_{X_i}$.
Now by Theorem~\ref{theointro1}, if $i$ lies in $I$, then  $\DZ_{A_{S_i}}(A_{X_i}) = A_{X_i}\times QZ(A_{S_i})$;  if $i$ lies in $J$ then $\DZ_{A_{S_i}}(A_{X_i}) = A_{X_i}\times Z(A_{S_i})$. In addition, if $i$ is neither in $I$ nor in $J$, then $X_i = S_i$ and $\DZ_{A_{S_i}}(A_{X_i}) = A_{X_i}$. So, we deduce that
$$\DZ_{A_S}(A_X)  = Z_{A_S}(\prod_{i = 1}^nZ_{A_{S_i}}(A_{X_i})) = \prod_{i = 1}^n \DZ_{A_{S_i}}(A_{X_i})    =$$ $$\prod_{i\in I} (A_{X_i}\times QZ(A_{S_i})) \times\prod_{i\in J} (A_{X_i}\times Z(A_{S_i}))\times \prod_{i\not\in I\cup J} A_{X_i} = $$ $$ \prod_{i = 1}^n A_{X_i}\times \prod_{i\in I} QZ(A_{S_i})\times\prod_{i\in J}  Z(A_{S_i}).$$ But $\prod_{i = 1}^n A_{X_i} = A_X$, $\prod_{i\in I} QZ(A_{S_i}) = QZ(A_{S_I})$ and $\prod_{i\in J} QZ(A_{S_i}) = QZ(A_{S_J})$. So the equality holds.
 \end{proof}

\subsection{Application the subgroup conjugacy problem}
Given a group~$G$ and a subgroup~$H$ of~$G$, the subgroup conjugacy problem for~$H$ is solved by finding an algorithm that determines whether any two given elements of~$G$ are conjugated by an element of~$H$. In this section, we focus on Artin-Tits groups of type~$B$ or~$D$ and use Theorem~\ref{theointro1} and \cite[Theorem 1.1]{Par2} to reduce the subgroup conjugacy problem for their irreducible standard parabolic subgroups to an instance of the simultaneous conjugacy problem. We follow the strategy used in \cite{GKLT} to solve  the subgroup conjugacy problem for irreducible standard parabolic subgroups of an Artin-Tits group of type $A$. The simultaneous conjugacy problem is solved  for Artin-Tits groups of type~$A$ in \cite{Men3} (see also  \cite{LeeLee}), but the result and its proof can be generalized verbatim to all Artin-Tits groups of spherical type, in particular to Artin-Tits groups of type $B$ or type~$D$ . Hence, we obtain a solution to the subgroup conjugacy problem for irreducible standard parabolic subgroups of Artin-Tits groups of type $B$ and $D$.\\

Let us recall \cite[Theorem 1.1]{Par2} and \cite[Theorem 2.13]{GKLT}.
\begin{thm}[\cite{Par2}, Theorem 1.1]\label{thm60}
Let~$A_S$ be an Artin-Tits group of spherical type such that~$\Gamma_S=A_k~(k\geq1)$,~$\Gamma_S=B_k~(k\geq2)$ or~$\Gamma_S=D_k~(k\geq4)$. Let~$X\subseteq S$ such that~$\Gamma_X$ is connected. Then~$Z_{A_S}(A_X)$ is generated by $$X^\perp\cup\{\D_Y\in Z_{A_S}(A_X) \mid X\subseteq Y\}\cup\{\D_Y\D_{Y'}\in Z_{A_S}(A_X) \mid X\subseteq Y,X\subseteq Y' \}.$$
\end{thm}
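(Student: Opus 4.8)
The statement to prove is Theorem~\ref{thm60}, which is Theorem 1.1 of \cite{Par2}. Since this is cited as an external result, a ``proof proposal'' here should describe how one would reconstruct or adapt Parisse--Godelle's argument using the ribbon machinery developed in Section~\ref{sectribbon}.

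\medskip

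The plan is to prove both inclusions. The easy direction is that every listed generator lies in $Z_{A_S}(A_X)$: elements of $X^\perp$ commute with every generator of $A_X$ by definition of $X^\perp$; and the elements $\D_Y$ or $\D_Y\D_{Y'}$ are explicitly required to lie in $Z_{A_S}(A_X)$, so there is nothing to check. The substance is the reverse inclusion: every element of $Z_{A_S}(A_X)$ is a product of the listed generators. First I would take $u\in Z_{A_S}(A_X)$ and put it in a normal form adapted to $A_X$. By Theorem~\ref{thm57} (which applies since $A_S$ is of spherical type), $u = y\cdot z$ with $y\in QZ_{A_S}(A_X)$, hence $yX=Xy$, and $z\in A_X$; but since $u$ centralizes $A_X$ and $z\in A_X$, we can absorb and reduce to showing the claim for $y$, i.e.\ for $y\in Z_{A_S}(A_X)\cap N_{A_S}(A_X)$ — actually one shows $z\in Z(A_X)$, and $Z(A_X)$ is generated by the relevant $\D_Y$'s with $X\subseteq Y\subseteq X$, i.e.\ by $\D_X$ or $\D_X^2$, which is on the list. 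So the heart of the matter is the quasi-central part. Writing $y = \D^{-2m}h$ with $h\in A_S^+$ (Proposition~\ref{reststdAT}) and decomposing $h=abc$ with $a,c\in A_X^+$ and $b$ reduced-$X$, exactly as in the proof of Theorem~\ref{theointro1}, one reduces to analysing a positive, reduced-$X$ element $b$ with $bX=Xb$ that moreover commutes with $\D_X^2$ and with every $\D_Y^{\varepsilon(Y)}$ lying in $Z_{A_S}(A_X)$.

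\medskip

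The next step is to show such a $b$ is, up to the central/parabolic factors already accounted for, a product of the listed ribbon-type generators. Here I would argue by induction on the length of $b$. Since $b$ is a positive ribbon-$X$ (Proposition~\ref{lm58}(i) applied in reverse, using $bX=Xb$ and $b$ reduced-$X$), by the factorization of positive ribbons into elementary ones (the Proposition following Definition~\ref{defribb}) one can peel off an elementary ribbon $d_{X',t}$ from the right. The commutation of $b$ with the various $\D_Y^{\varepsilon(Y)}$ forces, via Proposition~\ref{lm58}(i)(b) and Lemma~\ref{lm514}-style arguments, that the relevant $t$ either lies in $X^\perp$ — giving a generator $d_{X,t}=t\in X^\perp$ on the list — or is such that the corresponding $\D_Y$ or $\D_Y\D_{Y'}$ lies in $Z_{A_S}(A_X)$, in which case the elementary ribbon is a (sub)factor of such a $\D_Y$ or $\D_Y\D_{Y'}$ and, combined with the inductive hypothesis on the remaining shorter positive ribbon, yields the desired decomposition. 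One has to be a little careful: the elementary ribbon $d_{X',t}$ is attached to the set $X'$ that $b$ transports $X$ to, not to $X$ itself, so one tracks the conjugating ribbons and uses that $b$ itself is a ribbon-$X$ to pull everything back to $X$, just as in the proof of Proposition~\ref{prp53} with the $\tau^k$ bookkeeping.

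\medskip

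The main obstacle is exactly this last combinatorial bookkeeping: showing that the elementary-ribbon factors of a $Z_{A_S}(A_X)$-centralizing positive ribbon $b$ are constrained to be the ``symmetric'' ones, i.e.\ either orthogonal generators $t\in X^\perp$ or pieces of the central Garside-type elements $\D_Y,\D_Y\D_{Y'}$ that happen to centralize $A_X$. This is where the restriction to types $A_k$, $B_k$, $D_k$ enters — in these types the centralizer of a connected parabolic is particularly simple, essentially because the only ribbons that can act are either orthogonal or realize a symmetry of $\Gamma_S$ compatible with $X$ — whereas in general types (e.g.\ $E_6$, $F_4$, $H_k$) there can be further ribbon elements. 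Concretely I expect to invoke the classification of the irreducible spherical Coxeter graphs \cite{Bou} and a case check that in types $A,B,D$ every positive ribbon-$X$ with $bX=Xb$ for connected $\Gamma_X$ is generated by $X^\perp$ together with the $\D_Y$'s and $\D_Y\D_{Y'}$'s — which is the content of \cite{Par2} and is the step I would not grind through here.
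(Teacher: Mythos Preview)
The paper does not prove this statement at all: Theorem~\ref{thm60} is quoted verbatim from \cite{Par2} (Paris, not ``Parisse'') and is used as a black box in Corollary~\ref{cor51}. There is therefore no ``paper's own proof'' to compare your proposal against.

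That said, your sketch is a reasonable outline of how one might approach such a result with the ribbon machinery, and you correctly identify that the substantive content --- the type-by-type verification that in $A_k$, $B_k$, $D_k$ every positive $X$-ribbon-$X$ centralizing $A_X$ decomposes into the listed generators --- is precisely what \cite{Par2} does and what you explicitly decline to carry out. In that sense your proposal is not a proof but a reduction back to the cited source, which is circular. If the goal were an independent proof, the genuine gap is exactly the final case analysis you defer: the restriction to types $A$, $B$, $D$ is not cosmetic, and the argument in \cite{Par2} relies on explicit computations with the Coxeter graphs of those types that are not recoverable from the general ribbon lemmas (Proposition~\ref{lm58}, Lemma~\ref{lm514}) alone. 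Also note that the decomposition $u=yz$ via Theorem~\ref{thm57} gives $y\in QZ_{A_S}(A_X)$, not automatically $y\in Z_{A_S}(A_X)$; your subsequent claim that $z\in Z(A_X)$ needs the observation that $u$ centralizes $A_X$ while $y$ only quasi-centralizes, so $z=y^{-1}u$ conjugates $X$ by the same permutation as $y^{-1}$, which must then be trivial on each irreducible component of $X$ for $z\in A_X$ to centralize --- this step is fine but you glossed it.
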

Note that in the third set, we can restrict the pair $(Y,Y)$ to those so that neither $\D_Y$ nor $\D_{Y'}$ belong to $Z_{A_S}(A_X)$. In the sequel, we denote the obtained generating set  by $\Upsilon(X)$. 
\begin{ex}\label{ex61}
Consider $S = \{s_1,s_2,s_3\}$ with~$A_S$ of type~$B_3$ as below. Set $X = \{s_2\}$. 
\begin{center}
\begin{tikzpicture}
\draw[very thick,fill=black] (2,-1.5) circle (.1cm);
\draw[very thick,fill=black] (3.5,-1.5) circle (.1cm);
\draw[very thick,fill=black] (5,-1.5) circle (.1cm);
\draw[very thick] (2,-1.5) -- +(3,0);
\draw (2,-1.9) node {$s_1$};
\draw (3.5,-1.9) node {$s_2$};
\draw (5,-1.9) node {$s_3$};
\draw (2.75,-1.23) node {$4$};
\draw (4.5,-2.5) node {$X$};
\draw (3.5,-1.7) ellipse (.5cm and .8cm);
\draw[-stealth] (4.3,-2.3) -- +(-.5,.5);
\end{tikzpicture}
\end{center}~
We have $X^\bot=\emptyset$ and  $Z_{A_S}(A_X)$ is generated by $\Upsilon(X) = \{ s_2, \Delta_{\{s_1,s_2\}}, \Delta_S, \Delta^2_{s_2,s_3}\}$. 
\end{ex}
\begin{thm}[\cite{GKLT}, Theorem 2.13]\label{thm61}
Let~$G$ be a group and~$H$ be a subgroup such that~$\DZ_G(H)=Z(G)\cdot H$. Suppose further that~$Z_G(H)$ is generated by a set~$\{g_1,\dots,g_n\}$. Then for~$x,y\in G$, the following are equivalent:
\begin{enumerate}
\item there exists~$c\in H$ such that~$y=c^{-1}xc$.
\item there exists~$z\in G$ such that
\begin{enumerate}
\item~$y=z^{-1}xz$, and
\item[($b_i$)]~$g_i=z^{-1}g_iz$ for all~$1\leq i\leq n$.
\end{enumerate}
\end{enumerate}
\end{thm}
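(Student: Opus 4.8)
The plan is to prove the two implications separately, with the direction $(ii)\Rightarrow(i)$ being the substantial one. The implication $(i)\Rightarrow(ii)$ is immediate: if $y = c^{-1}xc$ with $c\in H$, then taking $z = c$ gives condition $(a)$ directly, while condition $(b_i)$ holds because $c\in H$ and each $g_i\in Z_G(H)$, so $g_i c = c g_i$, i.e. $g_i = c^{-1}g_i c = z^{-1}g_i z$.

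For $(ii)\Rightarrow(i)$, suppose $z\in G$ satisfies $(a)$ and all the $(b_i)$. First I would observe that the conditions $(b_i)$ say precisely that $z$ centralizes each generator $g_i$ of $Z_G(H)$, hence $z$ centralizes the whole subgroup $Z_G(H)$; in other words $z\in Z_G(Z_G(H)) = \DZ_G(H)$. Now invoke the hypothesis $\DZ_G(H) = Z(G)\cdot H$: we may write $z = \zeta\, c$ with $\zeta\in Z(G)$ and $c\in H$. Then for the conjugation in $(a)$ the central factor $\zeta$ is invisible: $z^{-1}xz = c^{-1}\zeta^{-1}x\zeta c = c^{-1}xc$, since $\zeta\in Z(G)$ commutes with $x$. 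Therefore $y = z^{-1}xz = c^{-1}xc$ with $c\in H$, which is exactly $(i)$.

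The argument is short and the only genuine ingredient is the double-centralizer hypothesis $\DZ_G(H) = Z(G)\cdot H$, which is what lets one absorb $z$ into $H$ up to a central element that then drops out of the conjugation. So there is no real obstacle here; the statement is a clean abstract consequence of the centralizer-theorem-type equality, and its interest lies entirely in the fact that, by Theorem~\ref{theointro1} (together with Theorem~\ref{thm60} providing an explicit finite generating set $\Upsilon(X)$ of $Z_{A_S}(A_X)$), the hypotheses are met for irreducible standard parabolic subgroups of Artin-Tits groups of type $A$, $B$ and $D$ with $\D\notin\DZ_{A_S}(A_X)$ or $\D\in Z(A_S)$, so that the subgroup conjugacy problem reduces to the simultaneous conjugacy of the finite tuple $(x, g_1,\dots,g_n)$.
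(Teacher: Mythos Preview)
Your proof is correct. Note, however, that the paper does not give its own proof of Theorem~\ref{thm61}: it is quoted verbatim from \cite[Theorem~2.13]{GKLT} and used as a black box in the proof of Corollary~\ref{cor51}. Your argument is exactly the natural one --- $(i)\Rightarrow(ii)$ by taking $z=c$ and using $c\in H$, $g_i\in Z_G(H)$; $(ii)\Rightarrow(i)$ by observing that the $(b_i)$ force $z\in Z_G(Z_G(H))=\DZ_G(H)=Z(G)\cdot H$, then writing $z=\zeta c$ and dropping the central factor $\zeta$ from the conjugation --- and this is precisely how one proves the cited result. Your closing remarks about the role of Theorems~\ref{theointro1} and~\ref{thm60} in verifying the hypotheses are also accurate and match the paper's application in Corollary~\ref{cor51}.
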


\begin{cor}\label{cor51}
Let~$A_S$ be an Artin-Tits group of type~$B_k~(k\geq2)$ or~$D_k~(k\geq4)$. Let~$X\subseteq S$ be such that~$\Gamma_X$ is connected. In case~$\Gamma_S$ is of type~$D_{2k+1}$,  assume~$\{s_2,s_{2'},s_3\}$ is not included in $X$ with the notations of Figure \ref{figuretypeD}. For any pair~$(x,y)$ of elements of~$A_S$, the following are equivalents:
\begin{enumerate}
\item there exists~$c\in A_X$ such that~$y=c^{-1}xc$.
\item there exists~$z\in A_S$ such that
\begin{enumerate}
\item $y=z^{-1}xz$,
\item $g = z^{-1}g z$ for all~$g$ in $\Upsilon(X)$.
\end{enumerate}
\end{enumerate}
\end{cor}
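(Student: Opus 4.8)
The plan is to deduce Corollary~\ref{cor51} by applying Theorem~\ref{thm61} with $G = A_S$ and $H = A_X$, after verifying its two hypotheses. The first hypothesis, that $Z_{A_S}(A_X)$ is finitely generated, is exactly the content of Theorem~\ref{thm60}: since $\Gamma_X$ is connected, $Z_{A_S}(A_X)$ is generated by the finite set $\Upsilon(X)$. The second hypothesis, that $\DZ_{A_S}(A_X) = Z(A_S)\cdot A_X$, is where I need to invoke the spherical-type result of this section. Since $\Gamma_X$ is connected, $A_X$ is irreducible, so Theorem~\ref{theointro1} applies and tells us that either $\DZ_{A_S}(A_X) = A_X \times QZ(A_S)$ — when $\Delta \in \DZ_{A_S}(A_X)\setminus Z(A_S)$ — or $\DZ_{A_S}(A_X) = A_X \times Z(A_S)$ otherwise.

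So the crux is to show that, under the stated hypotheses on the type of $\Gamma_S$ and the position of $X$, we are in the second case, i.e. $\Delta$ does not lie in $\DZ_{A_S}(A_X)\setminus Z(A_S)$. This is precisely governed by Proposition~\ref{condtech}: the bad case ``$\Delta \in \DZ_{A_S}(A_X)$ but $\Delta \notin Z(A_S)$'' occurs exactly when $\Gamma_S$ is of type $D(2k+1)$ with $X \supseteq \{s_2, s_{2'}, s_3\}$, or $\Gamma_S$ is of type $E_6$ with $X = \{s_2,\dots,s_6\}$. Since the corollary restricts to $\Gamma_S$ of type $B_k$ or $D_k$, the $E_6$ case is automatically excluded; and the hypothesis that $\{s_2,s_{2'},s_3\}$ is not included in $X$ when $\Gamma_S$ is of type $D_{2k+1}$ rules out the remaining bad case. (For $D_{2k}$, the permutation $\tau$ associated with $\Delta$ is the identity, so $\Delta \in Z(A_S)$ and there is nothing to exclude.) Hence in all cases allowed by the corollary we have $\DZ_{A_S}(A_X) = A_X \times Z(A_S) = Z(A_S)\cdot A_X$.

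With both hypotheses of Theorem~\ref{thm61} verified, I apply it with $\{g_1,\dots,g_n\} = \Upsilon(X)$: conditions (i) and (ii) of that theorem translate verbatim into conditions (i) and (ii) of the corollary, where ($b_i$) ranges over $g \in \Upsilon(X)$. This completes the proof. I do not expect a genuine obstacle here — the work is entirely in the earlier Proposition~\ref{condtech} and Theorem~\ref{theointro1}, and this corollary is a bookkeeping assembly. The one point that requires a moment's care is matching the case distinction of the corollary's hypothesis to the dichotomy of Theorem~\ref{theointro1} via Proposition~\ref{condtech}, and noting that type $B_k$ never produces the pathological double centralizer (there $\tau$ is the identity, so $\Delta^2$, and in fact $\Delta$ when relevant, lies in the center, and $QZ(A_S) = \langle \Delta\rangle$ with $Z(A_S)$ of index at most $2$ — but the point $\Delta \notin \DZ_{A_S}(A_X)\setminus Z(A_S)$ still holds by Proposition~\ref{condtech} since type $B$ is not in its list).
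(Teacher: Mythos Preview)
Your proof is correct and follows essentially the same approach as the paper: verify via Theorem~\ref{theointro1} and Proposition~\ref{condtech} that $\DZ_{A_S}(A_X)=Z(A_S)\cdot A_X$, then apply Theorem~\ref{thm61} with the generating set $\Upsilon(X)$ from Theorem~\ref{thm60}. The paper's proof is a one-line invocation of the same ingredients; your version simply spells out the case analysis in more detail.
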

\begin{proof} By Theorem~\ref{theointro1} and Proposition~\ref{condtech} we have~$\DZ_G(A_X))=Z(A_S)\times A_X$.  So we are in position to apply  Theorem~\ref{thm61}. 
\end{proof}
\section{The non spherical type cases}
We turn now to the proof of Theorem~\ref{theointro2} that is concerned with Artin-Tits groups that are not of spherical type. Our main argument is Proposition~\ref{thmlastsect}. Indeed, In~\cite{God4} the second author stated several conjectures, that are proved to hold for Artin-Tits groups of various types. Our proof is based on these conjectures.  
\begin{prp}\label{thmlastsect}
\begin{enumerate}
\item Let  $A_S$ be an Artin-Tits group. Assume $A_S$ has the property $(\circledast)$ stated in~\cite{God4}, then for any $X$  included in  $S$ one has 
\begin{enumerate} 
\item If~$A_X$ is of spherical type, then for any positive integer $k$, $$Z_{A_S}(\Delta^{2k}_X)= N_{A_S}(A_X).$$
\item if~$A_X$ is of spherical type and there is no $Y$ of spherical type and containing $X$, then  $$QZ_{A_S}(A_X)  =  QZ(A_X) \textrm{ and } N_{A_S}(A_X) = A_{X}$$
\item  If~$A_X$ is irreducible and not of spherical type, then $$QZ_{A_S}(A_X) = A_{X^\perp} \textrm{ and } N_{A_S}(A_X) = A_{X\cup X^\perp}$$
\end{enumerate}
\item\cite{God,God_pjm,God4} If $A_S$ is of spherical type, of FC type, of large type or of 2-dimensional type. Then, $A_S$ has the Property~$(\circledast)$. 
\end{enumerate}
\end{prp}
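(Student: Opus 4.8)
The plan is to deduce both parts from the conjectural machinery of~\cite{God4}. Part~(2) requires nothing beyond citation: the property~$(\circledast)$ is, by construction, the conjunction of the conjectures on normalizers, quasi-centralizers and ribbon elements formulated in~\cite{God4}, and these are established for spherical type in~\cite{God}, for FC type and $2$-dimensional type in~\cite{God_pjm,God4}; the large type case is subsumed by the $2$-dimensional one, since large type groups are $2$-dimensional. So one simply invokes the references. For Part~(1) I would first isolate the two structural outputs of~$(\circledast)$ that do all the work: the normalizer formula $N_{A_S}(A_X)=QZ_{A_S}(A_X)\cdot A_X$ (as in Theorem~\ref{thm57}, now available under~$(\circledast)$), and the ribbon calculus of~\cite{God4} controlling $QZ_{A_S}(A_X)$ — in particular, a positive elementary ribbon $d_{X,t}$ with $t\notin X$ is defined only when the component of $X\cup\{t\}$ containing $t$ is of spherical type.

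With this, (b) and (c) are essentially bookkeeping. In~(c), the inclusions $A_{X^\perp}\subseteq QZ_{A_S}(A_X)$ and $A_{X\cup X^\perp}\subseteq N_{A_S}(A_X)$ are immediate because every generator of $X^\perp$ commutes with every element of $X$; the reverse inclusion $QZ_{A_S}(A_X)\subseteq A_{X^\perp}$, when $A_X$ is irreducible and not of spherical type, is exactly what~$(\circledast)$ asserts (no ribbon can move such an $X$), and then $N_{A_S}(A_X)=QZ_{A_S}(A_X)\cdot A_X=A_{X^\perp}\cdot A_X=A_{X\cup X^\perp}$, the last step because $\Gamma_{X\cup X^\perp}$ splits as the disjoint union $\Gamma_X\sqcup\Gamma_{X^\perp}$, so $A_{X\cup X^\perp}=A_X\times A_{X^\perp}$. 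In~(b), $QZ(A_X)\subseteq QZ_{A_S}(A_X)$ is trivial; conversely, the hypothesis that no spherical type standard parabolic contains $X$ forces $X^\perp=\emptyset$ (an isolated vertex added to a spherical graph keeps it spherical) and forbids any spherical component of $X\cup\{t\}$ for $t\notin X$, so by the ribbon characterization no nontrivial positive ribbon escapes $A_X$, whence $QZ_{A_S}(A_X)=QZ_{A_X}(A_X)=QZ(A_X)$ and $N_{A_S}(A_X)=QZ(A_X)\cdot A_X=A_X$ since $QZ(A_X)\subseteq A_X$.

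The genuinely non-formal case is~(a). For $N_{A_S}(A_X)\subseteq Z_{A_S}(\D_X^{2k})$, take $g\in N_{A_S}(A_X)$ and write $g=h\,a$ with $h\in QZ_{A_S}(A_X)$ and $a\in A_X$ using the normalizer formula; conjugation by $h$ permutes $X$ and preserves the restricted Coxeter matrix, hence induces an automorphism of the Garside monoid $A_X^+$ permuting the atoms, hence fixing the Garside element $\D_X$, so $h^{-1}\D_X^{2k}h=\D_X^{2k}$; and since $\D_X^{2k}\in Z(A_X)$ while $a\in A_X$, also $a^{-1}\D_X^{2k}a=\D_X^{2k}$, so $g\in Z_{A_S}(\D_X^{2k})$. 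For the reverse inclusion $Z_{A_S}(\D_X^{2k})\subseteq N_{A_S}(A_X)$, I would invoke (or re-derive from) the~\cite{God4} package the fact that the parabolic closure $\mathrm{Pc}(\cdot)$ of a periodic element is a well-defined parabolic subgroup, and check that $\mathrm{Pc}(\D_X^{2k})=A_X$: it is contained in every standard parabolic $A_Y$ with $\D_X^{2k}\in A_Y$, and such a $Y$ must contain $Supp(\D_X^{2k})=X$, with a component-by-component reduction when $A_X$ is reducible. Then for $g\in Z_{A_S}(\D_X^{2k})$, equivariance of the parabolic closure gives $g^{-1}A_Xg=g^{-1}\mathrm{Pc}(\D_X^{2k})g=\mathrm{Pc}(g^{-1}\D_X^{2k}g)=\mathrm{Pc}(\D_X^{2k})=A_X$, i.e. $g\in N_{A_S}(A_X)$.

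The main obstacle is precisely this last inclusion in~(a): it is where one uses the full strength of the conjectural package of~\cite{God4} (control of parabolic closures of periodic elements) rather than merely unwinding~$(\circledast)$, and one must verify carefully that $\D_X^{2k}$ pins down $A_X$ itself — not a smaller, a larger, or a conjugate parabolic — which is exactly where the irreducible/reducible split and the computation of supports intervene. Everything else reduces, as sketched, to~$(\circledast)$ together with elementary facts about commuting generators and the automorphism-rigidity of Garside elements.
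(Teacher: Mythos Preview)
Your treatment of Part~(2) and of Parts~(1)(b),(c) is essentially the paper's: same normalizer formula, same ribbon calculus, same bookkeeping. The easy inclusion in~(1)(a), $N_{A_S}(A_X)\subseteq Z_{A_S}(\D_X^{2k})$, is also argued the same way (elements of $QZ_{A_S}(A_X)$ permute $X$ and hence fix $\D_X$; elements of $A_X$ commute with the central $\D_X^{2k}$).

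The one real difference is the reverse inclusion in~(1)(a). You treat it as ``the genuinely non-formal case'' and route through a parabolic-closure argument: show $\mathrm{Pc}(\D_X^{2k})=A_X$, then use conjugation-equivariance of $\mathrm{Pc}$. The paper is much more direct: from $g\in Z_{A_S}(\D_X^{2k})$ one has $g^{-1}\D_X^{2k}g=\D_X^{2k}\in A_X$, and property~$(\circledast)$ \emph{itself} already asserts that any $g$ with $g^{-1}\D_X^{2k}g\in A_X$ lies in $A_X\cdot QZ_{A_S}(A_X)=N_{A_S}(A_X)$. In other words, the implication you are trying to rebuild via parabolic closures is precisely one of the clauses packaged into~$(\circledast)$ in~\cite{God4}; no extra machinery (support computations, equivariance of $\mathrm{Pc}$, reducible/irreducible split) is needed. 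Your detour is not wrong, but it reintroduces as an ``obstacle'' something that the hypothesis hands you for free, and it relies on equivariance of parabolic closure, which is not obviously lighter than~$(\circledast)$ and would itself need justification in this generality.
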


\begin{proof} (i) Conjecture~$(\circledast)$ implies that $N_{A_S}(A_X) = A_X\cdot QZ_{A_S}(A_X)$ and that $QZ_{A_S}(A_X)$ is the subgroup of $A_S$ generated by the set of positive $X$-ribbons-$X$ (see~\cite{God4}). If~$A_X$ is irreducible and not of spherical type, then the set of elementary positive $X$-ribbons is equal to $X^\perp$. Moreover all the elements of  $X^\perp$ are $X$-ribbons-$X$. So  $QZ_{A_S}(A_X) = A_{X^\perp}$ and Point~(c) holds.  Assume $A_X$ is of spherical type.  Fix a positive integer $k$. If $g$ lies in $Z_{A_S}(\Delta^{2k}_X)$, then in particular $g^{-1}\Delta^{2k}_Xg$  belongs to $A_X$.  Property~$(\circledast)$ imposes that that $g$ belongs to $A_X\cdot QZ_{A_S}(A_X)$, that is to $N_{A_S}(A_X)$. Conversely,  $A_X\cdot QZ_{A_S}(A_X)$ is included in $Z_{A_S}(\Delta^{2k}_X)$ because both $A_X$ and $QZ_{A_S}(A_X)$ have to fix the center of $A_X$, which contains $\Delta^2$. So Point~(a) holds. Finally, if there is no $Y$ of spherical type and containing $X$, then the elementary positive ribbons $d_{X,t}$ are the elements~$\Delta_{X(t)}$ with $t$ in $X$ (see Definition~\ref{defribb}). It follows that $QZ_{A_S}(A_X)$ is included in $A_X$ and is, therefore, equal to  $QZ(A_X)$. Since $N_{A_S}(A_X) = A_X\cdot QZ_{A_S}(A_X)$, we deduce that $N_{A_S}(A_X) = A_X$. Hence Point~(b) holds.
\end{proof}
In the sequel we first extend Conjecture~\ref{conjintro} to the context of non irreducible parabolic subgroup  (see Conjecture~\ref{conjintrogener}). Then we prove that Conjecture~\ref{conjintrogener} holds for  any Artin-Tits which possesses the property $(\circledast)$  (see Theorem~\ref{propfin2}).  Considering 
Proposition~\ref{thmlastsect} (ii), this will prove  Theorem~\ref{theointro2}.
\begin{conj}\label{conjintrogener}
Let  $A_S$ be an irreducible Artin-Tits group and  $X$ be included in  $S$.  Let $X_s$ be the union of the irreducible components of $X$ that are of spherical type, and $X_{as}$ be the union of the other irreducible components of $X$. 
Then, $$\DZ_{A_S}(A_X) = Z_{A_S}(Z_{A_{X^\perp_{as}}}(A_{X_s}))$$  
\begin{enumerate}
\item Assume $X_s$ is empty. Then $$\DZ_{A_S}(A_X)= Z_{A_S}(A_{X^\perp}).$$
\item Assume $A_X$ is of spherical type. 
Let $A_T$ be the smallest standard parabolic subgroup of $A_S$ that contains $Z_{A_S}(A_{X})$.  
\begin{enumerate} 
\item If $T$ is of spherical type then  $$\DZ_{A_S}(A_X) = \DZ_{A_T}(A_{X}).$$
\item If $T$ is not of spherical type then  $$\DZ_{A_S}(A_X)=A_{X}.$$
\end{enumerate}
\end{enumerate}
\end{conj}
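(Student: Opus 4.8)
The plan is to derive everything from two structural facts about Artin--Tits groups with property $(\circledast)$, both isolated in Proposition~\ref{thmlastsect}: for $A_Y$ of spherical type, $N_{A_S}(A_Y)=A_Y\cdot QZ_{A_S}(A_Y)=Z_{A_S}(\Delta_Y^{2})$; and for $A_Y$ irreducible and not of spherical type, $QZ_{A_S}(A_Y)=A_{Y^\perp}$, whence also $Z_{A_S}(A_Y)=A_{Y^\perp}$ since $A_{Y^\perp}$ already centralizes $A_Y$. I use three elementary remarks: distinct irreducible components of $X$ are pairwise non-adjacent in $\Gamma_S$, so $A_X=A_{X_s}\times A_{X_{as}}$ and $X_s\subseteq X^\perp_{as}$; for a subgroup $H\subseteq A_T$ one has $Z_{A_T}(H)=Z_{A_S}(H)\cap A_T$; and $A_X\subseteq\DZ_{A_S}(A_X)$.

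Step~1 yields the displayed formula and case~(i). For each irreducible non-spherical component $U$ of $X_{as}$, Proposition~\ref{thmlastsect}(i)(c) gives $Z_{A_S}(A_U)=A_{U^\perp}$; intersecting over the components of $X_{as}$ and using $A_V\cap A_W=A_{V\cap W}$ gives $Z_{A_S}(A_{X_{as}})=A_{X^\perp_{as}}$. Since $A_{X_s}\subseteq A_{X^\perp_{as}}$, we get $Z_{A_S}(A_X)=Z_{A_S}(A_{X_s})\cap A_{X^\perp_{as}}=Z_{A_{X^\perp_{as}}}(A_{X_s})$, and applying $Z_{A_S}$ once more gives $\DZ_{A_S}(A_X)=Z_{A_S}(Z_{A_{X^\perp_{as}}}(A_{X_s}))$; specializing to $X_s=\emptyset$ this reads $\DZ_{A_S}(A_X)=Z_{A_S}(A_{X^\perp})$, which is case~(i).

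Step~2 treats case~(ii), where $X=X_s$ and $A_X$ is of spherical type. Put $C=Z_{A_S}(A_X)$ and let $A_T$ be its parabolic closure. From $\Delta_X^2\in Z(A_X)\subseteq C$ and Proposition~\ref{thmlastsect}(i)(a) we get $\DZ_{A_S}(A_X)=Z_{A_S}(C)\subseteq Z_{A_S}(\Delta_X^2)=A_X\cdot QZ_{A_S}(A_X)$; since $A_X\subseteq\DZ_{A_S}(A_X)$, this gives $\DZ_{A_S}(A_X)=A_X\cdot\big(QZ_{A_S}(A_X)\cap Z_{A_S}(C)\big)$, so the whole problem reduces to the key claim that every positive ribbon in $QZ_{A_S}(A_X)$ centralizing $C$ already lies in $A_T$. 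Granting this, $\DZ_{A_S}(A_X)\subseteq A_T$, and since $Z_{A_T}(A_X)=C$ we get $\DZ_{A_S}(A_X)=Z_{A_S}(C)\cap A_T=Z_{A_T}(C)=\DZ_{A_T}(A_X)$: this is (ii)(a) when $A_T$ is of spherical type, while when $A_T$ is not of spherical type the identity $\DZ_{A_T}(A_X)=A_X$---the conjecture for $A_T$ in place of $A_S$, which again has property $(\circledast)$---follows by induction on $|S|$ when $T\subsetneq S$, the residual case $T=S$ (where $A_S$ is itself not of spherical type and $C$ has full support) being settled directly by the ribbon analysis below; either way one obtains (ii)(b).

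For the key claim, write $xw\in\DZ_{A_S}(A_X)$ with $x\in A_X$ and $w\in QZ_{A_S}(A_X)$; then $w\in\DZ_{A_S}(A_X)$ as well, so $w$ centralizes $C$, in particular $X^\perp$, $Z(A_X)$ and $Z(A_T)$. Decomposing $w$ into elementary ribbons $d_{Y,t}$ (Proposition~\ref{lm58}) and controlling their supports with Lemma~\ref{lm59}, a factor introducing a vertex $t\notin T$ would produce a spherical parabolic $Z$ containing $t$ with $\Delta_Z^\varepsilon\in C$ for some $\varepsilon\in\{1,2\}$, forcing $Z=Supp(\Delta_Z^\varepsilon)\subseteq T$ by minimality of $T$---a contradiction; and $w$ centralizing $Z(A_X)$ forces the conjugation permutation of $X$ it induces to fix each irreducible component of $X$. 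Together with $w$ centralizing $X^\perp$, these constraints confine $w$ to $A_T$. The main obstacle is exactly this confinement: because $T$ may properly contain spherical parabolics strictly larger than $X$, one cannot invoke Proposition~\ref{thmlastsect}(i)(b), and the argument must use the explicit description of $Z_{A_S}(A_X)$ and of $T$ furnished by the conjectures of \cite{God4} that property $(\circledast)$ packages, together with Lemmas~\ref{lm59} and~\ref{lm512} on the supports and left orthogonal splittings of positive ribbons.
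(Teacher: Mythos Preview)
Your Step~1 is correct and is exactly the content of the paper's Lemma~\ref{lemfin1}: one computes $Z_{A_S}(A_{X_{as}})=A_{X_{as}^\perp}$ component by component via Proposition~\ref{thmlastsect}(i)(c), then intersects with $Z_{A_S}(A_{X_s})$.

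Step~2, however, has a real gap in case~(ii)(b), and the ``key claim'' in (ii)(a) is not established either. Two concrete problems. First, your induction on $|S|$ does not reduce anything: if $T\subsetneq S$, then inside $A_T$ one has $Z_{A_T}(A_X)=C\cap A_T=C$ (because $C\subseteq A_T$ by definition of $T$), so the parabolic closure of $Z_{A_T}(A_X)$ is again $T$, and you land in the ``residual case $T=S$'' of the smaller group. You then say this residual case is handled by the ribbon analysis, but when $T=S$ your key claim ``$w\in A_T$'' is vacuous and gives no information towards $\DZ_{A_S}(A_X)=A_X$. Second, the ribbon analysis itself is not a proof: after the first elementary factor $d_{X,t}$ the decomposition continues with $d_{Y,t'}$ for the shifted $Y$, and $\Delta_{Y(t')}^2$ centralizes $A_Y$, not $A_X$; there is no mechanism forcing the successive supports to stay in $T$. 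You acknowledge this obstacle at the end but do not overcome it.

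The idea you are missing is the one the paper uses (Lemmas~\ref{lemfin2}--\ref{lemfin3} and the proof of Theorem~\ref{propfin2}): instead of aiming at $A_T$, pick $Y$ \emph{maximal} among the spherical parabolics containing $X$. Then $\Delta_Y^2\in C$, so every $g\in\DZ_{A_S}(A_X)$ centralizes $\Delta_Y^2$ and hence lies in $N_{A_S}(A_Y)$ by Proposition~\ref{thmlastsect}(i)(a); by maximality of $Y$ and Proposition~\ref{thmlastsect}(i)(b) one has $N_{A_S}(A_Y)=A_Y$. This single stroke gives $\DZ_{A_S}(A_X)\subseteq A_Y$, hence $\DZ_{A_S}(A_X)\subseteq\DZ_{A_Y}(A_X)$. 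In case~(ii)(a), $T$ itself is this maximal $Y$ and you are done. In case~(ii)(b), $A_Y$ is of spherical type, so Theorem~\ref{theointro1} gives $\DZ_{A_Y}(A_X)\subseteq A_X\times QZ(A_Y)$; it then suffices to show no nontrivial $\Delta_Y^k$ lies in $\DZ_{A_S}(A_X)$, which follows because $Y\neq T$ furnishes some $g\in C\setminus A_Y$, and $g\Delta_Y^{2k}=\Delta_Y^{2k}g$ would force $g\in N_{A_S}(A_Y)=A_Y$, a contradiction. The point is that Proposition~\ref{thmlastsect}(i)(b) \emph{is} available---just for the maximal spherical $Y$ rather than for $X$ or $T$.
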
 

\begin{prp} \label{propfin1} Let  $A_S$ be an irreducible Artin-Tits group and  $X$ be included in  $S$. Assume $A_S$ has the  property~$(\circledast)$ stated in~\cite{God4}. Conjecture~\ref{conjintrogener} implies Conjecture~\ref{conjintro}. 
\end{prp}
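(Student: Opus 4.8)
The plan is to show that, once $A_X$ is assumed irreducible, the conclusion of Conjecture~\ref{conjintrogener} applied to $X$ becomes, after a short case distinction, exactly the conclusion of Conjecture~\ref{conjintro}. First I would record the dichotomy coming from the irreducibility of $A_X$: either $A_X$ is of spherical type, in which case $X_s = X$ and $X_{as} = \emptyset$ in the notation of Conjecture~\ref{conjintrogener}; or $A_X$ is not of spherical type, in which case $X_s = \emptyset$ and $X_{as} = X$. I expect the whole argument to follow this split, with property~$(\circledast)$ needed only in the second case.

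In the spherical case there is essentially nothing to do beyond matching the statements. Since $X_s = X$, part~(ii) of Conjecture~\ref{conjintrogener} has precisely the hypothesis of part~(ii) of Conjecture~\ref{conjintro}, the auxiliary parabolic $A_T$ is defined by the same recipe (the smallest standard parabolic subgroup containing $Z_{A_S}(A_X)$) in both statements, and the two sub-cases, according as $A_T$ is or is not of spherical type, give verbatim $\DZ_{A_S}(A_X) = \DZ_{A_T}(A_X)$ and $\DZ_{A_S}(A_X) = A_X$ respectively. One may also check the leading formula of Conjecture~\ref{conjintrogener} is vacuously consistent here: $X_{as}^\perp = \emptyset^\perp = S$, so $Z_{A_S}(Z_{A_{X_{as}^\perp}}(A_{X_s})) = Z_{A_S}(Z_{A_S}(A_X)) = \DZ_{A_S}(A_X)$. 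No use of $(\circledast)$ is needed here.

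The substantive case is $A_X$ irreducible and not of spherical type. Then $X_s$ is empty, so Conjecture~\ref{conjintrogener}(i) gives $\DZ_{A_S}(A_X) = Z_{A_S}(A_{X^\perp})$, and the only remaining task is to identify the standard parabolic $A_{X^\perp}$ with the smallest standard parabolic $A_T$ containing $Z_{A_S}(A_X)$, for this is what turns the formula into the assertion $\DZ_{A_S}(A_X) = Z_{A_S}(A_T)$ of Conjecture~\ref{conjintro}(i). Since $A_S$ has property~$(\circledast)$ and $A_X$ is irreducible and not of spherical type, Proposition~\ref{thmlastsect}(i)(c) yields $QZ_{A_S}(A_X) = A_{X^\perp}$. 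On the other hand, every element of $X^\perp$ commutes with every element of $X$ by the very definition of $X^\perp$, so $A_{X^\perp} \subseteq Z_{A_S}(A_X)$; combining this with the standard inclusion $Z_{A_S}(A_X) \subseteq QZ_{A_S}(A_X)$ gives $Z_{A_S}(A_X) = A_{X^\perp}$. As $A_{X^\perp}$ is itself a standard parabolic subgroup, it is the smallest one containing $Z_{A_S}(A_X)$, i.e. $A_T = A_{X^\perp}$, and hence $\DZ_{A_S}(A_X) = Z_{A_S}(A_{X^\perp}) = Z_{A_S}(A_T)$, as required.

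Since the argument is a formal case distinction, I do not expect a serious obstacle; the single delicate point is the equality $A_T = A_{X^\perp}$ in the non-spherical case, which is exactly where property~$(\circledast)$ enters, through Proposition~\ref{thmlastsect}(i)(c), and which explains why the implication is stated only under that hypothesis. A minor bookkeeping check worth doing explicitly is that all degenerate sub-cases (for instance $X^\perp = \emptyset$, forcing $Z_{A_S}(A_X) = \{1\}$ and $\DZ_{A_S}(A_X) = A_S$, or $X = S$) are consistent on both sides, which they are.
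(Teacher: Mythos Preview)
Your proposal is correct and follows essentially the same argument as the paper: split according to whether $A_X$ is of spherical type or not, observe that the spherical case is a verbatim match, and in the non-spherical case use Proposition~\ref{thmlastsect}(i)(c) together with the trivial inclusion $A_{X^\perp}\subseteq Z_{A_S}(A_X)\subseteq QZ_{A_S}(A_X)$ to obtain $Z_{A_S}(A_X)=A_{X^\perp}$ and hence $T=X^\perp$. Your extra consistency checks on degenerate sub-cases are harmless additions but not needed for the proof.
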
  
\begin{proof}
Consider the notations of Conjecture~\ref{conjintro}. Assume $X$ is irreducible. If~$X$ is not of spherical type, then $X = X_{as}$ and $X_s$ is empty. By Proposition~\ref{thmlastsect},  $Z_{A_S}(A_X) \subseteq QZ_{A_S}(A_X) = A_{X^\perp}\subseteq Z_{A_S}(A_X)$. Therefore  $A_{X^\perp}= Z_{A_S}(A_X)$ and $T = X^\perp$.  Thus, Conjecture~\ref{conjintrogener}(i) implies Conjecture~\ref{conjintro}(i).  In the case~$X$ is of spherical type, there is nothing to prove. \end{proof}

\begin{thm} \label{propfin2}  Let  $A_S$ be an irreducible Artin-Tits group. If $A_S$ has the property $(\circledast)$ stated in~\cite{God4}, then Conjecture~\ref{conjintrogener} holds. 
\end{thm}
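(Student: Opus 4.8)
The plan is to reduce everything to the structure theorem for centralizers of parabolic subgroups provided by Property $(\circledast)$, as reformulated in Proposition~\ref{thmlastsect}, and then to argue case by case according to the decomposition $X = X_s \cup X_{as}$. First I would record the general reduction: for any $u$ in $\DZ_{A_S}(A_X)$, the inclusion $Z_{A_X}(A_X) \subseteq Z_{A_S}(A_X)$ forces $u \in Z_{A_S}(Z_{A_X}(A_X))$, and more usefully $u$ centralises the larger group $Z_{A_S}(A_X)$, which by Proposition~\ref{thmlastsect} is controlled by ribbons. The key separation is that the irreducible components of $X$ that are \emph{not} of spherical type contribute, via Proposition~\ref{thmlastsect}(i)(c), only their orthogonal complements: $Z_{A_S}(A_{X_{as}}) = QZ_{A_S}(A_{X_{as}}) = A_{X_{as}^\perp}$, so those components are ``rigid'' and any double centraliser must live in $Z_{A_S}(A_{X_{as}^\perp})$. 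Meanwhile the spherical components $X_s$ behave as in the spherical-type theory, but now relativised inside $A_{X_{as}^\perp}$, which is where the formula $\DZ_{A_S}(A_X) = Z_{A_S}\!\big(Z_{A_{X_{as}^\perp}}(A_{X_s})\big)$ comes from.

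The steps, in order, would be: (1) Prove the master identity $\DZ_{A_S}(A_X) = Z_{A_S}\!\big(Z_{A_{X_{as}^\perp}}(A_{X_s})\big)$. For ``$\supseteq$'' one checks that $Z_{A_{X_{as}^\perp}}(A_{X_s})$ contains every generator of $Z_{A_S}(A_X)$ furnished by the ribbon description (elements of $X^\perp$, and the spherical-type Garside-type generators $\Delta_Y, \Delta_Y\Delta_{Y'}$ living over $X_s$ inside $X_{as}^\perp$), using that $A_{X_{as}}$ commutes with $A_{X_{as}^\perp}$ and that $X_{as}$ is rigid; for ``$\subseteq$'' one shows $Z_{A_S}(A_X) \supseteq Z_{A_{X_{as}^\perp}}(A_{X_s})$ by the same generator bookkeeping, so taking centralisers reverses the inclusion. (2) Deduce case (i): if $X_s = \emptyset$ then $X = X_{as}$, so $X_{as}^\perp = X^\perp$ and the inner group is $A_{X^\perp}$ itself (it is its own parabolic), giving $\DZ_{A_S}(A_X) = Z_{A_S}(A_{X^\perp})$ directly. (3) Deduce case (ii), where $A_X$ is of spherical type, so $X = X_s$ and $X_{as}^\perp = S$; then the master identity is just $\DZ_{A_S}(A_X) = Z_{A_S}(Z_{A_S}(A_X))$, and one must push this into the smallest parabolic $A_T \supseteq Z_{A_S}(A_X)$. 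Here invoke Proposition~\ref{thmlastsect}(i)(a): for suitable $k$, $Z_{A_S}(\Delta_X^{2k}) = N_{A_S}(A_X)$, which together with Theorem~\ref{thm57} ($N_{A_S}(A_X) = QZ_{A_S}(A_X)\cdot A_X$) lets one locate $Z_{A_S}(A_X)$ precisely and identify $T$. (4) Split case (ii) on whether $T$ is of spherical type: if it is, localise — every element of $Z_{A_S}(A_X)$ and its centraliser already lies in $A_T$, so $\DZ_{A_S}(A_X) = \DZ_{A_T}(A_X)$ and Theorem~\ref{thm59} (or Theorem~\ref{theointro1}) computes the right-hand side; if $T$ is not of spherical type, then Proposition~\ref{thmlastsect}(i)(b) gives $QZ_{A_S}(A_X) = QZ(A_X)$ and $N_{A_S}(A_X) = A_X$, which pins $Z_{A_S}(A_X)$ inside $A_X$, and a further application of $(\circledast)$-ribbon analysis forces $\DZ_{A_S}(A_X) = A_X$.

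\textbf{Main obstacle.} I expect the real work to be Step~(1), the master identity, and within it the inclusion that requires understanding the full centraliser $Z_{A_S}(A_X)$ when $X$ is \emph{mixed} (both spherical and non-spherical components present). The subtlety is that a ribbon normalising $A_X$ need not respect the partition into $X_s$ and $X_{as}$ a priori; one must show, using Property~$(\circledast)$ and the classification of elementary ribbons in Definition~\ref{defribb}, that the non-spherical components are genuinely rigid (no ribbon can move them except by the commuting action of $X_{as}^\perp$), so that the centraliser splits as a centraliser computed inside $A_{X_{as}^\perp}$ of the spherical part. The second delicate point is Step~(3)–(4): correctly identifying $T$ as the support of (a canonical generating set of) $Z_{A_S}(A_X)$ and checking that ``smallest parabolic containing $Z_{A_S}(A_X)$'' is compatible with the localisation $\DZ_{A_S}(A_X) = \DZ_{A_T}(A_X)$ — this needs that $A_T$ is again an Artin-Tits group with Property~$(\circledast)$ and that centralising inside $A_S$ versus inside $A_T$ agree on elements supported in $T$, which follows from Proposition~\ref{ThVDL}(ii) but should be spelled out.
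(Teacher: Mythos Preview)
Your overall architecture matches the paper's, and Step~(2) is fine. But you have misjudged where the difficulty lies, and Step~(4) in the non-spherical case contains a genuine error.

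\textbf{Step (1) is easier than you think.} The master identity follows in one line once you prove the \emph{equality} $Z_{A_S}(A_X) = Z_{A_{X_{as}^\perp}}(A_{X_s})$ of the inner groups (the paper's Lemma~\ref{lemfin1}). No ribbon bookkeeping is needed: since $A_X = A_{X_s}\times A_{X_{as}}$, one has $Z_{A_S}(A_X) = Z_{A_S}(A_{X_s})\cap Z_{A_S}(A_{X_{as}})$, and Proposition~\ref{thmlastsect}(i)(c), applied to each irreducible component of $X_{as}$, gives $Z_{A_S}(A_{X_{as}}) = A_{X_{as}^\perp}$ directly. Your worry that a ribbon might mix the spherical and non-spherical components never arises, because the non-spherical side is handled by~(i)(c) before any ribbon analysis.

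\textbf{Step (4), case $T$ not of spherical type, is wrong as written.} You invoke Proposition~\ref{thmlastsect}(i)(b) for $X$ itself, but its hypothesis is that \emph{no} $Y$ of spherical type strictly contains $X$. This is much stronger than ``$T$ is not of spherical type'': there can be many spherical $Y\supsetneq X$ whose union $T$ is nevertheless non-spherical. (Indeed, if your conclusion $N_{A_S}(A_X)=A_X$ held, then $Z_{A_S}(A_X)=Z(A_X)\subseteq A_X$, forcing $T=X$, which \emph{is} spherical --- contradicting the case hypothesis.) The paper's fix is to pass to a \emph{maximal} $Y\in\Upsilon$ (Lemma~\ref{lemfin2}): then~(i)(b) legitimately applies to $Y$, giving $N_{A_S}(A_Y)=A_Y$. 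Combined with~(i)(a) this yields $\DZ_{A_S}(A_X)\subseteq A_Y$ (Lemma~\ref{lemfin3}), and Theorem~\ref{theointro1} inside the spherical group $A_Y$ gives $\DZ_{A_S}(A_X)\subseteq A_X\times QZ(A_Y)$. Finally, if some nontrivial $\Delta_Y^k$ lay in $\DZ_{A_S}(A_X)$, any $g\in Z_{A_S}(A_X)\setminus A_Y$ (which exists because $T\neq Y$) would commute with $\Delta_Y^{2k}$ and hence lie in $N_{A_S}(A_Y)=A_Y$, a contradiction. The missing idea is precisely this passage to a maximal spherical $Y\supseteq X$, rather than arguing with $X$ itself.

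\textbf{Step (4), case $T$ of spherical type.} Your claim that ``every element of its centraliser already lies in $A_T$'' is exactly what needs proof; the paper obtains it as Lemma~\ref{lemfin3} (again via~(i)(a) and~(i)(b) applied to the maximal element $T\in\Upsilon$). Once $\DZ_{A_S}(A_X)\subseteq A_T$ is established, the reverse inclusion $\DZ_{A_T}(A_X)\subseteq\DZ_{A_S}(A_X)$ follows from $Z_{A_T}(A_X)=Z_{A_S}(A_X)$ (definition of $T$), not from Proposition~\ref{ThVDL}(ii) as you suggest.
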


In order to prove Theorem~\ref{propfin2} We need some preliminary results. In the sequel,  we assume  $A_S$ is an irreducible Artin-Tits group that has the property $(\circledast)$ stated in~\cite{God4}. We fix a standard parabolic subgroup $A_X$ with $X\subseteq S$. By $X_s$ we denote the union of the irreducible components of $X$ that are of spherical type. By $X_{as}$ we denote the union of the other irreducible components of $X$.  By definition $X_s$ is included in $X_{as}^\perp$. We set $$\Upsilon = \{Y\subseteq S\mid X_s\subseteq Y;\textrm{  and }A_Y\textrm{ is of spherical type.}\}$$ Let $A_T$ be the smallest standard parabolic subgroup of $A_S$ that contains $Z_{A_{S}}(A_{X_s})$.

\begin{lm}\label{lemfin1} $Z_{A_S}(A_X) = Z_{A_{X_{as}^\perp}}(A_{X_s})$.
\end{lm}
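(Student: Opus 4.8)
The plan is to prove the two inclusions separately, using the decomposition of the centralizer that comes from $(\circledast)$. First I would unwind the definitions: since $X = X_s \sqcup X_{as}$ with $X_s \subseteq X_{as}^\perp$ (so every generator in $X_s$ commutes with every generator in $X_{as}$, by definition of $X_{as}^\perp$), any element of $A_S$ that centralizes $A_X$ in particular centralizes both $A_{X_s}$ and $A_{X_{as}}$; conversely an element centralizing both pieces centralizes $A_X$. So $Z_{A_S}(A_X) = Z_{A_S}(A_{X_s}) \cap Z_{A_S}(A_{X_{as}})$, and the whole problem reduces to showing this intersection equals $Z_{A_{X_{as}^\perp}}(A_{X_s})$.

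For the inclusion $Z_{A_{X_{as}^\perp}}(A_{X_s}) \subseteq Z_{A_S}(A_X)$: an element $g \in A_{X_{as}^\perp}$ commutes with every generator of $X_{as}$ (that is exactly what $X_{as}^\perp$ means, together with Proposition~\ref{ThVDL} identifying $A_{X_{as}^\perp}$ with the subgroup generated by $X_{as}^\perp$), hence lies in $Z_{A_S}(A_{X_{as}})$; if moreover $g \in Z_{A_{X_{as}^\perp}}(A_{X_s})$ then $g$ centralizes $A_{X_s}$, and since $A_{X_s} \subseteq A_{X_{as}^\perp}$ the centralizer taken inside $A_{X_{as}^\perp}$ coincides with $Z_{A_S}(A_{X_s}) \cap A_{X_{as}^\perp}$, so $g$ centralizes $A_{X_s}$ as an element of $A_S$. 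Combining, $g \in Z_{A_S}(A_X)$.

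The substantive inclusion is $Z_{A_S}(A_X) \subseteq Z_{A_{X_{as}^\perp}}(A_{X_s})$. Here I would use Proposition~\ref{thmlastsect}(i)(c): since $X_{as}$ is a disjoint union of irreducible components, none of spherical type, one gets $Z_{A_S}(A_{X_{as}}) \subseteq QZ_{A_S}(A_{X_{as}}) = A_{(X_{as})^\perp} = A_{X_{as}^\perp}$. (One should check the component-wise version: $X_{as}^\perp$ is the intersection over the components $C$ of $C^\perp$, and $QZ_{A_S}(A_{X_{as}})$ is the intersection of the $QZ_{A_S}(A_C)$; alternatively apply $(\circledast)$ directly to the reducible $X_{as}$, noting the set of elementary positive $X_{as}$-ribbons-$X_{as}$ is exactly $X_{as}^\perp$.) Therefore any $g \in Z_{A_S}(A_X)$ already lies in $A_{X_{as}^\perp}$, and it also centralizes $A_{X_s}$; so $g \in Z_{A_{X_{as}^\perp}}(A_{X_s})$ — again using that the centralizer of $A_{X_s}$ inside $A_{X_{as}^\perp}$ is $Z_{A_S}(A_{X_s}) \cap A_{X_{as}^\perp}$, which follows from Proposition~\ref{ThVDL}(i).

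The main obstacle is the bookkeeping in the reducible case of Proposition~\ref{thmlastsect}(i)(c), which as stated assumes $A_X$ (here playing the role of $A_{X_{as}}$) is irreducible; I expect I will need a short argument that $QZ_{A_S}(A_{X_{as}}) = A_{X_{as}^\perp}$ for a disjoint union of non-spherical irreducible components, either by iterating the irreducible statement over components or by re-examining the ribbon description of the quasi-centralizer under $(\circledast)$. Everything else is formal manipulation of centralizers and the parabolic-subgroup identifications of Proposition~\ref{ThVDL}.
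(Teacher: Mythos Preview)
Your proposal is correct and follows essentially the same route as the paper: decompose $Z_{A_S}(A_X) = Z_{A_S}(A_{X_s})\cap Z_{A_S}(A_{X_{as}})$, then use Proposition~\ref{thmlastsect}(i)(c) componentwise on the irreducible pieces of $X_{as}$ to obtain $Z_{A_S}(A_{X_{as}}) = \bigcap_i A_{X_i^\perp} = A_{X_{as}^\perp}$ (via Proposition~\ref{ThVDL}(ii)), which is exactly the bookkeeping you flagged. The paper phrases everything as equalities rather than doing the two inclusions separately, but the content is identical.
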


\begin{proof} We have $A_X = A_{X_s}\times A_{X_{as}}$. Therefore $Z_{A_S}(A_X) = Z_{A_S}(A_{X_s}) \cap Z_{A_S}(A_{X_{as}})$.  Let $X_1,\cdots, X_k$ be the irreducible components of $X_{\textcolor{red}{s}}$. Then $X_{as}^\perp = X^\perp_{1}\cap\cdots\cap X^\perp_{k}$. On the other hand, $A_{X_{as}} = A_{X_1}\times \cdots \times A_{X_k}$ and $Z_{A_S}(A_{X_{as}}) =  Z_{A_S}(A_{X_1})\cap\cdots \cap  Z_{A_S}(A_{X_k})$. By Proposition~\ref{thmlastsect}, $Z_{A_S}(A_{X_{i}}) = QZ_{A_S}(A_{X_{i}}) = A_{X^\perp_{i}}$ for each component $X_i$. Therefore 
 $Z_{A_S}(A_{X_{as}}) = A_{X^\perp_{1}}\cap\cdots\cap A_{X^\perp_{k}} = A_{X^\perp_{1}\cap\cdots\cap X^\perp_{k}} =A_{X_{as}^\perp}$.   But, $A_{X_s}$ is included in $ A_{X^\perp_{as}}$. Thus, $Z_{A_S}(A_{X_s}) \cap Z_{A_S}(A_{X_{as}}) = Z_{A_{X_{as}^\perp}}(A_{X_s})$.
\end{proof}

\begin{lm} \label{lemfin2} The set $\Upsilon$ is not empty and all its elements are contained in $T$. Moreover,~$T$ belongs to $\Upsilon$ if and only if $T$ is of spherical type. In this case, $T$ is the unique maximal element of $\Upsilon$.
\end{lm}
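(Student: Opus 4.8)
The plan is to analyze $\Upsilon = \{Y\subseteq S\mid X_s\subseteq Y \text{ and } A_Y \text{ is of spherical type}\}$ directly from the definition of $A_T$ as the smallest standard parabolic containing $Z_{A_S}(A_{X_s})$. First I would check non-emptiness: since each irreducible component of $X_s$ is of spherical type, $X_s$ itself is a disjoint union of Coxeter graphs of spherical type, hence $A_{X_s}$ is of spherical type, so $X_s\in\Upsilon$.

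Next I would show every $Y\in\Upsilon$ satisfies $Y\subseteq T$. The key point is that for $Y\in\Upsilon$, since $A_Y$ is of spherical type and $X_s\subseteq Y$, by Proposition~\ref{reststdAT}(ii)(b) the element $\D_Y^2$ lies in $Z(A_Y)$, and in particular $\D_Y^2$ commutes with everything in $A_{X_s}$ (as $A_{X_s}\subseteq A_Y$ and $\D_Y^2\in Z(A_Y)$). Hence $\D_Y^2\in Z_{A_S}(A_{X_s})\subseteq A_T$. Since $\mathrm{Supp}(\D_Y^2)=Y$ and the support of any element of $A_T$ is contained in $T$, we get $Y\subseteq T$. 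Therefore all elements of $\Upsilon$ are contained in $T$.

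Then I would prove the equivalence: $T\in\Upsilon$ iff $A_T$ is of spherical type. If $T\in\Upsilon$ then by definition of $\Upsilon$, $A_T$ is of spherical type (and $X_s\subseteq T$ holds since $X_s\subseteq Y$ for every $Y\in\Upsilon$, or directly since $A_{X_s}\subseteq Z_{A_S}(A_{X_s})$... more carefully, $A_{X_s}\subseteq Z_{A_S}(A_{X_s})$ because $A_{X_s}$ is of spherical type only gives centrality of $\D_{X_s}^2$, but in fact each generator of $X_s$ lies in $Z_{A_S}(A_{X_s})$? No — rather $X_s\subseteq T$ follows because $\Upsilon\ni X_s$ and we just showed $X_s\subseteq T$). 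Conversely, suppose $A_T$ is of spherical type; then since $X_s\subseteq T$, the pair $(T,X_s)$ satisfies the membership condition of $\Upsilon$, so $T\in\Upsilon$. Finally, when $T\in\Upsilon$, it is the unique maximal element: it contains every $Y\in\Upsilon$ by the second step, and it is itself in $\Upsilon$, so it is the greatest element hence the unique maximal one.

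The main obstacle I anticipate is being careful about the inclusion $X_s\subseteq T$ and the precise consequences of Property~$(\circledast)$ and Proposition~\ref{reststdAT} — specifically, justifying that $\D_Y^2\in Z_{A_S}(A_{X_s})$ cleanly and that support behaves as claimed under the canonical inclusion $A_T\hookrightarrow A_S$ (Proposition~\ref{ThVDL}). One should also confirm that $X_s\subseteq T$ does not require $(\circledast)$: it follows because $\D_{X_s}^2\in Z(A_{X_s})\subseteq Z_{A_S}(A_{X_s})\subseteq A_T$ and $\mathrm{Supp}(\D_{X_s}^2)=X_s$. With that in hand the rest is a short support/maximality argument.
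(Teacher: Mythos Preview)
Your proposal is correct and follows essentially the same route as the paper: show $X_s\in\Upsilon$ for non-emptiness, use $\Delta_Y^2\in Z(A_Y)\subseteq Z_{A_S}(A_{X_s})\subseteq A_T$ together with $\mathrm{Supp}(\Delta_Y^2)=Y$ to get $Y\subseteq T$, and then read off the equivalence and maximality. Your version is slightly more explicit about the support argument and the inclusion $X_s\subseteq T$, but the underlying idea is identical.
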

\begin{proof} $X_s$ is contained in $\Upsilon$, so the latter is not empty. Moreover, $X_s$ is included in $T$. Therefore the latter  belong to $\Upsilon$ if and only if it is of spherical type. Finally if $Y$ belongs  to $\Upsilon$, then $\Delta_Y^2$ belongs to $Z_{A_S}(A_Y)$, and therefore to $Z_{A_S}(A_X)$.  Thus, $Y$ is included in $T$. Hence, if $T$ belongs to $\Upsilon$, it is its unique maximal element.  
\end{proof}

\begin{lm}\label{lemfin3}  Assume $Y$ is maximal in $\Upsilon$ for the inclusion. Then, $$\DZ_{A_S}(A_{X_s}) \subseteq \DZ_{A_Y}(A_{X_s})$$
\end{lm}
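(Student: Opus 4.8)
The plan is to take an element $g \in \DZ_{A_S}(A_{X_s})$ and show it already lies inside $A_Y$, and in fact commutes with $Z_{A_Y}(A_{X_s})$, so that it belongs to $\DZ_{A_Y}(A_{X_s})$. The key observation is that $\Delta_Y^2$ lies in $Z(A_Y) \subseteq Z_{A_Y}(A_{X_s}) \subseteq Z_{A_S}(A_{X_s})$, since $Y \supseteq X_s$ and $A_Y$ is of spherical type (here $Y \in \Upsilon$ by hypothesis). Hence every $g \in \DZ_{A_S}(A_{X_s})$ must commute with $\Delta_Y^2$, i.e. $g \in Z_{A_S}(\Delta_Y^{2})$. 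By Proposition~\ref{thmlastsect}(i)(a) applied with $X$ replaced by $Y$ — legitimate because $A_Y$ is of spherical type and $A_S$ has property $(\circledast)$ — this gives $g \in N_{A_S}(A_Y) = A_Y \cdot QZ_{A_S}(A_Y)$.

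Next I would pin down $QZ_{A_S}(A_Y)$, or at least enough of it. Write $g = h q$ with $h \in A_Y$ and $q \in QZ_{A_S}(A_Y)$. The element $q$ is a product of positive $Y$-ribbons-$Y$ (by the description of $QZ_{A_S}(A_Y)$ coming from $(\circledast)$, cited in the proof of Proposition~\ref{thmlastsect}). I want to argue that $q$ centralizes $A_{X_s}$ as well, using the maximality of $Y$ in $\Upsilon$: conjugation by a ribbon sends $A_Y$ to some $A_{Y'}$ with $\Gamma_{Y'} \cong \Gamma_Y$, so $Y'$ is again of spherical type; and since the ribbon fixes $X_s$ pointwise when $q \in Z_{A_S}(A_{X_s})$, one gets $X_s \subseteq Y' \cap Y$. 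The maximality of $Y$ should then force $Y' = Y$ (two distinct maximal spherical-type sets both containing $X_s$ cannot be linked by such a ribbon while the ribbon centralizes $X_s$), and a ribbon-$Y$-$Y$ that centralizes $X_s \subseteq Y$ can be analyzed inside $A_Y$ itself; in particular $q \in A_Y$, whence $g = hq \in A_Y$. Once $g \in A_Y$ we have $g \in A_Y \cap \DZ_{A_S}(A_{X_s})$, and since $Z_{A_Y}(A_{X_s}) \subseteq Z_{A_S}(A_{X_s})$ every element of the bigger double centralizer commutes with everything in $Z_{A_Y}(A_{X_s})$, so $g \in \DZ_{A_Y}(A_{X_s})$, as desired.

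The main obstacle I expect is the step controlling the quasi-centralizer part $q$: showing that a positive $Y$-ribbon-$Y$ which additionally centralizes $A_{X_s}$ must lie in $A_Y$ (equivalently, must be a ribbon that does not ``escape'' $Y$). This is where the maximality of $Y$ in $\Upsilon$ has to be used in an essential way, together with the structure theory of ribbons from Section~\ref{sectribbon} (Proposition~\ref{lm58}, and the fact that conjugation by a positive ribbon-$Y$-$Y$ induces an isomorphism $\Gamma_Y \cong \Gamma_{Y'}$). If a clean direct argument is awkward, an alternative is to instead establish the reverse-type containment by noting $\DZ_{A_S}(A_{X_s}) \subseteq Z_{A_S}(\Delta_Y^{2k})$ for all $k$ and then intersect over a cofinal family of $Y \in \Upsilon$, but since $Y$ is assumed maximal a single $Y$ should suffice.

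Everything else — the identifications $Z_{A_Y}(A_{X_s}) \subseteq Z_{A_S}(A_{X_s})$, the chain $Z(A_Y) \subseteq Z_{A_Y}(A_{X_s})$, and the final deduction that membership in $A_Y$ plus centralizing a subgroup of $Z_{A_S}(A_{X_s})$ yields membership in $\DZ_{A_Y}(A_{X_s})$ — is routine and formal once the ribbon step is in place.
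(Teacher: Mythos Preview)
Your opening moves are exactly the paper's: pick $g\in \DZ_{A_S}(A_{X_s})$, note $\Delta_Y^2\in Z_{A_S}(A_{X_s})$, and invoke Proposition~\ref{thmlastsect}(i)(a) to get $g\in N_{A_S}(A_Y)$. The final chain
\[
\DZ_{A_S}(A_{X_s})\subseteq A_Y\ \Rightarrow\ \DZ_{A_S}(A_{X_s})=Z_{A_Y}(Z_{A_S}(A_{X_s}))\subseteq Z_{A_Y}(Z_{A_Y}(A_{X_s}))=\DZ_{A_Y}(A_{X_s})
\]
is also the paper's. So the architecture is right.

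The place where you diverge is exactly the step you flag as the ``main obstacle'': you decompose $g=hq$ with $q\in QZ_{A_S}(A_Y)$ and try to force $q\in A_Y$ by a ribbon argument tied to $X_s$. This detour is both unnecessary and, as written, has gaps. You have no reason to know that $q$ centralizes $A_{X_s}$ (you only know $g$ centralizes $Z_{A_S}(A_{X_s})$, not $A_{X_s}$, and the factorization $g=hq$ need not respect that); and $q\in QZ_{A_S}(A_Y)$ already means $qYq^{-1}=Y$, so there is no nontrivial $Y'$ to compare with. The paper bypasses all of this with a single citation you overlooked: since $Y$ is maximal in $\Upsilon$, any spherical-type $Y'\supsetneq Y$ would again contain $X_s$ and hence lie in $\Upsilon$, contradicting maximality; so $Y$ is maximal among \emph{all} spherical-type subsets of $S$, and Proposition~\ref{thmlastsect}(i)(b) gives $N_{A_S}(A_Y)=A_Y$ outright. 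That one line replaces your entire ribbon discussion and closes the proof.
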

\begin{proof} Assume $g$ belongs to $\DZ_{A_S}(A_{X_s})$. The element $\Delta_Y^2$  lies in $Z(A_Y)$. Since $X_s$ is included in~$Y$, it follows that $\Delta_Y^2$ lies in $Z_{A_S}(A_{X_s})$, and $g\Delta_Y^2g^{-1} = \Delta_Y^2$. By Proposition~\ref{thmlastsect}(i)(a),  $g$ belongs to the subgroup $N_{A_S}(A_Y)$. But $Y$ is maximal in $\Upsilon$. By Proposition~\ref{thmlastsect}(i)(b), $N_{A_S}(A_Y) = A_Y$. Thus $DZ_{A_S}(A_{X_s}) = Z_{A_S}(Z_{A_S}(A_{X_s}))\cap A_Y =  Z_{A_Y}(Z_{A_S}(A_{X_s})) \subseteq Z_{A_Y}(Z_{A_Y}(A_{X_s})) = \DZ_{A_Y}(A_{X_s})$.
\end{proof}

We can now prove Theorem~\ref{propfin2} 
\begin{proof}[Proof of Theorem~\ref{propfin2}.]
By Lemma~\ref{lemfin1}, we have $Z_{A_S}(A_X) = Z_{A_{X_{as}^\perp}}(A_{X_s})$. It follows that~$\DZ_{A_S}(A_X) = Z_{A_S}(Z_{A_{X_{as}^\perp}}(A_{X_s}))$. When $X_s$, is empty, we have $X_{as} = X$, $A_{X_s} = \{1\}$. So  $Z_{A_{X_{as}^\perp}}(A_{X_s}) = Z_{A_{X^\perp}}(\{1\}) = A_{X^\perp}$. Therefore, ~$\DZ_{A_S}(A_X) = Z_{A_S}(A_{X^\perp})$. Assume for the remaining of the proof that  $X$ is of spherical type. Assume, First, that $T$ is of spherical type. By Lemma~\ref{lemfin2}, $T$ is maximal in $\Upsilon$ and, by Lemma~\ref{lemfin3},  $\DZ_{A_S}(A_X) \subseteq \DZ_{A_T}(A_{X})$.  On the other hand,  $Z_{A_T}(A_{X}) = Z_{A_S}(A_{X})\cap A_T = Z_{A_S}(A_X)$. We deduce that $\DZ_{A_T}(A_{X}) = Z_{A_T}(Z_{A_S}(A_X)) \subseteq \DZ_{A_S}(A_X)$.  Hence,  $\DZ_{A_S}(A_X) = \DZ_{A_T}(A_{X})$. Assume, finally, that $T$ do not lie in~$\Upsilon$. Let $Y$ be maximal in $\Upsilon$.  By Lemma~\ref{lemfin3},  we get $\DZ_{A_S}(A_X) \subseteq \DZ_{A_Y}(A_{X})$. If $Y = X$, then  $A_{X} \subseteq \DZ_{A_S}(A_{X}) \subseteq \DZ_{A_{X}}(A_{X}) = A_{X}$ and we are done. So, assume $X\subsetneq Y$. The group $A_Y$ is of spherical type. Applying Theorem~\ref{theointro1}, we get that  $\DZ_{A_Y}(A_{X}) \subseteq QZ(A_Y)\times A_{X}$.  Since $A_{X}$ is included in $\DZ_{A_S}(A_{X})$, the group~$A_{X}$ is equal to $\DZ_{A_S}(A_{X})$ if and only if $\DZ_{A_S}(A_{X})\cap QZ(A_Y) = \{1\}$. Assume this is not the case. Then, there exists $k > 0$  so that $\Delta^k_Y$ lies in $\DZ_{A_S}(A_{X})$. We can assume without restriction that $k$ is even. Since $Y$ lies in $\Upsilon$ and $T$ does not, they are distinct. It follows from the definition of $T$ that there exists $g$ in $Z_{A_S}(A_{X})$ which is not in $A_Y$. But  $\Delta^k_Y$ lies in $\DZ_{A_S}(A_{X})$. So  we have $\Delta^k_Yg (\Delta^k_Y)^{-1} = g$, and equivalently $g \Delta^k_Yg^{-1} = \Delta^k_Y$.  The latter equality imposes that $g$ belongs to
$N_{A_S}(A_Y)$ by Proposition~\ref{thmlastsect}(i)(a). But $N_{A_S}(A_Y) = A_Y$ by Proposition~\ref{thmlastsect}(i)(b), a contradiction. Hence, $\DZ_{A_S}(A_{X}) = A_{X}$. 
\end{proof}

\begin{cor} \label{corsecfinale} Let  $A_S$ be an irreducible Artin-Tits group of FC type, or of large type, or of $2$-dimensional type. Then Conjecture~\ref{conjintrogener} holds. 
\end{cor}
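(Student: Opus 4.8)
The plan is to derive this corollary as an immediate consequence of the two results that precede it, namely Theorem~\ref{propfin2} and Proposition~\ref{thmlastsect}(ii). The statement of Theorem~\ref{propfin2} says that Conjecture~\ref{conjintrogener} holds for any irreducible Artin-Tits group that has the property~$(\circledast)$ of~\cite{God4}. Proposition~\ref{thmlastsect}(ii) asserts precisely that the property~$(\circledast)$ is satisfied by Artin-Tits groups of spherical type, FC type, large type, and $2$-dimensional type. So the whole argument is a syllogism: take an irreducible Artin-Tits group $A_S$ of one of the three listed types, invoke Proposition~\ref{thmlastsect}(ii) to conclude that $A_S$ has property~$(\circledast)$, and then apply Theorem~\ref{propfin2} to obtain Conjecture~\ref{conjintrogener} for $A_S$.

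Concretely, first I would fix $A_S$ irreducible of FC type, or of large type, or of $2$-dimensional type. Then I would point out that each of these three families appears in the hypothesis list of Proposition~\ref{thmlastsect}(ii) (large type being contained in $2$-dimensional type, but it is listed explicitly anyway), so in all three cases $A_S$ possesses the property~$(\circledast)$ stated in~\cite{God4}. Finally I would invoke Theorem~\ref{propfin2}, whose hypothesis is exactly ``$A_S$ irreducible with property~$(\circledast)$'', to conclude that Conjecture~\ref{conjintrogener} holds for $A_S$. Since $A_S$ was an arbitrary irreducible group of one of these types, the corollary follows.

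There is essentially no obstacle here: the corollary is a packaging statement that collects the output of Theorem~\ref{propfin2} for the concrete families of interest, and all the mathematical content has already been established. The only thing worth being careful about is making sure the three named families genuinely fall under the scope of Proposition~\ref{thmlastsect}(ii) — they do, since that proposition's part~(ii) explicitly lists ``spherical type, FC type, large type or $2$-dimensional type''. One could also remark, as the surrounding text does, that combining this with Proposition~\ref{propfin1} recovers Theorem~\ref{theointro2}, but that is not needed for the proof of the corollary itself.

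\begin{proof}
Let $A_S$ be an irreducible Artin-Tits group of FC type, of large type, or of $2$-dimensional type. By Proposition~\ref{thmlastsect}(ii), in each of these cases $A_S$ has the property $(\circledast)$ stated in~\cite{God4}. Hence $A_S$ is an irreducible Artin-Tits group with property $(\circledast)$, and Theorem~\ref{propfin2} applies, yielding that Conjecture~\ref{conjintrogener} holds for $A_S$.
\end{proof}
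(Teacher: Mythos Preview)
Your proposal is correct and matches the paper's approach exactly: the paper does not even write out a proof for this corollary, since it is an immediate consequence of Theorem~\ref{propfin2} together with Proposition~\ref{thmlastsect}(ii), precisely as you explain.
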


\begin{rmq} In an (irreducible) Artin-Tits group that is of large type, all standard parabolic subgroups are irreducible.  So, Corollary~\ref{corsecfinale} provides a complete description of the double centralizer of any standard parabolic subgroup. However, for the other not spherical types in the case both $X_s$  and $X_{as}$ are not empty, the answer is not completely satisfactory.  Indeed the double centralizer is not as simple as in the cases where either $X_s$ or $X_{as}$ is empty. For instance, in the following example, we have $Z_{A_S}(A_X) = Z(A_{X_s})$ and $DZ_{A_X} = N_{A_S}(A_{X_s}) = QZ_{A_S}(A_{X_s}) \cdot  A_{X_s}$
\end{rmq}
\begin{center}
\begin{figure}[!h]
\begin{tikzpicture}[scale=0.75]
\draw[very thick,fill=black] (3.5,-13.1) circle (.1cm);
\draw[very thick,fill=black] (5,-13.1) circle (.1cm);
\draw[very thick,fill=black] (5,-11.6) circle (.1cm);
\draw[very thick,fill=black] (6.5,-13.1) circle (.1cm);
\draw[very thick,fill=black] (8,-13.1) circle (.1cm);
\draw[very thick,fill=black] (9.5,-13.1) circle (.1cm);
\draw[very thick,fill=black] (11,-13.1) circle (.1cm);
\draw[very thick] (3.5,-13.1) -- +(7.5,0);
\draw[very thick] (3.5,-13.1) -- +(1.5,1.5);
\draw[very thick] (5,-11.6) -- +(0,-1.5);
\draw (3.5,-13.5) node {$s_2$};
\draw (5,-13.5) node {$s_3$};
\draw (6.5,-12.7) node {$s_4$};
\draw (8,-13.5) node {$s_5$};
\draw (9.5,-13.5) node {$s_6$};
\draw (11,-13.5) node {$s_7$};
\draw (5,-11.2) node {$s_1$};
\draw (11.8,-12.2) node {$X_s$};
\draw (9.5,-13.3) ellipse (2cm and .8cm);
\draw[-stealth] (11.4,-12.4) -- +(-.45,-.25);
\draw (2.2,-10.7) node {$X_{as}$};
\draw (4.25,-12.7) ellipse (1.25cm and 2.5cm);
\draw[-stealth] (2.7,-10.9) -- +(.45,-.25);

\end{tikzpicture}
\end{figure}
\end{center}

\end{document}